\newtheorem{theorem}{Theorem}[section]
\newtheorem*{main}{Main Theorem}
\newtheorem{lemma}[theorem]{Lemma}
\newtheorem{corollary}[theorem]{Corollary}
\newtheorem{proposition}[theorem]{Proposition}
\newtheorem{notation}[theorem]{Notation}
\newtheorem{fact}[theorem]{Fact}
\theoremstyle{definition}
\newtheorem{definition}[theorem]{Definition}
\newtheorem{remark}[theorem]{Remark}
\newtheorem{setup}[theorem]{Setup}
\def\Plus{\texttt{+}}
\def\Minus{\texttt{-}}
\def\A{{\mathbb A}}
\def\C{{\mathbb C}}
\def\P{{\mathbb P}}
\def\Q{{\mathbb Q}}
\def\cC{{\mathcal C}}
\def\cD{{\mathcal D}}
\def\cE{{\mathcal E}}
\def\cF{{\mathcal F}}
\def\cG{{\mathcal{G}}}
\def\cL{{\mathcal L}}
\def\cM{{\mathcal M}}
\def\cN{{\mathcal{N}}}
\def\cO{{\mathcal{O}}}
\def\cS{{\mathcal S}}
\def\cU{{\mathcal U}}
\def\cZ{{\mathcal Z}}
\def\Q{{\mathbb{Q}}}
\def\DA{\operatorname{\hspace{0.01cm} A}}
\def\DB{\operatorname{\hspace{0.01cm} B}}
\def\DC{\operatorname{\hspace{0.01cm} C}}
\def\DD{\operatorname{\hspace{0.01cm} D}}
\def\DE{\operatorname{\hspace{0.01cm} E}}
\def\DF{\operatorname{\hspace{0.01cm} F}}
\def\DG{\operatorname{\hspace{0.01cm} G}}
\def\fg{{\mathfrak g}}
\def\ff{{\mathfrak f}_4}
\def\GL{\operatorname{Gl}}
\def\el{w}
\def\Sc{X}
\def\lra{\longrightarrow}
\def\ra{\rightarrow}
\def\lra{\longrightarrow}
\def\operatorname#1{\mathop{\rm #1}\nolimits}
\def\Aut{\operatorname{Aut}}
\def\Ext{\operatorname{Ext}}
\def\Hom{\operatorname{Hom}}
\def\Pic{\operatorname{Pic}}
\def\id{\operatorname{id}}
\def\NE{{\operatorname{NE}}}
\newcommand{\cNE}[1]{\overline{\NE}(#1)}
\newcommand{\ol}[1]{\overline{#1}}
\newcommand{\pb}{\ar@{}[dr]|(.25){\text{\pigpenfont J}}}
\newcommand*\wthelper[2]{%
        \hbox{\dimen@\accentfontxheight#1%
                \accentfontxheight#11.15\dimen@
                $\m@th#1\widetilde{#2}$%
                \accentfontxheight#1\dimen@
        }%
}
\newcommand*\accentfontxheight[1]{%
        \fontdimen5\ifx#1\displaystyle
                \textfont
        \else\ifx#1\textstyle
                \textfont
        \else\ifx#1\scriptstyle
                \scriptfont
        \else
                \scriptscriptfont
        \fi\fi\fi3
}
\newcommand{\shse}[3]{0 ~\ra ~#1~ \lra ~#2~ \lra ~#3~ \ra~ 0}
\begin{document}

\title[Deformation of Bott--Samelson varieties]{Deformation of Bott--Samelson varieties and variations of isotropy structures}

\author[Occhetta]{Gianluca Occhetta}
\address{Dipartimento di Matematica, Universit\`a di Trento, via
Sommarive 14 I-38123 Povo di Trento (TN), Italy} \thanks{First author supported by PRIN project ``Geometria delle variet\`a algebriche''. First and second author supported by the Department of Mathematics of the University of Trento.
Second author supported by the Polish National Science Center project 2013/08/A/ST1/00804.  }
\email{gianluca.occhetta@unitn.it, eduardo.solaconde@unitn.it}

\author[Sol\'a Conde]{Luis E. Sol\'a Conde}
\subjclass[2010]{Primary 14J45; Secondary 14E30, 14M15, 14M17}

\begin{abstract} 
In the framework of the problem of characterizing complete flag manifolds by their contractions, the complete flags of type $\DF_4$ and $\DG_2$ satisfy the property that any possible tower of Bott--Samelson varieties dominating them birationally deforms in a nontrivial moduli. In this paper we illustrate the fact that, at least in some cases, these deformations can be explained in terms of automorphisms of Schubert varieties, providing variations of certain isotropic structures on them. As a corollary, we provide a unified and completely algebraic proof of the characterization of complete flag manifolds in terms of their contractions.
\end{abstract}

\maketitle

\section{Introduction}\label{sec:intro}

Bott--Samelson varieties appear classically in the study of the singular cohomology of complete flag manifolds $G/B$ as desingularizations of Schubert varieties. Introduced in 1958 by Bott and Samelson (\cite{BS58}), they are usually defined as varieties of the form:
$$
(P_1\times^BP_2\times^B\dots\times^BP_r)/B, 
$$
where $P_1,P_2,\dots ,P_r$ are parabolic subgroups of $G$ containing the Borel subgroup $B$ (see \cite{Dem74}).
Besides their representation-theoretical definition, Bott--Samelson varieties are particularly interesting under the point of view of Mori theory since they may be constructed recursively as towers of $\P^1$-bundles. In this sense, by keeping track of their Mori cones, one may use them to reconstruct rational homogeneous manifolds, More concretely, this idea has been used in \cite{OSWW,OSW} to characterize complete flag manifolds by their $\P^1$-bundle structures; previously a similar result was known only in the case of Picard number two (\cite{kyoto,Wa2}). 

In the general case, one needs to show first that if a smooth complex projective variety $X$ has as many $\P^1$-bundle structures as its Picard number, then  the intersection matrix (see Setup \ref{set:FTvar}) of relative anticanonical bundles and fibers
$$
(-K_i\cdot\Gamma_j)
$$
is equal to the Cartan matrix of a semisimple Lie algebra, determining a complete flag manifold $G/B$. Moreover, from the existence of the $\P^1$-bundle structures one infers a Borel--Weyl--Bott type theorem for line bundles on $X$, which in turn allows   to control, via certain vanishing theorems, the contractions of some Bott--Samelson varieties.

In the second part of the proof one compares $X$ with $G/B$ by means of Bott--Samelson varieties starting from points. One starts by choosing a word $\el$, that is a reduced expression of the longest element in the Weyl group $W$ of $G$, that will determine the towers of Bott--Samelson varieties for $X$ and $G/B$. 

At the $j$-th step of their recursive construction, the corresponding Bott--Samelson variety is determined by (the homothety class of) an element $\theta_j$ of a cohomology group of type $H^1$. The existence of the contraction to $X$ implies that $\theta_j$ must be chosen to be nonzero, whenever the corresponding group $H^1$ is different from zero. In the case in which the Dynkin diagram of $G$ is simply laced, the cohomology groups involved are at most one dimensional, and thus the Bott--Samelson varieties constructed for $X$ are isomorphic to the ones constructed for $G/B$. From this, one obtains that $X$ and $G/B$ are isomorphic.

In the case of multiply laced diagrams the situation is more involved, since at certain steps the cohomology groups involved in the construction of Bott--Samelson varieties may have dimension bigger than one. In the case of diagrams of type $\DB$ or $\DC$ it is still possible to find {\it good words}, for which that issue does not occur, and one can conclude as in the  simply laced cases. At this point one is left with the Dynkin diagrams $\DF_4$ and $\DG_2$, which are completely different since, remarkably, they do not admit any good word.

The characterization of complete flag manifolds has been achieved also in these two cases, using vector bundles techniques in the case of $\DG_2$ (see \cite[Lemma 4.1]{OSWW}, \cite{Wa2}, \cite[Theorem 5]{MOSW}) and, in the case of $\DF_4$, by reducing it to the characterization of the rational homogeneous space $\DF_4(1)$ in terms of its VMRT, a problem that was solved by Mok in \cite{Mk3} by using techniques of complex analysis and differential geometry.

On the other hand, the fact that the characterization via Bott--Samelson varieties works smoothly for simply laced diagrams suggests the problem of understanding the role of the excess of parameters in the construction of Bott--Samelson varieties for multiply laced diagrams, and the deformations that these parameters provide. Degenerations of Bott--Samelson varieties are reasonably well-known (see \cite{GK94,PK16}); our question requires studying their deformations, a problem that, to our best knowledge, has not been considered yet in the literature. 

In a nutshell, the interpretation of the excess of parameters that we propose is the following: the lacing of a Dynkin diagram is closely related to the existence of a geometric (orthogonal or skew--symmetric) structure on one of the rational homogeneous varieties associated to it, say $G/P$; the complete flag $G/B$ is then constructed upon the corresponding notion of isotropy with respect to that structure. When restricted to certain subvarieties of $G/P$, images of some Bott--Samelson varieties, that geometric structure might have non trivial moduli, and each possible choice of an element in that moduli may give rise to a different successor in the construction of the tower of Bott--Samelson varieties. Two successors would be different as $\P^1$-bundles over their predecessor, while they may still be isomorphic as varieties; in this case we will say that the word $\el$ used for the construction of the Bott--Samelson varieties is {\em flag--compatible} (see Definition \ref{def:flag-compatible}).
In particular, flag--compatible  words for $\DF_4$ and $\DG_2$ can be used to characterize the corresponding complete flag manifolds, extending the main line of argumentation of \cite{OSWW,OSW} to these cases. 
In this paper we prove  the flag--compatibility of two words of maximal length for these diagrams, by interpreting geometrically the extra parameters for their Bott--Samelson varieties.  Combining this with  \cite[Propositions 4.6 and 4.8]{OSWW} we get the statement of Theorem \ref{thm:main2}:

\begin{main}
\label{thm:main}
Let $\cD(D)$ be a complete flag manifold. Then there exists a reduced word of maximal length $\el$ flag--compatible with $\cD(D)$.
\end{main}
  
 As a corollary we get a unified and completely algebraic proof of the characterization of complete flag manifolds given in \cite[Theorem 1.2]{OSWW}.  Furthermore, our arguments reveal a close relation among the parameter spaces of Bott--Samelson varieties and the moduli of isotropy structures induced by the lacing of the diagram on Schubert varieties. The problem of establishing if this relation is an equivalence for every reduced word remains open; a negative answer in this direction could lead to new examples of (probably singular) varieties closely related to flag varieties via deformations. 
 

\bigskip
\noindent{\bf Outline of the paper: }
after presenting the notation with use for rational homogeneous varieties and their contractions (Section \ref{sec:notn}), we introduce in Section \ref{sec:BSvar} the recursive construction of Bott--Samelson varieties upon reduced words. 
In particular we define the notion of flag--compatibility for reduced words, which allows us to state in a precise way the main result of the paper (Theorem \ref{thm:main}); we conclude the Section by recalling some descent rules (Proposition \ref{prop:descent}) that allow us to compute the set of parameters involved in the recursive construction of Bott--Samelson varieties. We deal with complete flag manifolds of Picard number two in Section \ref{sec:B2}, showing that the excess of parameters may be interpreted in terms of deformations of a contact form on $\P^3$ in the case $\DB_2$, and of a Cayley bundle 
on the smooth $5$-dimensional quadric  in the case $\DG_2$. 

The rest of the paper is devoted to the case $\DF_4$, that is the only case in which a purely algebraic proof of the characterization of flag varieties given in \cite{OSWW,OSW} was not known. We start in Section \ref{sec:preliminaries} by describing geometrically the complete flag manifold $\DF_4(D)$ in terms of flags in the projective contact manifold $\DF_4(1)$, and in terms of isotropic relative flags on the $\P^5$-bundle $\DF_4(1,4)\to \DF_4(1)$. In Section \ref{sec:redwords} we describe some algorithms, constructed upon the descent rules previously introduced, that allow us to choose a geometrically meaningful reduced word in the case $\DF_4$; more concretely, we choose a word $\el$ in which the excess of parameters occurs at a unique step. In Section \ref{sec:PandB} we look at this excess of parameters from the point of view of the geometric description presented in Section \ref{sec:preliminaries}, relating it to automorphisms of a certain Schubert variety, isomorphic to a family of isotropic relative flags on a $\P^5$-bundle $P$ over a plane $\P^2\subset\DF_4(1)$. We then conclude the paper (Section \ref{sec:proof}) by showing that the extra parameters of the Bott--Samelson construction correspond to the possible isotropy structures on that $\P^5$-bundle.\par
\bigskip
\noindent{\bf Acknowledgements: } The authors would like to thank J. Wi\'sniewski for his interesting comments. This project started when the second author was a Visiting Researcher at the Department of Mathematics of the University of Warsaw; he would like to thank this institution for its support and hospitality.  We also thank E. Ballico for providing useful references.

\section{Notation}\label{sec:notn}

We will start by introducing some notation on rational homogeneous varieties; we refer the interested reader to \cite{MOSWW} for details.

Let $G$ be  a semisimple algebraic group with Lie algebra $\fg$, Dynkin diagram $\cD$ and Weyl group $W$. We fix a Cartan subgroup $H\subset G$ and a Borel subgroup $B\subset G$ containing $H$, determining a base of positive simple roots $\Delta$ of $G$. The rational homogeneous varieties of type $\cD$ are, by definition, the projective quotients of $G$; any of them is determined by a subset of the set of nodes $D$ of $\cD$. More concretely, given a nonempty subset $I\subset D$, and its complementary subset $I^c=D\setminus I$, we may  consider the corresponding set of reflections $\{s_j,\,\,j\in I^c\}\subset W$, and the subgroup $W(I^c)$ that they generate, and define a rational homogeneous variety as follows:
$$
\cD(I):=G/BW(I^c)B.
$$
We will represent this variety by the Dynkin diagram $\cD$ marked on the nodes of $I$.
 
Furthermore, every rational homogeneous variety of type $\cD$ can be constructed in this way. Note that the Picard number of $\cD(I)$ is equal to $\sharp(I)$, and that the variety $\cD(D)$ equals the complete flag manifold $G/B$ of type $\cD$. 

Moreover, an inclusion $I\subsetneq J\subset \Delta$ provides a smooth fiber type contraction $\pi_{J,J\setminus I}:\cD(J)\to \cD(I)$; in the case in which $J=D$ we will simply write $\pi_{I^c}:=\pi_{D,I^c}:\cD(D)\to \cD(I)$.  The fibers of $\pi_{J,J\setminus I}$ are isomorphic to the rational homogeneous manifold $\cD_{I^c}(J\setminus I)$, where $\cD_{I^c}$ denotes the Dynkin subdiagram of $\cD$ supported on the nodes indexed by $I^c$.

In order to simplify the notation, when using an explicit expression of a subset $I\subset D$, we will avoid using curly brackets, and also commas when the subset appears as a subindex. For instance, we will write: $\pi_{12}:\DF_4(D)\to\DF_4(3,4)$, instead of $\pi_{\{1,2\}}:\DF_4(D)\to\DF_4(\{3,4\})$. 
In the case in which $I^c$ consists of a unique node $j$, the contraction
$$
\pi_j:\cD(D)\to \cD(D\setminus\{j\})
$$
is a $\P^1$-bundle, whose relative canonical bundle is denoted by $K_j$. Denoting by $\Gamma_j$ a fiber of $\pi_j$ (and, by abuse of notation, also its numerical class), one may compute the intersection matrix $(-K_i\cdot\Gamma_j)$, which turns out to  be equal to the Cartan matrix of the Lie algebra $\fg$.

\begin{remark}\label{rem:Picgens}
The classes $\Gamma_i$ are the generators of the Mori cone of $\cD(D)$, which is  simplicial. Moreover, there exist line bundles $H_i$, $i\in D$, satisfying that:
$$
H_i\cdot\Gamma_j=\delta_{ij}.
$$
Then the numerical classes of the $H_i$'s are the equations of the facets of $\cNE{\cD(D)}$, 
and each line bundle $H_i$ is the pullback of an ample line bundle on $\cD(i)$, $i\in D$; abusing notation, we will denote it also by $H_i\in\Pic(\cD(i))$. Finally, since $H_i\cdot\Gamma_i=1$, it follows that $H_i\in\Pic(\cD(i))$ is the ample generator of $\Pic(\cD(i))$. Since the image into $\cD(i)$ of a curve in the class $\Gamma_i$ has $H_i$-degree equal to one, we call these curves {\em lines on $\cD(i)$}.  
\end{remark}

\section{Bott--Samelson varieties}\label{sec:BSvar}

In this section we will recall some basic facts on Bott--Samelson varieties, which 
appear classically in the study of Schubert cycles of homogeneous manifolds (we refer to \cite{LT} and to the references therein for details), and we give a precise formulation of the main result of the paper:

\begin{setup}\label{set:FTvar} Let $X = \cD(D)$ be a complete flag manifold  of type $\cD$ and denote by  $\pi_i:X\to X^i:=\cD(D\setminus \{i\})$, $i\in D=\{1,\dots,n\}$ the elementary contractions of $X$. For each $i\in D$ we denote by $K_i$ the relative canonical bundle of $\pi_i$, and by $\Gamma_i$ the numerical class of its fiber. \end{setup}

In the sequel, we will need to deal with finite sequences of elements of $D$, that we will call {\em words of }$D$. We will use the following notation: 

\begin{notation}\label{notn:LT} Given a word $\el=(l_1,\dots,l_r)$, $l_i\in D$, we set  $\el[r]:=\emptyset$ and, for any 
$0\leq s\leq r-1$, $\el[s]:=(l_1,\dots,l_{r-s})$.
\end{notation}

\begin{remark}\label{rem:length} Denoting by  $W$ the Weyl group of $G$, the {\em length} $\ell(\el)$ of the word $\el=(l_1,\dots,l_r)$ is the length  of the element $s_{l_1} \circ s_{l_2} \circ \dots \circ s_{l_r}$ of $W$, where $s_i$ is the simple reflection associated with the node $i$ of the corresponding Dynkin diagram. The word $\el$ is {\em reduced} if its length is equal to $r$. A reduced word $\el$ has maximal
length if it cannot be written as $\el = \el_0[k]$, with $\el_0$ reduced and $k > 0$. A reduced word has maximal length iff the corresponding element in the Weyl group is the (unique) longest element, whose length equals the dimension of $X$.
For these and other properties the length of an element in a Weyl group  see \cite[1.6-1.8]{Hum3}); in practice the computation of the length is already implemented as an algorithm in several computer softwares, such as Sage.
\end{remark}

Given a reduced word $(l_1,l_2,\dots)$ of $D$, Bott--Samelson varieties are the smooth counterparts of the loci of (ordered) chains of curves in the classes $\Gamma_{l_1},\Gamma_{l_2},$ and so on. More precisely:

\begin{definition}\label{def:BS} Given a word $\el=(l_1,\dots,l_r)$ of $D$ and a point $x\in X$, we define a sequence of smooth varieties $Z_{\el[s]}$, $s=0,\dots,r$, called the {\it Bott--Samelson varieties of $X$ associated with $\el$, with starting point $x$},
and three sequences of morphisms
$$f_{\el[s]}:Z_{\el[s]} \to X,\quad p_{\el[s+1]}:Z_{\el[s]}\to Z_{\el[s+1]},\quad \sigma_{\el[s+1]}:Z_{\el[s+1]}\to Z_{\el[s]},$$
defined recursively as follows: for $s=r$ we set $Z_{\el[r]}:=\{x\}$ and $f_{\el[r]}$ to be the inclusion of $x$ in $X$. Then, for $s<r$, we define $Z_{\el[s]}$, $f_{\el[s]}$,  $p_{\el[s]}$, and $\sigma_{\el[s]}$ by considering the composition
$g_{\el[s+1]}:=\pi_{l_{r-s}}\circ f_{\el[s+1]}:Z_{\el[s+1]}\to X^{l_{r-s}}$ and taking its fiber product with $\pi_{l_{r-s}}$:
$$
\xymatrix@=35pt{Z_{\el[s]}\ar[r]^{f_{\el[s]}}\ar[d]^{p_{\el[s+1]}}&X\ar[d]^{\pi_{l_{r-s}}} \\
Z_{\el[s+1]}\ar@/^/[u]^{\sigma_{\el[s+1]}} \ar[ru]_{f_{\el[s+1]}}\ar[r]^{g_{\el[s+1]}}&X^{l_{r-s}}}
$$
The universal property of the fiber product tells us that $f_{\el[s+1]}$  factor via $Z_{\el[s]}$, providing the section $\sigma_{\el[s+1]}$ of $p_{\el[s+1]}$ shown in the diagram. 
Given a Bott--Samelson variety $Z_{\el}$ of $X$ with starting point $x$, its image into $X$, denoted by:
$$
\Sc_\el:=f_{\el}(Z_\el)\subset X,
$$
is the {\em Schubert variety of $X$}, associated with the word $\el$ and the point $x$.
\end{definition}

Summing up, we have a diagram of the form:

$$
\xymatrix@=35pt{\{x\}=Z_{\el[r]}\ar[r]\ar[drrrr]&\dots\ar@/_/[]+<-1ex,1ex>;[l]+<4ex,1ex>\ar[r]&Z_{\el[s]}\ar@/_/[]+<0ex,1.5ex>;[l]+<1ex,1ex>\ar[r]\ar[drr]+<-1ex,1.5ex>&\dots\ar@/_/[]+<-1ex,1ex>;[l]+<2ex,1ex>\ar[r]&Z_{\el}\ar@/_/[]+<-1ex,1.5ex>;[l]+<1ex,1ex>\ar[d]
\\
&&&&X}
$$
where the left-headed arrows are the $\P^1$-bundles $p_{\el[s+1]}$, the right-headed arrows are sections 
$\sigma_{\el[s+1]}$, and the maps to $X$ are evaluations $f_{\el[s+1]}$, that commute with the sections. 


\begin{remark} The existence of the section $\sigma_{\el[s+1]}$ tells us that $Z_{\el[s]}$ can be described as the projectivization of an extension $\cF_{\el[s]}$ of $\cO_{Z_{\el[s+1]}}$ by $f_{\el[s+1]}^*K_{l_{r-s}}$:
\begin{equation}\label{eq:F}
0 \lra f_{\el[s+1]}^*K_{l_{r-s}}\longrightarrow
\cF_{\el[s]} \longrightarrow \cO_{Z_{\el[s+1]}}\lra 0.
\end{equation}
In particular $Z_{\el[s]}$ is completely determined by a cohomology class  $$\zeta_{\el[s]} \in H^1(Z_{\el[s+1]},f_{\el[s+1]}^*K_{l_{r-s}}),$$
modulo homotheties.
\end{remark}


Assume that there exists $k>s$ satisfying that $l_{r-k}=l_{r-s}$ and let $C_k\cong\P^1$ in $Z_{\ell[s+1]}$ be the image of a fiber of $p_{\el[k+1]}:Z_{\el[k]}\to Z_{\el[k+1]}$  via the sections $\sigma_{\el[r-j]}$, $j= r-k, \dots, r-s-1$. 
Then (see \cite[Remark 3.4]{OSWW}) the restriction to $C_k$ of the sequence
 (\ref{eq:F}) is
$$0\lra  \cO_{\P^1}(-2)\longrightarrow \cO_{\P^1}(-1) \oplus \cO_{\P^1}(-1)
\longrightarrow \cO_{\P^1} \lra  0;$$
in particular the image of $\zeta_{\el[s]}$ via the restriction map 
\begin{equation}\label{eq:restriction}
r_k:=H^1(Z_{\el[s+1]},f_{\el[s+1]}^*K_{l_{r-s}}) \to H^1(C_k,f_{\el[s+1]}^*K_{l_{r-s}}|_{C_k})
\end{equation}
is not zero.

\begin{definition}
We will say that a cocycle $\zeta \in  H^1(Z_{\el[s+1]},f_{\el[s+1]}^*K_{l_{r-s}})$ is {\em non trivial on repeated curves} if it  is not contained in $\ker(r_k)$, for every $k > s$ such that $l_{r-k}=l_{r-s}$.
\end{definition}

\begin{definition}\label{def:flag-compatible}
We will say that a reduced word $\el$ is {\em flag--compatible with $\cD(D)$} if  any deformation of $Z_{\el[s]}$ given by a cocycle which is non trivial on repeated curves is isomorphic to $Z_{\el[s]}$, for any $s=0, \dots, \ell(\el)$. 
If $\el$ is flag--compatible with $\cD(D)$ we will also say that $Z_{\el}$ is {\em flag--compatible with $\cD(D)$}.

\end{definition}

We can now state the main result of the paper:

\begin{theorem}\label{thm:main2}
Let $X = \cD(D)$ be a complete flag manifold. Then there exists a reduced word of maximal length $\el$,  which is flag-compatible with $\cD(D)$.   
\end{theorem}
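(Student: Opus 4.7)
The plan is to argue by a case analysis on the Dynkin type of $\cD$. I would first reduce to the case where $\cD$ is irreducible: for a disjoint union, $\cD(D)$ factors as a product of flag manifolds, the $\P^1$-bundle structures decompose accordingly, and a concatenation of flag--compatible words for the factors is flag--compatible for the whole diagram. For irreducible $\cD$ I split according to whether the diagram is simply laced or multiply laced.

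If $\cD$ is simply laced of type $\DA_n$, $\DD_n$ or $\DE_{6,7,8}$, I would pick any reduced word $\el$ of maximal length and, using the descent rules of Proposition \ref{prop:descent}, verify that at every step $s$ the cohomology group $H^1(Z_{\el[s+1]},f_{\el[s+1]}^*K_{l_{r-s}})$ has dimension at most one. Then any two nonzero cocycles differ by a scalar, define the same extension class up to homothety, and hence produce isomorphic $\P^1$-bundles over $Z_{\el[s+1]}$; thus $\el$ is automatically flag--compatible. The same strategy handles $\DB_n$ and $\DC_n$ for $n\ge 3$ by invoking the existence of good words from \cite[Propositions 4.6 and 4.8]{OSWW}, for which every cohomology group appearing in the construction has dimension at most one.

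The substantive cases are $\DB_2$, $\DG_2$ and $\DF_4$, where no good word exists. Here my approach has three parts. First, I would search for a reduced word $\el$ of maximal length that confines the excess of parameters (i.e.\ the appearance of cohomology groups of dimension greater than one) to as few steps as possible; for $\DF_4$, a systematic descent-rule algorithm along the lines suggested in Section \ref{sec:redwords} should push this excess down to a single step. Second, I would interpret the extra parameters geometrically: at each problematic step the different cocycles should correspond to different choices of an auxiliary geometric structure --- a contact form on $\P^3$ in type $\DB_2$, a Cayley bundle on the smooth five-dimensional quadric in type $\DG_2$, an isotropic relative flag in type $\DF_4$ --- on a Schubert variety which is the image of $f_{\el[s+1]}$ into some $\cD(I)$. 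Third, I would show that any two such choices are related by an automorphism of the ambient variety, so that the resulting Bott--Samelson varieties are abstractly isomorphic, even though they may differ as $\P^1$-bundles over $Z_{\el[s+1]}$.

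The main obstacle is the $\DF_4$ case. The relevant Schubert variety should be realized as a $\P^5$-bundle $P$ over a plane $\P^2\subset\DF_4(1)$, obtained by restricting the $\P^5$-bundle $\DF_4(1,4)\to\DF_4(1)$ to that plane, and one must match the projective moduli of isotropy structures on $P$ with the parameter space of cocycles modulo homothety. The difficulty is precisely to exhibit, for each pair of admissible cocycles, an ambient automorphism that carries one isotropic structure to the other while preserving the combinatorics of the word $\el$; this forces a careful interplay between the descent rules, the contact geometry of $\DF_4(1)$, and the theory of relative isotropic flags, and would account for the bulk of the remaining sections of the paper.
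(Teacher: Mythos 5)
Your proposal follows essentially the same route as the paper: reduction to irreducible diagrams, citation of \cite[Propositions 4.6 and 4.8]{OSWW} for types $\DA$, $\DD$, $\DE$, $\DB_n$, $\DC_n$, and a geometric interpretation of the excess extension classes (contact forms, Cayley bundles, relative isotropic structures on a $\P^5$-bundle over a plane in $\DF_4(1)$) for the remaining cases, concluded by matching the moduli of such structures with the parameter space of admissible cocycles modulo homothety. The only inaccuracy is the claim that no good word exists for $\DB_2$ — it does, being covered by the $\DB_n/\DC_n$ result, and the paper's contact-form discussion there is purely illustrative — and, as you acknowledge, the real content (the choice of the word $\el_0$ for $\DF_4$, the descent tables, the identification of the bundle $\cE$, and the surjectivity of the map from automorphisms of $\cE$ onto $E^+$ obtained via injectivity and the Ax--Grothendieck theorem) is only sketched, exactly as the paper devotes Sections 5--7 to it.
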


In  \cite[Propositions 4.6 and 4.8]{OSWW} it has been shown that:

\begin{itemize}
\item If $\cD$ is equal to $\DA_n$, $\DD_n$ or $\DE_n$, then  every reduced word $\el$ of $D$ is flag--compatible with $\cD(D)$.
\item If $\cD$ is equal to $\DB_n$ or $\DC_n$, there exists a reduced word $\el$ of maximal length in $D$ which is flag--compatible with $\cD(D)$.
\end{itemize}

In the present paper we are going to deal with the cases $\cD=\DF_4, \DG_2$; then, by \cite[Lemma 3.2]{OSWW} the result will hold for every complete flag manifold of a semisimple algebraic group.

\subsection{Descent rules for cohomology}

We will now describe a procedure to compute bounds on the dimension of the groups $H^1(Z_{\el[s+1]},f_{\el[s+1]}^*K_{l_{r-s}})$, by descending the Bott-Samelson tower one step at a time.

To simplify and compactify the notation, whenever it is clear which Bott--Samelson variety $Z_\el$ we are considering as a base, we will set, for a line bundle $L \in \Pic(X)$,  $$e^{L}:=f_{\el}^*L ,$$
and  
$$h^i(Z_\el, e^{L_1}+e^{L_2}):=h^i(Z_\el, e^{L_1})+h^i(Z_\el, e^{L_2}).$$
Using standard techniques of coherent subsheaves one may formulate the following set of {\em descent rules}, which is a straightforward corollary of \cite[Lemma 3.13]{OSWW}:

\begin{proposition}\label{prop:descent}
Let $\el=(l_1,\dots,l_r)$ be the word defining the Bott-Samelson variety $Z_\el$, and let $L$ be a line bundle on $X$, of degree $s$ with respect to $\Gamma_{l_r}$. Then:
\begin{enumerate}
\item[(\Minus)] If $s\le-1$, then $h^i(Z_\el,e^{L})\le\sum_{j=1}^{-s-1}h^{i-1}(Z_{\el[1]},e^{L-jK_{l_r}})$, for all $i$; in particular, if $s=-1$, then $h^i(Z_\el,e^L)=0$ for all $i$.
\item[(0)]
If $s=0$, then $h^i(Z_\el,e^{L})=h^i(Z_{\el[1]},e^{L})$ for all $i$.
\item[(\Plus)] If $s\geq 1$, then $h^i(Z_\el,e^L)\leq \sum_{j=0}^{s}h^i(Z_{\el[1]},e^{L+jK_{l_r}})$ for all $i$.
\end{enumerate}
\end{proposition}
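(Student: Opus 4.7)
The plan is to exploit the $\P^1$-bundle structure $p_{\el[1]}:Z_\el\to Z_{\el[1]}$ coming from the fiber-product construction of $Z_\el$, together with the Leray spectral sequence. Since $f_\el$ maps each fiber of $p_{\el[1]}$ isomorphically onto a fiber of $\pi_{l_r}:X\to X^{l_r}$, the line bundle $e^L$ has relative degree $s=L\cdot\Gamma_{l_r}$. Writing $Z_\el=\P(\cF_\el)$ and normalising $\cO_{Z_\el}(1)$ so that the section $\sigma_{\el[1]}$ corresponds to the quotient $\cF_\el\to\cO_{Z_{\el[1]}}$ in (\ref{eq:F}), one identifies $e^L\cong\cO_{Z_\el}(s)\otimes p_{\el[1]}^*f_{\el[1]}^*L$.

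Since the fibers of $p_{\el[1]}$ are $\P^1$'s, only $R^0p_{\el[1]*}$ and $R^1p_{\el[1]*}$ can be nonzero, and the Leray spectral sequence reduces to short exact sequences
$$0\to H^i(Z_{\el[1]},R^0p_{\el[1]*}e^L)\to H^i(Z_\el,e^L)\to H^{i-1}(Z_{\el[1]},R^1p_{\el[1]*}e^L)\to 0.$$
The equality (0) is immediate: for $s=0$, $e^L\cong p_{\el[1]}^*f_{\el[1]}^*L$, so $R^0p_{\el[1]*}e^L=f_{\el[1]}^*L$ and $R^1=0$. Likewise, for $s=-1$ the fiber cohomology of $e^L$ vanishes, so both direct images are zero, proving the vanishing part of (\Minus).

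For the two quantitative inequalities, I would compute the nonzero direct image as a symmetric power of $\cF_\el$: when $s\geq 1$ one has $R^1p_{\el[1]*}e^L=0$ and $R^0p_{\el[1]*}e^L\cong \Sym^s\cF_\el\otimes f_{\el[1]}^*L$; when $s\leq -2$, $R^0=0$, and relative Serre duality together with $\omega_{p_{\el[1]}}\cong\cO_{Z_\el}(-2)\otimes p_{\el[1]}^*f_{\el[1]}^*K_{l_r}$ gives $R^1p_{\el[1]*}e^L\cong(\Sym^{-s-2}\cF_\el)^\vee\otimes f_{\el[1]}^*(L-K_{l_r})$. Applying the standard filtration of $\Sym^n$ to the extension (\ref{eq:F}) yields graded pieces $(f_{\el[1]}^*K_{l_r})^{j}$ for $j=0,\dots,n$; after twisting with the factors above (and reversing the filtration on the dual in the negative case) these become exactly $e^{L+jK_{l_r}}$ for $j=0,\dots,s$ in (\Plus), and $e^{L-jK_{l_r}}$ for $j=1,\dots,-s-1$ in (\Minus). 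Breaking each filtration into short exact sequences and iterating the resulting long exact cohomology sequences on $Z_{\el[1]}$ yields the stated bounds.

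The main step requiring care is bookkeeping: fixing once and for all the conventions on $\cO_{\P(\cF_\el)}(1)$, on the filtration of $\Sym^n$ of an extension, and on the relative dualising sheaf, so that the graded pieces come out as the specific line bundles $e^{L\pm jK_{l_r}}$ listed in the statement. Beyond this, the argument is a direct application of Leray together with the standard subquotient technique for bounding the cohomology of a filtered sheaf, in accordance with the quoted \cite[Lemma 3.13]{OSWW}.
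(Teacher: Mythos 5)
Your argument is correct, and it is exactly the standard argument underlying the result: the paper itself does not spell out a proof but delegates it to \cite[Lemma 3.13]{OSWW}, whose content is precisely the pushforward along the $\P^1$-bundle $p_{\el[1]}$ combined with the filtration of $\Sym^s\cF_\el$ (resp.\ its dual, via relative duality) induced by the extension (\ref{eq:F}). Your identifications of the direct images and of the graded pieces $e^{L+jK_{l_r}}$, $e^{L-jK_{l_r}}$ all check out, so the proposal supplies a complete proof along the same lines as the cited lemma.
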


\section{Taming bad words: the rank two cases}
\label{sec:B2}

An important point in the use of the descent rules in the proof of  \cite[Propositions 4.6 and 4.8]{OSWW} is that, at each step of the construction of Bott--Samelson varieties (say, in the construction of $Z_{\el[k-1]}$ upon $Z_{\el[k]}$, with $\el[k-1]=(l_1,l_2,\dots,j)$), flag--compatibility is guaranteed by a cohomological condition of the form:
$$h^1(Z_{\el[k]},e^{K_j})\leq 1.$$
The fact that in the cases $\DB_n$ and $\DC_n$ we may always find a good reduced word for which this condition is satisfied at each step (see \cite[Propositions 4.8]{OSWW}) does not mean that the corresponding flag varieties cannot be reconstructed by means of ``bad'' reduced words, in which some of the cohomology groups that we need to consider have dimension at least two. Furthermore, the problem  of interpreting this excess of extensions, and of checking whether the extra extensions provide isomorphic Bott--Samelson varieties or not, is fundamental in the cases $\DG_2$, and $\DF_4$, for which there is no possible choice of a ``good'' reduced word that allows us to avoid this problem. 


In this section we illustrate the kind of interpretation we propose for the excess of extensions by considering the only bad word for the complete flag of type $\DB_2$. Then we will show how this interpretation can be extended to the case $\DG_2$, leading to a characterization of the corresponding flag manifold in terms of its Bott--Samelson varieties. The application of these ideas to the characterization of the complete flag manifold of type $\DF_4$ requires also a number of technicalities, with which we deal in the next sections.

\subsection{The $\DB_2$ case}\label{ssec:B2}

In the case of the complete flag of type $\DB_2$, we have a unique bad word, $\el=(2,1,2,1)$, for which we have a possible excess of extensions only at the third step:
\begin{eqnarray*}
h^1(Z_{(212)},e^{K_1}) &\le  &h^1(Z_{(21)},e^{K_1}) + h^1(Z_{(21)},e^{K_1+K_2})+ h^1(Z_{(21)},e^{K_1+2K_2})\\&=&1+0+h^1(Z_{(2)},e^{K_1+2K_2})=2.
\end{eqnarray*}

The only condition we have on the cocycle $\theta$ defining $Z_\el$ is that its restriction to a fiber  $\gamma_1$ of $Z_{(21)}\to Z_{(1)}$ is different from zero (cf \cite[Remark~3.4]{OSWW}), so that we only know that:
$$
\theta\in H^+(\gamma_1):=\{\theta'\in H^1(Z_{(212)},e^{K_1})\,\,|\,\, \theta'_{|\gamma_1}\neq 0\}\cong \C^*\times\C.
$$
Then the $\P^1$-bundles constructed by means of extensions in $H^+(\gamma_1)$ are param\-etr\-ized by the quotient $E^+(\gamma_1)$ of $H^+(\gamma_1)$ modulo homotheties. Summing up, the $\P^1$-bundle $Z_{(2121)}\to Z_{(212)}$ of Bott--Samelson varieties for $\DB_2$ deforms in a family parametrized by $E^+(\gamma_1)\cong\C$. 

In order to interpret this family, let us recall first some basic facts on this flag manifold. 
The manifolds $\DB_2(1)$ and $\DB_2(2)$ are, respectively, a smooth $3$-dimensional quadric $\Q^3$, and a $3$-dimensional projective space $\P^3$, and we have contractions
$$
\xymatrix{&\DB_2(1,2)\ar[dr]^{\pi_1}\ar[dl]_{\pi_2}&\\\DB_2(1)&&\DB_2(2)}
$$
We interpret this diagram as a family of lines in $\P^3$,  $\pi_2:\DB_2(1,2)\to\DB_2(1)$, with evaluation $\pi_1$. This family is not the complete family of lines in $\P^3$, but the family of isotropic lines in $\P^3$ with respect to a certain contact form on $\P^3$, that we will discuss later. 

The variety $Z_{(212)}$ is isomorphic to the corresponding Bott--Samelson variety of the complete flag of type $\DB_2$,   
and so it admits a surjective morphism 
$$\pi_2\circ f_{(212)}:Z_{(212)}\lra \DB_2(2)=\P^3.$$ 
One may check that this is in fact the composition of two blowups: first of a line $R\subset\P^3$ (the image of $Z_{(2)}$ in $\P^3$), and then of a conic in the exceptional divisor. The Bott--Samelson variety $Z_{(2121)}$ of the flag of type $\DB_2$ is then constructed by pulling back to $Z_{(212)}$ the bundle $\DB_2(1,2)\to\DB_2(2)$ via that morphism. 

This bundle can be described as the projectivization of a null correlation bundle $\cN$ on $\P^3$, which appears as the kernel of a contact form $\rho$ on $\P^3$:
$$
0\ra \cN(1)\lra T_{\P^3}\stackrel{\rho}{\lra}\cO_{\P^3}(2)\ra 0
$$
with respect to which the $\P^1$-bundle $Z_{(212)}\to Z_{(21)}$ may be thought of as the family of isotropic lines meeting $R$, which is isotropic itself. 

The key point here is that the contact form $\rho$ is not unique: the possible contact forms in $\P^3$ are in one to one correspondence to  nondegenerate antisymmetric $4\times 4$ matrices, and the corresponding bundles $\P(\cN)$ correspond to the classes modulo homotheties of those matrices. 

Now one may check that the family of bundles $\P(\cN)$ for which the line $R$, and the lines parametrized by $Z_{(212)}\to Z_{(21)}$ are isotropic is $1$-dimensional, parametrized by an affine line $\C$. In fact, by choosing an appropriate system of coordinates in $\P^3$, the antisymmetric matrices that define the bundles $\cN$ satisfying that property may be written as:
$$
\alpha\left(\begin{array}{cccc}0&0&0&1\\0&0&1&0\\0&-1&0&\mu\\-1&0&-\mu&0\end{array}\right),\quad \alpha\in\C^*,\mu\in \C.
$$
Moreover, given two different bundles $\P(\cN_1)$ and $\P(\cN_2)$, their pullbacks to  $Z_{(212)}$ are different as bundles over $Z_{(212)}$ (but not as varieties), and so we have an injective morphism:
$$
\C\lra E^+(\gamma_1)\cong\C.
$$ 
The fact that this map is indeed a morphism follows from the universal property of the universal family of extensions (cf. \cite{Lg83}). But the injectivity of the map implies its surjectivity 
 and so we may claim that every bundle of the family $E^+(\gamma_1)$ corresponds to the choice of a contact form compatible with the family $Z_{(212)}\to Z_{(21)}$, and we conclude that any $\P^1$-bundle over $Z_{(212)}$ of the family $E^+(\gamma_1)$ is a Bott--Samelson variety for a flag manifold of type $\DB_2$, defined by a certain contact form on $\P^3$. 

\subsection{The $\DG_2$ case}\label{ssec:G2}

Let us start by recalling some geometric facts on the rational homogeneous varieties of type $\DG_2$, and the corresponding contractions:
$$
\xymatrix{&\DG_2(1,2)\ar[dr]^{\pi_1}\ar[dl]_{\pi_2}&\\\DG_2(1)&&\DG_2(2)}
$$
We will usually think of this as a subfamily of the complete family of lines in the $5$-dimensional quadric $\DG_2(1)$, more concretely as the subfamily of lines that are {\em isotropic} with respect to a structure of $\DG_2$-variety on this quadric; in the next section we will show how these structures are defined, and study their parameter space. We refer to \cite{O} for details.

\medskip

\subsubsection{$\DG_2$-structures on the smooth $5$-dimensional quadric}\label{ssec:G2struct}
The $5$-dimensional quadric $\DG_2(1)$  may be also written as the rational homogeneous space $\DB_3(1)$; as such, its family of planes is parametrized by the smooth $6$-dimensional quadric $\DB_3(3)$:
$$
\xymatrix{&\DB_3(1,3)\ar[dr]^{\pi'_1}\ar[dl]_{\pi'_3}&\\\DB_3(1)&&\DB_3(3)}
$$
 The evaluation morphism $\pi'_3:\DB_3(1,3)\to \DB_3(1)$ is a $\P^3$-bundle; more concretely, it is the projectivization of a rank $4$ globally generated vector bundle $\cS^\vee$ on $\DB_3(1)$, whose evaluation of global sections provides the morphism $\pi'_1$. A general section of $\cS^\vee$ (corresponding to a smooth hyperplane section of $\DB_3(3)$ in its natural embedding into $\P^7$) has no zeroes, providing a short exact sequence of vector bundles:
 $$
 \shse{\cO}{\cS^\vee}{\cG}
 $$
Moreover (see \cite[Theorem 2.8]{O2}),  we have a skew-symmetric isomorphism $\omega:\cS^\vee\to\cS(1)$, where $\cO(1)$ denotes the restriction to the $5$-dimensional quadric $\DB_3(1)$ of the hyperplane line bundle of its natural embedding into $\P^6$. Given a non vanishing section $s:\cO\to\cS^\vee$, its composition with $\omega$ factors via $\cG^\vee(1)$. The cokernel of the inclusion $\cO\to\cG^\vee(1)$ is a rank two vector bundle, that we denote by $\cC(1)$, which is, by construction, isomorphic to $\cC^\vee$, and whose projectivization is precisely the variety $\DG_2(1,2)$; moreover, the map $\pi_1:\DG_2(1,2)\to\DG_2(2)$ corresponds to the evaluation of global sections of $\cC(1)$. The bundle $\cC$ is usually called a {\em Cayley bundle} on $\DB_3(1)$. Following \cite{O}, any rank two bundle on the $5$-dimensional quadric with the same Chern classes as $\cC$ is constructed in this way. In other words, the possible structures of $\DB_3(1)$ as a $\DG_2$-variety are parametrized by the smooth hyperplane sections of $\DB_3(3) \subset \P^7$. 

Let us interpret geometrically the information above, in terms of families of lines and planes in the $5$-dimensional quadric $\DB_3(1)$. First of all, we have a commutative diagram with exact rows and columns:
$$
\xymatrix@=42pt{\cO\hspace{0.7cm}\ar@{^{(}->}[]+<-0.2ex,0ex>;[r]+<-1.3ex,0ex>^{\omega\circ s}\ar@{=}+<-2.4ex,-1.6ex>;[d]+<-2.4ex,1.8ex>&\hspace{0.7cm}\cG^\vee(1)\ar[r]\ar@{^{(}->}+<1.9ex,-2.2ex>;[d]+<1.9ex,1.8ex>&\cC(1)\cong\cC^\vee\ar@{^{(}->}+<2.6ex,-2.2ex>;[d]+<2.6ex,1.8ex>\\
\cO\hspace{0.7cm}\ar@{^{(}->}+<-0.2ex,0ex>;[r]^{s}&\cS^\vee\cong\cS(1)\ar[r]+<1ex,0ex>\ar+<1.9ex,-2.2ex>;[d]+<1.9ex,1.8ex>_{s^\vee}&\hspace{0.8cm}\cG\ar+<2.6ex,-2.2ex>;[d]+<2.6ex,1.8ex>_{(\omega\circ s)^\vee}\\
&\hspace{0.82cm}\cO(1)\ar@{=}[r]+<-0.4ex,0ex>&\hspace{0.8cm}\cO(1)
}
$$
Projectivizing the diagram above, we get a projective subbundle $\P(\cG)\subset \P(\cS^\vee)$, together with a section $\sigma_s:\DB_3(1)\lra \P(\cG)\hookrightarrow \P(\cS^\vee)$, whose composition with the morphism $\pi'_1: \P(\cS^\vee)\to \DB_3(3)$, that we denote by $$
\pi_s:\DB_3(1)\lra \DB_3(3),
$$ 
is given by the evaluation of global sections of $\cO(1)$, and so it is the embedding of the $5$-dimensional quadric $\DB_3(1)$ as a smooth hyperplane section of the $6$-dimensional quadric $\DB_3(3)$. The variety $\P(\cG)$ can then be thought as a family of planes in $\DB_3(1)$, that we call {\em isotropic with respect to $s$}, and that are parametrized by $\DB_3(1)$ via the map $\pi_s$. On the other hand, we have a rational map $$
\rho_s:\P(\cG)\dashrightarrow \P(\cC^\vee),
$$ 
that can be seen as the linear projection of the $\P^2$-bundle $\P(\cG)$ from the section $\sigma_s(\DB_3(1))$. In other words, given a point $p\in \DB_3(1)$, a line passing by $p$ is isotropic (with respect to the $\DG_2$-structure determined by the section $s$) if and only if it is contained in the plane $\pi_s(p)$. In particular, by using the map $\pi_s$, we may identify $\P(\cG)$ with  the subset of $\DB_3(1)\times \DB_3(1)$:
$$
\left\{(p_1,p_2)\in \DB_3(1)\times \DB_3(1)|\,\,p_1,p_2\in\ell\mbox{, for some $\ell$ isotropic w.r.t. }s\right\},
$$
by sending $(p_1,p_2)$ to the pair $(p_1,\pi_s(p_2))$. 
Note that, via this identification, the section $\sigma_s$ becomes the diagonal morphism from $\DB_3(1)\to \DB_3(1)^2$, and the rational map $\rho_s:\P(\cG)\dashrightarrow \P(\cC^\vee)$ turns into the map sending:
$$
(p_1,p_2)\mapsto(p_1,p_1+p_2),
$$
which is defined in the complementary set of $\sigma_s(\DB_3(1))$.

\medskip

\subsubsection{Characterizing $\DG_2(1,2)$ with Bott-Samelson varieties}\label{ssec:BSG2}

Let us show now how the geometric information provided above may be used to interpret the excess of extensions in the Bott--Samelson construction for the case $\DG_2$. In this case, the Cartan matrix is:
$$
(-K_i\cdot\Gamma_j)=\begin{pmatrix}2&-1\\-3&2\end{pmatrix}, 
$$
and we have two possible words of maximal length, both of them giving an excess of extensions at a certain step of the construction of the corresponding chains of Bott--Samelson varieties. We will consider here only the case $\el=(2,1,2,1,2,1)$; the other one can be treated in a similar way. 

By using Proposition \ref{prop:descent}, one finds possible excess of extensions at precisely one step, namely:
\begin{eqnarray*}
h^1(Z_{\el[2]},e^{K_2}) &= &h^1(Z_{(2121)},e^{K_2})\\[2mm] 
&\le& h^1(Z_{(212)},e^{K_2}+ e^{K_1+K_2}+e^{2K_1+K_2}+ e^{3K_1+K_2})\\[2mm]
&\le&h^0(Z_{(21)},e^\cO + e^{2K_1+K_2}+ e^{3K_1+K_2}+ e^{3K_1+2K_2})\\[2mm]
&\le&1+h^0(Z_{(2)},e^{2K_1+K_2} + e^{K_1+K_2}+ e^{3K_1+2K_2})
\\[2mm]
&=&1+1 + 0 +0=2.
\end{eqnarray*}

As in the case $\DB_2$ the only condition we impose on the cocycle $\theta$ defining $Z_\el$ is that its restrictions to the curves $\gamma_2$, fibers of  $Z_{(2)}\to \{x\}$ and of  $Z_{(212)}\to Z_{(21)}$ are different from zero; that is, for each of the curves $\gamma_2$ 
$$
\theta\in 
H^+(\gamma_2):=\{\theta'\in H^1(Z_{(2121)},e^{K_2})\,\,|\,\, \theta'_{|\gamma_2}\neq 0\},
$$
so that the $\P^1$-bundles constructed by means of extensions in $H^+(\gamma_2)$ are param\-etr\-ized by the quotient $E^+(\gamma_2)$ of $H^+(\gamma_2)$ modulo homotheties. 

Denoting by $s\in H^0(\DB_3(1),\cS^\vee)$  the section defining the initial $\DG_2$-structure on $\DB_3(1)$, we will identify $H^+(\gamma_2)$ with the set of sections $s'\in H^0(\DB_3(1),\cS^\vee)$ that are compatible with $Z_{(2121)}$, in the sense that the flags point-line in $\DB_3(1)$ parametrized by $Z_{(2121)}$ are isotropic with respect to those structures.

Set $p:=\pi_2(Z_{(2)})$, and note that, as discussed in Section \ref{ssec:G2struct}, the variety $Z_{(21)}\subset \DG_2(1)$ is equal to:
$$
Z_{(21)}=\left\{(p_1,\ell)\in \DG_2(1,2)|\,\,p_1\in\ell,\,\,\ell\mbox{ isotropic line passing by }p\right\}.
$$
In a similar way, we may write:
$$
Z_{(2121)}=\left\{(p_2,\ell_1)\in \DG_2(1,2)|\,\, \ell_1\cap\ell\neq \emptyset, \mbox{ for some }\ell\mbox{ isotropic line passing by }p\right\}.
$$

The closures of the inverse images of these two sets in $\P(\cG)$ are then:
$$\begin{array}{l}\vspace{0.2cm}
\ol{\rho_s^{-1}(Z_{(21)})}=\left\{(p_1,p_2)\in \DB_3(1)^2|\,\, p_1,p_2\in\ell, \,\,\ell\mbox{ isotropic line by }p\right\},\\
\ol{\rho_s^{-1}(Z_{(2121)})}=\left\{(p_2,p_3)\in \DB_3(1)^2|\,\, \ell\cap(p_2+p_3)\neq\emptyset,  \,\,\ell\mbox{ isotropic line by } p\right\}.
\end{array}
$$
We conclude that the variety $Z_{(2121)}$ determines a set of isotropic planes equal to the image via $\pi_s:\DB_3(1)\to \DB_3(3)$ of $\pi'_1(\ol{\rho_s^{-1}(Z_{(2121)})}) \subset \DB_3(1)$. 

This is precisely the locus of chains of length two of lines isotropic with respect to $s$, which we claim is a $4$-dimensional quadric cone with vertex $p$, that is the intersection of the $5$-dimensional quadric $\DB_3(1)$ with its projective tangent hyperplane at $p$. In fact, given a general chain $(\ell,\ell_1)$ of two isotropic lines, with $p\in\ell$ and $\ell\neq\ell_1$, setting $p_1=\ell\cap \ell_1$, we get $p\in\ell\in\pi_s(p_1)$, and this tells us that $\ell_1\subset\pi_s(p_1)$ lies in the $4$-dimensional cone spanned by the lines in $\DB_3(1)$ passing by $p$. Since by construction $\pi'_1(\ol{\rho_s^{-1}(Z_{(2121)})})$ is $4$-dimensional, the claim follows. 

We may then describe the set of $\DG_2$-structures in $\DB_3(1)$ which are compatible with the chosen Bott-Samelson variety $Z_{(2121)}$:

\begin{lemma}
The set $S$ of smooth hyperplane sections of the $6$-dimensional quadric $\DB_3(3)$ whose corresponding $\DG_2$ structures in $\DB_3(1)$ are compatible with the chosen Bott-Samelson variety $Z_{(2121)}$, is isomorphic to $\C$.
\end{lemma}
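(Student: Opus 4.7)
The plan is to identify $S$ with a subset of $\P^7\cong\P(H^0(\DB_3(1),\cS^\vee))$, namely the classes of nowhere-vanishing sections $s'$ inducing a $\DG_2$-structure compatible with $Z_{(2121)}$, and to show that it is an affine line. This mimics the strategy of Subsection~\ref{ssec:B2}: translate compatibility into a linear condition on $s'$, solve it, and impose the smoothness criterion.

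First, I would unravel the compatibility condition. The flags parametrized by $Z_{(2121)}$ are the pairs $(p_2,\ell')$ obtained as $p_1$ varies in $\pi_s(p)$, $\ell'$ varies over $s$-isotropic lines through $p_1$, and $p_2\in\ell'$. Requiring each such flag to be $s'$-isotropic means $\ell'\subset\pi_{s'}(p_2)$, which is equivalent (by the symmetry of the $s'$-adjacency) to $\ell'\subset\pi_{s'}(p_1)$; since $\ell'$ ranges over all $s$-isotropic lines through $p_1$, this forces $\pi_s(p_1)\subset\pi_{s'}(p_1)$, hence equality by dimension, for every $p_1\in\pi_s(p)$. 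Via the extension $0\to\cO\to\cS^\vee\to\cG\to 0$ determined by $s$, the condition $\pi_{s'}|_{\pi_s(p)}=\pi_s|_{\pi_s(p)}$ becomes: the image $\bar s'\in H^0(\DB_3(1),\cG)$ vanishes on the plane $\pi_s(p)$. Thus the compatible sections form the subspace
\[
V=\C\cdot s \,\oplus\, \sigma\bigl(H^0(\cG\otimes\cI_{\pi_s(p)})\bigr)\subset H^0(\DB_3(1),\cS^\vee)
\]
for any chosen linear splitting $\sigma$ of $H^0(\cS^\vee)\to H^0(\cG)$.

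The key step is then the cohomological claim $h^0(\cG\otimes\cI_{\pi_s(p)})=1$. I would exhibit the generator $\bar s_0$ geometrically by using the $\P^2$-bundle $\P(\cG)\to\DB_3(1)$ of Subsection~\ref{ssec:G2struct}, whose fiber over $p$ is exactly $\pi_s(p)$: the point $[\pi_s(p)]\in\DB_3(3)$ corresponds, via the evaluation $\pi'_1:\DB_3(1,3)\to\DB_3(3)$, to a distinguished $\cG$-section whose zero locus is $\pi_s(p)$. Uniqueness up to scalar can be established either by a Borel--Weil--Bott vanishing applied to the appropriate $P_{1,3}$-subrepresentation of the spin representation $V_{\omega_3}$, or by comparing the $5$-dimensional family of maximal isotropic planes $\{\pi_s(q):q\in\DB_3(1)\}$ with the expected codimension of their defining $\cG$-sections in $\P(H^0(\cG))$.

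Granting this, pick a lift $\tilde s_0$ of $\bar s_0$ with $\tilde s_0|_{\pi_s(p)}=0$. The compatible projective classes in $\P(V)$ form a $\P^1$ parametrized by $[s+\beta\tilde s_0]$ for $\beta\in\C$, together with the extra point $[\tilde s_0]$. Since the zero locus of $\tilde s_0$ is exactly $\pi_s(p)$, where $s$ does not vanish, $s+\beta\tilde s_0$ is nowhere zero on $\DB_3(1)$ for every $\beta\in\C$, and so yields a smooth hyperplane section of $\DB_3(3)$. On the other hand $\tilde s_0$ vanishes on $\pi_s(p)$ and therefore defines a singular hyperplane section, so $[\tilde s_0]$ must be excluded from $S$. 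This produces $S\cong\P^1\setminus\{\mathrm{pt}\}\cong\C$. The main obstacle I anticipate is the cohomological bound $h^0(\cG\otimes\cI_{\pi_s(p)})\le 1$, for which the geometric identification of $\pi_s(p)$ as a fiber of $\P(\cG)\to\DB_3(1)$ provides the most tractable route.
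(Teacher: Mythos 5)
Your reduction of compatibility to the pointwise condition $\pi_{s'}(p_1)=\pi_s(p_1)$ for all $p_1\in\pi_s(p)$, and hence to the vanishing of $\overline{s'}\in H^0(\DB_3(1),\cG)$ on the plane $\pi_s(p)$, is correct, and it is a genuinely different route from the paper's: the paper instead observes that compatibility means the hyperplane $H'$ must contain $\pi_s(C)$ for the $4$-dimensional cone $C=T_{\DB_3(1),p}\cap\DB_3(1)$, whose linear span is the $\P^5=T_{\DB_3(1),p}$, so that the compatible hyperplanes form the pencil through that $\P^5$, from which only the tangent hyperplane $T_{\DB_3(3),p}$ must be removed. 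That formulation makes the whole lemma an exercise in elementary projective geometry and requires no cohomology. Your route, by contrast, stands or falls on the claim $h^0(\cG\otimes\cI_{\pi_s(p)})=1$, equivalently $h^0(\cS^\vee\otimes\cI_{\pi_s(p)})=1$, i.e.\ that the only sections of the twisted spinor bundle vanishing on a plane of $\DB_3(1)$ are the scalar multiples of the corresponding pure spinor. This is true, but you only name two possible strategies for it without carrying either out, and you yourself flag it as the main obstacle; as written this is a genuine gap at the crux of the argument. (It is also where your approach pays a real price relative to the paper's.)

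There is a second, smaller but real, logical flaw at the end. From ``the zero locus of $\tilde{s}_0$ is exactly $\pi_s(p)$, where $s$ does not vanish'' you infer that $s+\beta\tilde{s}_0$ is nowhere zero. For sections of a bundle of rank $>1$ this inference is invalid: $(s+\beta\tilde{s}_0)(x)=0$ can occur at a point where both $s(x)$ and $\tilde{s}_0(x)$ are nonzero but proportional, so the zero locus of $s+\beta\tilde{s}_0$ is only constrained to lie in the zero locus of $\overline{s}_0$ \emph{as a section of $\cG$}, not of $\tilde{s}_0$ as a section of $\cS^\vee$. To close this you must either show that $Z(\overline{s}_0)\subset\DB_3(1)$ equals $\pi_s(p)$ (not merely contains it), or argue as the paper implicitly does: the line $\overline{[s]\,[\tilde{s}_0]}\subset\P(H^0(\cS^\vee))$ is tangent to the quadric of pure spinors at $[\tilde{s}_0]$, because $[\tilde{s}_0]=[\pi_s(p)]$ lies on the hyperplane section of $\DB_3(3)$ determined by $s$; hence $[\tilde{s}_0]$ is the unique degenerate member of the pencil. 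With these two points repaired your proof is correct and arrives at the same pencil $\{s+\beta\tilde{s}_0\}$ as the paper.
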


\begin{proof}
We have already seen that the required sections are precisely those that contain the image via $\pi_s$ of the intersection of $\DB_3(1)$ with its tangent hyperplane at $p$. In the natural embedding of $\DB_3(3)$ in a $7$-dimensional projective space, they correspond precisely to the hyperplanes, different from $T_{\DB_3(3),p}$,  containing the $5$-dimensional projective space $T_{\DB_3(1),p}$. In particular, they are parametrized by an affine line $\C$.
\end{proof}

For every element $s'\in S$ we have a $\DG_2$-structure in $\DB_3(1)$, a family of flags in $\DB_3(1)$, that we denote $\DG_2(1,2)_{s'}$, providing a $\P^1$-bundle on $Z_{(2121)}$ obtained via the fiber product:
$$
\xymatrix{Z_{(21212)_{s'}}\ar[r]\ar[d]&\DG_2(1,2)_{s'}\ar[d]\\
Z_{(2121)}\ar[r]&\DB_3(1)
}
$$ 
By construction, every $Z_{(21212)_{s'}}$ is given by an element of $E^+(\gamma_2)$, hence, 
from the universal property of the universal family of extensions (cf. \cite{Lg83}) we have a morphism
$$
\psi:S \simeq \C\lra E^+(\gamma_2)\cong\C^k, \quad k \le 0.
$$

\begin{proposition}\label{prop:G2surj}
The map $\psi:S\to E^+(\gamma_2)$ is surjective.
\end{proposition}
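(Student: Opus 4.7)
The strategy mirrors the $\DB_2$ case of Section \ref{ssec:B2}. The cohomological estimate $h^1(Z_{(2121)},e^{K_2})\le 2$ established earlier, together with the quotient by homotheties after imposing the open nonvanishing condition on the repeated curves $\gamma_2$, shows that $E^+(\gamma_2)$ embeds as an open subset of an affine line; write $E^+(\gamma_2)\cong \C^k$ with $k\le 1$. If $k=0$, surjectivity is automatic. If $k=1$, then since $\psi$ is a morphism of affine lines it is a polynomial in one variable over an algebraically closed field, and any non-constant such polynomial is surjective. Hence it suffices to show that $\psi$ is non-constant.

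Non-constancy will follow from injectivity, which I would prove as follows. Suppose for contradiction that distinct $s',s''\in S$ produced isomorphic $\P^1$-bundles $Z_{(21212)_{s'}}$ and $Z_{(21212)_{s''}}$ over $Z_{(2121)}$. By construction, $Z_{(21212)_{s'}}\to Z_{(2121)}$ is the pullback along $\phi:=\pi_2\circ f_{(2121)}:Z_{(2121)}\to \DB_3(1)$ of the Cayley $\P^1$-bundle $\DG_2(1,2)_{s'}=\P(\cC_{s'})\to \DB_3(1)$, and similarly for $s''$. At a general $z\in Z_{(2121)}$ mapping to $p':=\phi(z)\in \DB_3(1)$, the fibers of the two pulled-back bundles over $z$ are precisely the $\P^1$'s of $s'$- and $s''$-isotropic lines through $p'$, embedded inside the $\P^3$-fiber $\DB_3(1,3)|_{p'}$. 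These embedded $\P^1$'s determine and are determined by the restrictions of $\cC_{s'}$ and $\cC_{s''}$ to $p'$, which by the discussion of Section \ref{ssec:G2struct} correspond bijectively to the elements $s',s''\in S$. Hence fiberwise equality over a Zariski-dense subset of $\DB_3(1)$ forces $s'=s''$, a contradiction.

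The main technical obstacle is precisely the passage from this fiberwise description to global non-isomorphism of $\P^1$-bundles on $Z_{(2121)}$: a priori, non-isomorphic bundles on $\DB_3(1)$ might become isomorphic after pullback via $\phi$. To exclude this, I would exploit that $\phi(Z_{(2121)})$ is the $4$-dimensional quadric cone $T_{\DB_3(1),p}\cap \DB_3(1)$ identified in the preceding paragraphs, which is Zariski-dense in a Cartier divisor of $\DB_3(1)$. An isomorphism of the pulled-back bundles gives, over each $z$, an identification of the corresponding embedded $\P^1$'s sitting inside $\DB_3(1,3)|_{\phi(z)}$; since equality as embedded subvarieties over a dense set of $p'$'s forces $\cC_{s'}=\cC_{s''}$, and hence $s'=s''$, this closes the argument. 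Combined with the algebraicity of $\psi$ coming from the universal property of the family of extensions (cf.\ \cite{Lg83}), injectivity implies non-constancy, and we conclude as in the opening paragraph.
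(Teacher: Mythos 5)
Your overall architecture matches the paper's: reduce surjectivity to injectivity (equivalently, non-constancy) of a morphism between affine lines, and prove injectivity by showing that distinct $s',s''\in S$ induce distinct families of isotropic lines fibrewise over the image of $Z_{(2121)}$ in $\DB_3(1)$. But the step you single out as the ``main technical obstacle'' (fibrewise versus global) is not where the difficulty lies --- the paper, as in the $\DB_2$ case, compares the bundles as subfamilies of flags, so pointwise distinctness over a dense set of the base suffices. The genuine gap is in the fibrewise claim itself. You assert that the embedded $\P^1$'s of $s'$- and $s''$-isotropic lines through a general point $p'$ ``correspond bijectively to the elements $s',s''\in S$ by the discussion of Section \ref{ssec:G2struct}''. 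That discussion establishes a bijection between $\DG_2$-structures and smooth hyperplane sections of $\DB_3(3)$; it does \emph{not} say that the isotropic plane $\pi_{s'}(p')$ at a fixed point $p'$ determines $s'$. It cannot in general: the structures form a $7$-dimensional family, while the planes through $p'$ form a $\P^3$. Worse, the elements of $S$ were \emph{defined} by the condition that they all impose the same isotropy on the flags parametrized by $Z_{(2121)}$, which all lie over the cone $C=T_{\DB_3(1),p}\cap\DB_3(1)$; so it is a priori conceivable that two elements of $S$ agree on \emph{all} isotropic flags over $C$, which would make $\psi$ constant and your argument collapse.

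What is needed, and what the paper supplies in one line, is the following: for $s'_1\ne s'_2$ in $S$ with associated hyperplane sections $H_1,H_2\subset\DB_3(3)$, one has $H_1\cap H_2=\pi_s(C)$ (both are $4$-dimensional and the latter is contained in the former), while $\pi_{s'_i}(p')\in H_i\setminus\pi_s(C)$ for a general point $p'$ of $C$; hence $\pi_{s'_1}(p')\ne\pi_{s'_2}(p')$, i.e.\ the two pencils of isotropic lines through $p'$ genuinely differ. With this inserted in place of the unjustified ``bijective correspondence'', your proof closes; without it, it is incomplete at its central point.
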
 

\begin{proof} 
As in the $\DB_2$ case it is enough to show that  $\psi$ is injective. 
Let $s'_1,s'_2\in S$ be two different elements, and let $H_1,H_2$ be the corresponding hyperplane sections of $\DB_3(3)$, whose intersection is the $4$-dimensional quadric cone $\pi_s(C)$, with $C=T_{\DB_3(1),p}\cap \DB_3(1)$. We will show that they provide different $\P^1$-bundles on $Z_{(2121)}$ by interpreting them as two different families of flags point-line in $\DB_3(1)$. 

Since, for every $i=1,2$, the families of $s'_i$-isotropic flags passing by a point $p'$ are contained in the $s'_i$-isotropic plane $\pi_{s'_i}(p')$, it is enough to show that for a general point $p'$ of the  cone $C$ we have  $\pi_{s'_1}(p')\neq\pi_{s'_2}(p')$. And for this it is enough to note that $\pi_{s'_i}(p')\in H_i\setminus \pi_s(C)$. 
\end{proof}

\begin{corollary}\label{cor:G2flagcomp}
The word $\el=(212121)$ is flag--compatible 
with $\DG_2(1,2)$.
\end{corollary}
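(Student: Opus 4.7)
The plan is to verify flag-compatibility at each of the six recursive stages $s = 0,1,\dots,5$ in the construction of $Z_\el$, splitting them into a generic case handled by a one-dimensional cohomology bound and the single critical case $s=1$, which is treated via Proposition \ref{prop:G2surj}. The argument closely parallels the treatment of the $\DB_2$ case sketched in Subsection \ref{ssec:B2}.

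First I would apply Proposition \ref{prop:descent} iteratively, in the style of the displayed computation at the start of Subsection \ref{ssec:BSG2}, to verify that at every stage other than $s=1$ one has $h^1(Z_{\el[s+1]}, e^{K_{l_{r-s}}}) \le 1$. At each such stage the locus of cocycles non-trivial on the repeated curves is a non-empty open subset of a one-dimensional vector space, so upon passing to the quotient by homotheties only a single class survives, and the resulting $\P^1$-bundle must coincide with the standard Bott--Samelson variety $Z_{\el[s]}$.

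At the critical stage $s=1$, Proposition \ref{prop:G2surj} shows that every class $\theta\in E^+(\gamma_2)$ is the image of some $s'\in S$ under $\psi$, hence is realised as the fibre product
$$
Z_{(21212)_{s'}} = Z_{(2121)} \times_{\DB_3(1)} \DG_2(1,2)_{s'},
$$
obtained by pulling back the $\P^1$-bundle $\DG_2(1,2)_{s'}\to \DB_3(1)$ through the evaluation $f_{(2121)}$. Now for any $s'\in S$ the variety $\DG_2(1,2)_{s'}$ is, by the construction in Subsection \ref{ssec:G2struct}, the projectivization of a Cayley bundle on $\DB_3(1)$, and is therefore isomorphic as an abstract variety to the standard complete flag $\DG_2(1,2)$. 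Consequently $Z_{(21212)_{s'}}$ is a Bott--Samelson variety for the reduced word $(2,1,2,1,2)$ in $\DG_2(1,2)$, and by homogeneity it is isomorphic to $Z_{(21212)}$ independently of the initial basepoint.

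The principal point to watch is the dichotomy between bundle and variety isomorphism, already stressed in the $\DB_2$ discussion: the family $E^+(\gamma_2)\cong\C$ yields pairwise non-isomorphic $\P^1$-bundles over $Z_{(2121)}$, yet their total spaces coincide abstractly because they all arise as Bott--Samelson varieties for the same word inside the same abstract flag manifold, realised via different $\DG_2$-substructures of the ambient $\DB_3$-variety. Accordingly, the equivalences between the total spaces are not induced by automorphisms of $Z_{(2121)}$, but only by the underlying identification of the different $\DG_2$-flag manifolds $\DG_2(1,2)_{s'}$ as one and the same rational homogeneous variety.
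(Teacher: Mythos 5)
Your proposal is correct and follows essentially the same route as the paper: the corollary is obtained there as an immediate consequence of the displayed descent computation (showing the only possible excess of extensions is $h^1(Z_{\el[2]},e^{K_2})\le 2$) together with Proposition \ref{prop:G2surj}, with the isomorphism of the total spaces $Z_{(21212)_{s'}}$ coming exactly from the fact that all Cayley-bundle projectivizations $\DG_2(1,2)_{s'}$ are abstractly the same flag manifold. Your explicit spelling out of the bundle-versus-variety dichotomy matches the paper's intended reading.
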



\section{Geometry of rational homogeneous varieties of type $\DF_4$}\label{sec:preliminaries}

Along the rest of the paper we will consider the rational homogeneous manifolds of type $\DF_4$. Let us recall that the Lie algebra $\ff$ has dimension $52$ and rank $4$, and that it has a unique associated semisimple algebraic group $\DF_4$, which is simply connected.
We will use the notation introduced in Section \ref{sec:notn}. The set of nodes of the Dynkin diagram of $\DF_4$, ordered from left to right, is $D=\{1,2,3,4\}$
$$
\ifx\du\undefined
  \newlength{\du}
\fi
\setlength{\du}{3.3\unitlength}
\begin{tikzpicture}
\pgftransformxscale{1.000000}
\pgftransformyscale{1.000000}

\definecolor{dialinecolor}{rgb}{0.000000, 0.000000, 0.000000} 
\pgfsetstrokecolor{dialinecolor}
\definecolor{dialinecolor}{rgb}{0.000000, 0.000000, 0.000000} 
\pgfsetfillcolor{dialinecolor}


\pgfsetlinewidth{0.300000\du}
\pgfsetdash{}{0pt}
\pgfsetdash{}{0pt}

\pgfpathellipse{\pgfpoint{-6\du}{0\du}}{\pgfpoint{1\du}{0\du}}{\pgfpoint{0\du}{1\du}}
\pgfusepath{stroke}
\node at (-6\du,0\du){};

\pgfpathellipse{\pgfpoint{4\du}{0\du}}{\pgfpoint{1\du}{0\du}}{\pgfpoint{0\du}{1\du}}
\pgfusepath{stroke}
\node at (4\du,0\du){};

\pgfpathellipse{\pgfpoint{14\du}{0\du}}{\pgfpoint{1\du}{0\du}}{\pgfpoint{0\du}{1\du}}
\pgfusepath{stroke}
\node at (14\du,0\du){};

\pgfpathellipse{\pgfpoint{24\du}{0\du}}{\pgfpoint{1\du}{0\du}}{\pgfpoint{0\du}{1\du}}
\pgfusepath{stroke}
\node at (24\du,0\du){};

\pgfsetlinewidth{0.300000\du}
\pgfsetdash{}{0pt}
\pgfsetdash{}{0pt}
\pgfsetbuttcap

{\draw (-5\du,0\du)--(3\du,0\du);}
{\draw (15\du,0\du)--(23\du,0\du);}
{\draw (4.65\du,0.7\du)--(13.35\du,0.7\du);}
{\draw (4.65\du,-0.7\du)--(13.35\du,-0.7\du);}


{\pgfsetcornersarced{\pgfpoint{0.300000\du}{0.300000\du}}\definecolor{dialinecolor}{rgb}{0.000000, 0.000000, 0.000000}
\pgfsetstrokecolor{dialinecolor}
\draw (7\du,-1.2\du)--(10.8\du,0\du)--(7\du,1.2\du);}

\node[anchor=west] at (28\du,0\du){${\rm F}_4$};

\node[anchor=south] at (-6\du,1.1\du){$\scriptstyle 1$};

\node[anchor=south] at (4\du,1.1\du){$\scriptstyle 2$};

\node[anchor=south] at (14\du,1.1\du){$\scriptstyle 3$};

\node[anchor=south] at (24\du,1.1\du){$\scriptstyle 4$};

\end{tikzpicture} 
$$
With this ordering, the corresponding Cartan matrix is equal to:
$$
(-K_i\cdot\Gamma_j)=\left(\begin{array}{cccc}2&-1&0&0\\-1&2&-2&0\\0&-1&2&-1\\0&0&-1&2\end{array}\right).
$$

Later on we will pay special attention to the the variety $\DF_4(1)$, which is the homogeneous contact manifold of type $\DF_4$; it has Picard number one and dimension $15$.  We will look at the rest of homogeneous manifolds of type $\DF_4$ in terms of their relation with respect to $\DF_4(1)$. 

\subsection{Vertical isotropic spaces}\label{ssec:vertiso}

In this section we will describe the contractions  
$$\pi_{234}:\DF_4(D)\to \DF_4(1),\quad \pi_{14,4}:\DF_4(1,4)\to\DF_4(1),$$
interpreting every fiber $\pi_{234}^{-1}(x)$ as a variety of isotropic flags of the $5$-dimensional projective space $\pi_{14,4}^{-1}(x)$, that is as a complete flag manifold of type $\DC_3$. Let us start by proving the existence of a vertical skew-symmetric form, with respect to which we will define a notion of isotropy: 

\begin{lemma}\label{lem:rank6bdl}
There exists a vector bundle $\cE$ of rank $6$ on $\DF_4(1)$ and a skew-symmetric isomorphism $\cE\cong\cE^\vee\otimes H_1$ whose projectivization is isomorphic to $\DF_4(1,4)$, so that its contraction to $\DF_4(1)$ is the natural projection. 
\end{lemma}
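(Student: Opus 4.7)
The plan is to realize $\cE$ as an associated vector bundle to a $P_1$-representation, where $\DF_4(1) = \DF_4/P_1$, and then to exhibit the symplectic structure using the standard representation of the semisimple part of the Levi of $P_1$.

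First, I would observe that, since $P_{1,4}$ contains the unipotent radical of $P_1$, the fiber of $\pi_{14,4}$ is $P_1/P_{1,4} \cong L_1/(L_1 \cap P_{1,4})$, with $L_1$ the Levi of $P_1$. Inspecting the Cartan matrix displayed at the start of this section, the Dynkin subdiagram on $\{2,3,4\}$ has one long simple root ($\alpha_2$) and two short ones ($\alpha_3, \alpha_4$), so it is of type $\DC_3$; hence the semisimple part of $L_1$ is $\SP_6$, with $6$-dimensional standard representation $V$. The intersection $L_1 \cap P_{1,4}$ is the stabilizer of a line in $V$, so the fiber is $\P(V) \cong \P^5$. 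Extending $V$ to a $P_1$-module via $P_1 \twoheadrightarrow L_1$ (with a central character to be fixed below) and setting $\cE := \DF_4 \times^{P_1} V$, one obtains a rank $6$ vector bundle with $\P(\cE) = \DF_4 \times^{P_1} \P(V) = \DF_4/P_{1,4} = \DF_4(1,4)$, the projection being $\pi_{14,4}$.

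Next, the $\SP_6$-invariant symplectic form $\omega \in \wedge^2 V^\vee$ is semi-invariant under the central $\C^*$ of $L_1$, giving a nondegenerate $P_1$-equivariant skew pairing $V \otimes V \to \C_\chi$ for some character $\chi$; passing to associated bundles yields the desired skew isomorphism $\cE \cong \cE^\vee \otimes \cL_\chi$, where $\cL_\chi \in \Pic(\DF_4(1)) = \Z \cdot H_1$ is the line bundle induced by $\chi$. Writing $\cL_\chi = \cO(kH_1)$, taking determinants in the isomorphism gives $\det \cE = 3kH_1$; moreover, replacing the chosen central character shifts $\cE$ by $\cO(mH_1)$ and $k$ by $k + 2m$, so the parity of $k$ is intrinsic. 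An explicit computation in the $\DF_4$ weight lattice then shows this parity is odd, so a unique normalization produces $\cL_\chi = \cO(H_1)$.

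The hard part will be this final parity verification: although the existence of \emph{some} skew isomorphism $\cE \cong \cE^\vee \otimes \cL_\chi$ follows formally from the $\SP_6$-invariance of $\omega$, pinning down the twist as exactly $\cO(H_1)$ rather than some $\cO((2j+1)H_1)$ for other $j \in \Z$ requires the explicit computation of the central $L_1$-weight of the standard $\SP_6$-representation inside the $\DF_4$ fundamental-weight basis.
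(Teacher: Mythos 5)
Your overall strategy is sound and genuinely different from the paper's: you build $\cE$ as an associated bundle for a lift of the standard $\SP_6$-representation $V$ of the semisimple part of the Levi $L_1$, whereas the paper defines $\cE$ directly as $\pi_*(H_4)$ (using that $H_4$ is unisecant on the $\P^5$-fibers of $\DF_4(1,4)\to\DF_4(1)$) and obtains the skew form from the fact that $\DF_4(D)\to\DF_4(1)$ is a $\DC_3$-bundle. Up to and including the reduction to a parity statement, your argument is correct: the lifts of $V$ to $P_1$-modules form a torsor under $X(L_1)\cong\Pic(\DF_4(1))=\Z H_1$, twisting by $\cO(mH_1)$ replaces $k$ by $k+2m$, and $k$ odd is exactly what is needed to normalize $\cL_\chi=\cO(H_1)$.

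The genuine gap is that the decisive step --- showing the parity of $k$ is odd --- is only asserted, not carried out, and it is the entire nontrivial content of the lemma: if $k$ were even the best one could achieve is $\cE\cong\cE^\vee$, and the statement would fail. You should either perform the announced weight computation (the sum of the $T$-weights of $V$, restricted to the center of $L_1$ and expressed in the $\omega_1$-basis, must be an odd multiple of $3\omega_1$), or observe that the paper's restriction argument supplies the parity for free and is considerably lighter: restricting to a line $\ell\subset\DF_4(1)$, the tagged $\DC_3$-diagram with tags $(1,0,0)$ forces the splitting type of $\cE|_\ell$ to be $(a^3,(a+1)^3)$, and $a=H_4\cdot\Gamma_1=0$ identifies $\cE|_\ell\cong\cO_\ell^{\oplus 3}\oplus\cO_\ell(1)^{\oplus 3}$; hence $\det\cE\cdot\ell=3$, so in your notation $3k=3$ and $k=1$ directly, with no need to pass through the $\DF_4$ weight lattice. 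Until one of these computations is actually done, the proposal proves only $\cE\cong\cE^\vee\otimes\cO(kH_1)$ for some $k$ of well-defined but undetermined parity.
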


\begin{proof}
Note first that the fibers of $\DF_4(1,4)\to \DF_4(1)$ are $\P^5$'s, on which the images of the curves in the class $\Gamma_4$ are lines. In particular, the  pullback of the ample generator $H_4$ of $\Pic(\DF_4(4))$  is unisecant (see Remark \ref{rem:Picgens}), and then $\DF_4(1,4)$ is the projectivization of the rank $6$ vector bundle, $\cE:=\pi_*(H_4)$. The fact that $\DF_4(D)\to \DF_4(1)$ is a $\DC_3$-bundle implies that the vector bundle $\cE$ admits an everywhere non degenerate skew-symmetric form on each fiber, given by an isomorphism:
$$
\eta:\cE\lra\cE^\vee\otimes\cL,
$$
for a certain line bundle $\cL$ in $\DF_4(1)$. In order to compute $\cL$, we will consider the restriction to a line $\ell$ in $\DF_{4}(1)$, and use the interpretation of Grothendieck theorem for flag bundles over $\P^1$ that we have established in \cite[Section 3.3]{OSW}: this tells us that the the restriction of $\DF_4(D)\to\DF_4(1)$ to $\ell$ is determined by the following tagged Dynkin diagram of type  $\DC_3$: 
$$
\ifx\du\undefined
  \newlength{\du}
\fi
\setlength{\du}{3.3\unitlength}
\begin{tikzpicture}
\pgftransformxscale{1.000000}
\pgftransformyscale{1.000000}

\definecolor{dialinecolor}{rgb}{0.000000, 0.000000, 0.000000} 
\pgfsetstrokecolor{dialinecolor}
\definecolor{dialinecolor}{rgb}{0.000000, 0.000000, 0.000000} 
\pgfsetfillcolor{dialinecolor}


\pgfsetlinewidth{0.300000\du}
\pgfsetdash{}{0pt}
\pgfsetdash{}{0pt}

\pgfpathellipse{\pgfpoint{6\du}{0\du}}{\pgfpoint{1\du}{0\du}}{\pgfpoint{0\du}{1\du}}
\pgfusepath{fill}
\node at (6\du,0\du){};

\pgfpathellipse{\pgfpoint{-4\du}{0\du}}{\pgfpoint{1\du}{0\du}}{\pgfpoint{0\du}{1\du}}
\pgfusepath{stroke}
\node at (-4\du,0\du){};

\pgfpathellipse{\pgfpoint{-14\du}{0\du}}{\pgfpoint{1\du}{0\du}}{\pgfpoint{0\du}{1\du}}
\pgfusepath{stroke}
\node at (-14\du,0\du){};

\pgfsetlinewidth{0.300000\du}
\pgfsetdash{}{0pt}
\pgfsetdash{}{0pt}
\pgfsetbuttcap

{\draw (5\du,0\du)--(-3\du,0\du);}
{\draw (-4.65\du,0.7\du)--(-13.35\du,0.7\du);}
{\draw (-4.65\du,-0.7\du)--(-13.35\du,-0.7\du);}


{\pgfsetcornersarced{\pgfpoint{0.300000\du}{0.300000\du}}\definecolor{dialinecolor}{rgb}{0.000000, 0.000000, 0.000000}
\pgfsetstrokecolor{dialinecolor}
\draw (-10.8\du,-1.2\du)--(-7\du,0\du)--(-10.8\du,1.2\du);}


\node[anchor=south] at (6\du,1.1\du){$\scriptstyle 0$};

\node[anchor=south] at (-4\du,1.1\du){$\scriptstyle 0$};

\node[anchor=south] at (-14\du,1.1\du){$\scriptstyle 1$};

\end{tikzpicture} 
$$
Following \cite[Example 3.19]{OSW}, this implies that its splitting type is of the form $(a^3,(a+1)^3)$, for a certain integer $a$. Note that a minimal section of $\P(\cE)$ over $\ell$ is given by a quotient $\cE_{|\ell}\to \cO_\ell(a)$. Since on the other hand we know that these minimal sections are images of curves in the class $\Gamma_1$ in $\DF_4(D)$, we have $a=H_4\cdot\Gamma_1=0$. We conclude that $\cL=H_1.$
\end{proof}

\begin{definition}\label{def:isotr}
Given a morphism $f:Y\to \DF_4(1)$ from a variety $Y$, the above isomorphism extends to an skew-symmetric isomorphism $\eta:f^*\cE\to f^*\cE\otimes f^*H_1$. In particular given a projective subbundle of $\P(\cF)\subset\P(f^*\cE)$, given by a surjection $f^*\cE\to\cF$ onto a vector bundle $\cF$ on $Y$, with kernel $\cG$, we may define its {\em orthogonal subbundle with respect to $\eta$} by:
$$\P(\cF)^{\perp}:=\P(\cG^\vee(1)),
$$
considered as a projective subbundle of $\P(\cE)$ via the surjection $$\cE\stackrel{\eta}{\lra}\cE^{\vee}(1)\lra\cG^\vee(1).$$ 
If $\P(\cF)\subseteq\P(\cF)^\perp$, we say that the bundle $\P(\cF)$ is {\em isotropic with respect to $\eta$}. In the case in which $f$ is the inclusion of a closed point $x$ of $\DF_4(1)$, we will say that $\P(\cF)$ is an {\em isotropic subspace} of the fiber of $\DF_4(1,4)\to \DF_4(1)$ over $x$. 
\end{definition}

Now we may interpret the fiber $\pi_{234}^{-1}(x)$ of $\DF_4(D)\to\DF_4(1)$ over a point $x$ as the variety of {\em flags of isotropic subspaces} in $\pi_{14,4}^{-1}(x)\cong\P^5$: 
$$
\pi_{234}^{-1}(x)=\left\{(p,\ell,\pi)\left|\,\,\begin{array}{l}p,\ell,\pi\subset\pi_{14,4}^{-1}(x)\mbox{ proj. subspaces of $\dim$. $0,1,2$, resp.}\\p\in\ell\subset\pi=\pi^\perp\subset\ell^\perp\subset p^\perp\end{array}\right.\right\}.
$$
This variety of flags is isomorphic to a complete flag manifold of type $\DC_3$.
Furthermore, we may describe explicitly the fibers of the elementary contractions $\pi_i$, $i=2,3,4$, as curves in the fibers $\pi_{234}^{-1}(x)$:
\begin{itemize}
\item A fiber $\gamma_2$ of $\pi_2$ contained in $\pi_{234}^{-1}(x)$ is determined by the choice of an isotropic line $\ell_0\subset\ell_0^\perp$ in $\pi_{14,4}^{-1}(x)$ and a point $p_0\in\ell_0$:
$$
\gamma_2=\left\{(p_0,\ell_0,\pi)\left|\,\,p_0\in\ell_0\subset\pi=\pi^\perp\right.\right\}.
$$
\item A fiber $\gamma_3$ of $\pi_3$  contained in $\pi_{234}^{-1}(x)$ is determined by the choice of an isotropic plane $\pi_0=\pi_0^\perp$  in $\pi_{14,4}^{-1}(x)$, and a point $p_0\in\pi_0$:
$$
\gamma_3=\left\{(p_0,\ell,\pi_0)\left|\,\,p_0\in\ell\subset\pi_0\right.\right\}.
$$
\item Finally, a fiber $\gamma_4$ is determined by an isotropic line $\ell_0\subset\ell_0^\perp$ and an isotropic plane $\pi_0=\pi_0^\perp$ containing $\ell_0$:
$$
\gamma_4=\left\{(p,\ell_0,\pi_0)\left|\,\,p\in\ell_0\subset\pi_0\right.\right\}.
$$
\end{itemize}   

\begin{remark}\label{rem:isolines}
Note that the curves of type $\gamma_4$ get mapped to lines in the fibers of $\DF_4(1,4)\to\DF_4(1)$. Their images are parametrized by the variety $\DF_4(1,3)$, so that the corresponding universal family and evaluation morphism are:
$$
\xymatrix{&\DF_{4}(1,3,4)\ar[dr]\ar[dl]&\\\DF_{4}(1,3)&&\DF_{4}(1,4)}
$$  
In this way, we will call $\DF_{4}(1,3,4)\to\DF_4(1,3)$ the {\em universal family of isotropic lines in $\DF_4(1,4)\to\DF_4(1)$}. 
\end{remark}

\subsection{Flags on $\bm{\DF_4(1)}$}\label{ssec:flagsF41}

Later on, we will need to think of the varieties $\DF_4(i)$ as parameter spaces of subvarieties of $\DF_4(1)$.
By looking at the fibers of the contractions of the varieties of the form $\DF_4(1,i)$, one may easily interpret the rest of varieties of type $\DF_4$ of Picard number one, as families of subvarieties in $\DF_4(1)$:
\begin{itemize}
\item $\DF_4(2)$ parametrizes lines in $\DF_4(1)$; furthermore, one may prove that it parametrizes all the lines in $\DF_4(1)$, and the subfamily of lines passing by a given point is isomorphic to a variety of type $\DC_3(3)\cong\Q^6$ (see \cite[Theorems 4.3 and 4.8]{LM})
\item $\DF_4(3)$ parametrizes a family of planes in $\DF_4(1)$, so that there the subfamily of planes passing by a given point is isomorphic to a variety $\DC_3(2)$ (a Lagrangian Grassmannian of isotropic lines in $\P^5$), and the subfamily of planes containing a line is isomorphic to $\P^2$.   
\item $\DF_4(4)$ parametrizes a family of smooth $5$-dimensional quadrics $\Q^5\cong\DB_3(1)$ contained in $\DF_4(1)$. As in the previous case, we may identify the subfamilies containing a given point --$\P^5\cong\DC_3(1)$--, a given line --$\P^2$-- or a plane of the family $\DF_4(3)$; in this last case, for instance, looking at the contractions 
$$
\xymatrix{&\DF_{4}(3,4)\ar[dr]\ar[dl]&\\\DF_{4}(3)&&\DF_{4}(4)}
$$
we may claim that the subfamily in question is parametrized by the image into $\DF_4(4)$ of a fiber of the contraction $\DF_4(3,4)\to\DF_4(3)$, which is a line in $\DF_4(4)$, image of a curve of numerical class $\Gamma_4$.  
\end{itemize}

\section{Reduced words of maximal length for ${\DF_4}$}\label{sec:redwords}

%

As in the cases $\DB_2, \DG_2$ (Section \ref{sec:B2}), we will consider an appropriately chosen reduced word $\el_0$ of maximal length for $\DF_4$, which will be shown to be flag--compatible in the following sections.
More specifically, in this section we will use the descent rules (see Proposition \ref{prop:descent})  --which we make more explicit for the case  $\DF_4$-- in order to show that, for this word, we have excess of extensions only at one step. 

We begin by observing that only the  groups of type $H^1(Z_{\el'}, e^{K_2})$ may give
 excess of extensions, in the following sense:

\begin{lemma}\label{lem:only2}
Let $\el$ be a reduced word of $\DF_4$,  with final letter different from $2$. Then, if $\el[1]$ is flag--compatible, also $\el$ is flag--compatible.
\end{lemma}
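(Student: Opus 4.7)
The plan is to reduce the lemma to a cohomological bound. Since $\el[1]$ is flag--compatible by hypothesis, every Bott--Samelson variety $Z_{\el[s]}$ with $s \geq 1$ already has the property that any deformation by a cocycle non--trivial on repeated curves is isomorphic to $Z_{\el[s]}$; it thus suffices to verify this property at $s=0$. The cocycle $\zeta_\el$ defining $Z_\el$ as a $\P^1$--bundle over $Z_{\el[1]}$ lives in $H^1(Z_{\el[1]}, e^{K_{l_r}})$ with $l_r \in \{1,3,4\}$, and the key claim is
\[
h^1(Z_{\el[1]}, e^{K_{l_r}}) \leq 1.
\]
Once this is proved, any two non--zero cocycles are proportional, hence define equivalent extensions of $\cO_{Z_{\el[1]}}$ by $e^{K_{l_r}}$; their projectivizations are then the same $\P^1$--bundle over $Z_{\el[1]}$, so every deformation of $Z_\el$ by a non--trivial cocycle is isomorphic to $Z_\el$, completing the flag--compatibility of $\el$.

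To prove the cohomological bound I would apply Proposition \ref{prop:descent} recursively, peeling off the letters of the word $\el[1]$. The crucial numerical observation is that in the Cartan matrix of $\DF_4$ the entry $-K_i\cdot\Gamma_j=2$ appears only in the case $(i,j)=(2,3)$; hence for $l_r \in \{1,3,4\}$ the intersection $K_{l_r}\cdot\Gamma_j$ belongs to $\{-2,0,1\}$. Accordingly, the descent on the last letter $j$ of the word falls into three situations: if $j = l_r$, rule $(-)$ reduces $h^1$ to $h^0(Z_{\el[2]}, \cO)=1$; if $K_{l_r}\cdot\Gamma_j=0$, rule $(0)$ preserves $h^1$ and we iterate; and if $K_{l_r}\cdot\Gamma_j=1$, rule $(+)$ yields a two--term bound, with an auxiliary contribution $h^1(Z_{\el[2]}, e^{K_{l_r}+K_j})$ to be handled separately. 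The auxiliary bundle satisfies $(K_{l_r}+K_j)\cdot\Gamma_{l_r}=-1$, which forces vanishing by rule $(-)$ whenever $l_r$ reappears further down in the descent.

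The hard part will be the combinatorial bookkeeping of the tree of auxiliary bundles generated by successive applications of rule $(+)$. These bundles take the form $K_{l_r}+\sum_j a_jK_j$ with $a_j\geq 0$ and may occasionally reach intersection $2$ with some $\Gamma_j$ through the pair $(2,3)$, triggering three--term splittings. I would need to verify that in every branch of the descent tree the auxiliary bundle eventually develops a negative intersection with a later letter, forcing vanishing by rule $(-)$, and that at most a single contribution of $1$ survives in the total sum. This case--by--case verification relies on the reducedness of $\el$, which excludes forbidden letter configurations, together with the hypothesis $l_r\neq 2$, which prevents the initial cohomology from involving the double edge of the Dynkin diagram at the very first step.
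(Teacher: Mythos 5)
Your overall strategy coincides with the paper's: reduce flag--compatibility at the last step to a bound on $h^1(Z_{\el[1]},e^{K_{l_r}})$ for $l_r\in\{1,3,4\}$, exploiting the fact that the only off--diagonal entry $-2$ in the Cartan matrix of $\DF_4$ sits at position $(2,3)$, so that $K_{l_r}$ has degree at most $1$ on every class $\Gamma_j$ when $l_r\neq 2$. However, two things separate your proposal from a complete proof. First, the reduction is not quite right when the final letter $l_r$ does not occur earlier in the word: in that case there are no repeated curves, the condition ``non trivial on repeated curves'' is vacuously satisfied by \emph{every} cocycle including $\zeta=0$, and proportionality of nonzero cocycles no longer suffices --- the split extension and a nonsplit one would both be admissible deformations. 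What is needed there is $h^1(Z_{\el[1]},e^{K_{l_r}})=0$, not merely $\le 1$. The paper's proof states exactly this dichotomy: the bound is $\le 1$ if the letter is repeated and $=0$ otherwise.

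Second, and more substantially, the key cohomological bound is never actually established in your write-up. You lay out the descent tree generated by rules $(\Minus)$, $(0)$, $(\Plus)$ and then explicitly defer ``the combinatorial bookkeeping,'' i.e.\ the verification that every auxiliary branch eventually acquires negative degree on a later letter and that at most a single contribution of $1$ survives. That verification is precisely the content of the result the paper invokes at this point, namely \cite[Corollary 4.5]{OSWW}; the paper does not redo the computation but cites it. As written, your argument is a plan for reproving that corollary rather than a proof of the lemma, and until that case analysis (or the citation) is supplied, the claimed inequality $h^1(Z_{\el[1]},e^{K_{l_r}})\le 1$ remains unproven.
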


 \begin{proof}
If $\el=(l_1,\dots,l_{i}=:j)$ with $j=1,3,4$, then one has to check that $h^1(Z_{\el[s+1]},e^{K_j})\leq 1$ if there exists $k<i$ such that $l_k=l_i$, and $h^1(Z_{\el[s+1]},e^{K_j})=0$ otherwise. This follows from \cite[Corollary 4.5]{OSWW}.
 \end{proof}

In other words, we only need to take care of the steps of the Bott--Samelson construction in which the last letter of the word $\el[s]$ is equal to $2$. 
Unfortunately, one can check (by using Sage computer software, for instance) the following
\begin{fact}\label{fact:excess}
For every reduced word $\el$ of maximal length, there exists a subword $\el[s]$, finished in $j=2$, satisfying that $$h^1(Z_{\el[s+1]},e^{K_2})\geq 2.$$ 
\end{fact}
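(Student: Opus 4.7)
The plan is to treat this statement as a finite combinatorial verification, framed by the descent rules (Proposition \ref{prop:descent}) together with Lemma \ref{lem:only2}. By the latter, the only potentially problematic steps are subwords $\el[s]$ whose last letter is $2$, so it suffices to exhibit, for each reduced word $\el$ of maximal length, at least one such position $s$ where $h^1(Z_{\el[s+1]}, e^{K_2}) \geq 2$.

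First I would set up an algorithm that, for a given reduced word $\el$, iteratively computes (or sharply bounds) the groups $h^i(Z_{\el[s]}, e^L)$ for each $s$ and each relevant line bundle $L$ on $X = \DF_4(D)$. Starting from $Z_{\el[r]} = \{x\}$, where $h^0$ is $1$ on the trivial line bundle and $0$ otherwise, one works back up by applying the three cases $(-)$, $(0)$, $(+)$ of Proposition \ref{prop:descent}. Since rule $(0)$ is an equality and the other rules become tight when the fiber degrees of $L$ lie in controlled ranges, one obtains not only upper bounds but also matching lower bounds for the configurations of interest here. In particular, repeated application of rule $(-)$ when $L\cdot \Gamma_{l_r}=-2$ reduces $h^1$ to an $h^0$ computation on $Z_{\el[1]}$, where the dimension can be computed exactly.

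Next, I would identify the structural source of the excess. The double-bond Cartan entry $-K_2 \cdot \Gamma_3 = -2$ means that pushing $e^{K_2}$ across a $\P^1$-bundle of type $3$ produces three summands rather than two, so in subwords of the form $(\ldots, 3, 2)$ or $(\ldots, 2, 3, 2)$ the computation of $h^1(Z_{\el[s+1]}, e^{K_2})$ naturally picks up an extra dimension compared with the simply laced case. The length of $w_0 \in W(\DF_4)$ is $24$, and every reduced expression of $w_0$ contains exactly six occurrences of $s_2$ and six of $s_3$, so by a pigeonhole argument the letters $2$ and $3$ must interleave enough times for such a pattern to appear at least once as a terminal subword.

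The main obstacle is that this argument only shows the \emph{plausibility} of the excess; rigorously verifying the inequality for every reduced expression of $w_0$ in $W(\DF_4)$ amounts to enumerating a large (but finite) set of words and running the descent algorithm on each. As the authors note, this is most efficiently carried out by a computer algebra system such as Sage, which generates reduced words via braid moves and iterates the rules of Proposition \ref{prop:descent}. A fully human-readable proof would require a systematic combinatorial case analysis, identifying in every reduced word the specific interleaved block of $s_2$'s and $s_3$'s that forces $h^1 \geq 2$, and then propagating the lower bound across the remainder of the word using rule $(0)$ and the tight instances of $(-)$; while this is plausible from the structural observation above, executing it in closed form without exhaustive enumeration is the main difficulty, and explains why the statement is recorded here as a computer-verified Fact.
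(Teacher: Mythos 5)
Your overall plan---reduce the statement to a finite verification over all reduced expressions of the longest element and delegate it to a computer---is exactly what the paper does: Fact~\ref{fact:excess} is stated without proof, accompanied only by the remark that ``one can check (by using Sage computer software, for instance)''. So in spirit you and the authors take the same route, and your identification of the double bond (the entry $-K_2\cdot\Gamma_3=-2$) as the structural source of the extra summand is the right intuition.

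Two concrete points in your write-up would fail if executed, however. First, the claim that every reduced expression of $w_0$ contains exactly six occurrences of $s_2$ and six of $s_3$ is false: the braid relations $s_1s_2s_1=s_2s_1s_2$ and $s_3s_4s_3=s_4s_3s_4$ in $\DF_4$ have odd order and change the individual letter counts (only the totals over $\{1,2\}$ and over $\{3,4\}$, twelve each, are invariant). The word $\el_0$ of Section~\ref{ssec:theword} itself contains eight $2$'s and seven $3$'s, so the pigeonhole step built on that count has no content as stated. Second, and more substantively, Proposition~\ref{prop:descent} is an equality only in case $(0)$ and in the extremal instances of $(\Minus)$; case $(\Plus)$ is a genuine inequality coming from a filtration of the push-forward bundle. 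Since the Fact asserts a \emph{lower} bound $h^1\geq 2$, the descent rules alone can never certify it: the verification must compute the relevant cohomology groups exactly (via the explicit extensions defining the push-forwards, or Demazure-type character formulas), not merely bound them from above. Your assertion that the rules ``become tight\dots for the configurations of interest'' is precisely the point requiring proof and is not supplied. Neither issue changes the verdict that the statement is, as in the paper, ultimately a finite computation, but as written your argument does not yet establish the inequality.
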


\subsection{Descent rules for $\bm{\DF_4}$}\label{ssec:descent}

In order to search efficiently within the set of reduced words of maximal length for $\DF_4$, we will start by rewriting our descent rules in the case of groups of the form $H^1(Z_{\el[1]},e^{K_2})$.  Let us start by introducing the following divisors (in the table the degrees are the degrees of the divisors on the curves $\Gamma_1, \dots, \Gamma_4$):\par
\medskip
\noindent
\def\arraystretch{1.2}
\scalebox{0.87}{
\begin{tabular}{|c|c|c||c|c|c|}
\hline
$e^L$&$L$&degrees&$e^{L'}$&$L'$&degrees\\\hline
$A$&$K_2$&$(1,-2,2,0)$&$A'$&$\cO$&$(0,0,0,0)$\\\hline
$B$&$K_2+2K_3$&$(1,0,-2,2)$&$B'$&$K_2+K_3$&$(1,-1,0,1)$\\\hline
$C$&$K_1+K_2+2K_3$&$(-1,1,-2,2)$&$C'$&$K_1+K_2+K_3$&$(-1,0,0,1)$\\\hline
$D$&$K_2+2K_3+2K_4$&$(1,0,0,-2)$&$D'$&$K_2+2K_3+K_4$&$(1,0,-1,0)$\\\hline
$E$&$K_1+K_2+2K_3+2K_4$&$(-1,1,0,-2)$&$E'$&$K_1+K_2+2K_3+K_4$&$(-1,1,-1,0)$\\\hline
$F$&$K_1+2K_2+2K_3+2K_4$&$(0,-1,2,-2)$&$F'$&$K_1+2K_2+2K_3+K_4$&$(0,-1,1,0)$\\\hline
$G$&$K_1+2K_2+4K_3+2K_4$&$(0,1,-2,0)$&$G'$&$K_1+2K_2+3K_3+2K_4$&$(0,0,0,-1)$\\\hline
&&&$H'$&$K_2+K_3+K_4$&$(1,-1,1,-1)$\\\hline
&&&$I'$&$K_1+K_2+K_3+K_4$&$(-1,0,1,-1)$\\
\hline
\end{tabular}}\par\medskip

Let us consider a reduced word $\el=(l_1,\dots,l_{24})$ of maximal length  for $\DF_4$, and a subword $\el'=(l_1,\dots,l_{i}=:j)$. Then we can give the following precise description of the descent rules given in Proposition \ref{prop:descent}:

\begin{itemize}
\item[(C)] (change of degree) 
$$
h^1(Z_{\el'},e^L)=h^0(Z_{\el'[1]},e^{L'})\quad\mbox{denoted by}\quad e^L\stackrel{j}{\lra}e^{L'}$$
in the following cases:
$$
\xymatrix{A\ar[d]^2&B\ar[d]^3&C\ar[d]^3&D\ar[d]^4&E\ar[d]^4&F\ar[d]^4&G\ar[d]^3\\A'&B'&C'&D'&E'&F'&G'}
$$
\item[(V1)] (vanishing, degree 1)
$$
h^1(Z_{\el'},e^L)=0\quad\mbox{denoted by}\quad e^L\stackrel{j}{\lra}0$$
in the following cases:
$$
\xymatrix{C\ar[d]^1&E\ar[d]^1&F\ar[d]^2\\0&0&0}
$$
\item[(V0)] (vanishing, degree 0)
$$
h^0(Z_{\el'},e^{L'})=0\quad\mbox{denoted by}\quad e^{L'}\stackrel{j}{\lra}0$$
in the following cases:
$$
\xymatrix{B'\ar[d]^2&C'\ar[d]^1&D'\ar[d]^3&E'\ar[d]^{1\mbox{ or }3}&F'\ar[d]^2&G'\ar[d]^4&H'\ar[d]^{2\mbox{ or }4}&I'\ar[d]^{1\mbox{ or }4}\\0&0&0&0&0&0&0&0}
$$

\item[(D1)] (descent, degree $1$)
$$
h^1(Z_{\el'},e^L)\leq h^1(Z_{\el'[1]},e^L)+h^1(Z_{\el'[1]},e^M)\quad\mbox{denoted by}\quad e^L\stackrel{j}{\lra}e^M$$
in the following cases:
$$
\xymatrix{A\ar[r]^3&B\ar[r]^4\ar[d]^1&D\ar[d]^1&&\\&C\ar[r]^4&E\ar[r]^2&F\ar[r]^3&G}
$$

\item[(D0)] (descent, degree $0$)
$$
h^0(Z_{\el'},e^{L'})\leq h^0(Z_{\el'[1]},e^{L'})+h^0(Z_{\el'[1]},e^{M'})\quad\mbox{denoted by}\quad e^{L'}\stackrel{j}{\lra}e^{M'}$$
in the following cases:
$$
\xymatrix{B'\ar[r]^4\ar[d]^1&H'\ar[r]^3\ar[d]^1&D'\ar[d]^1&&\\C'\ar[r]^4&I'\ar[r]^3&E'\ar[r]^2&F'\ar[r]^3&G'}
$$
\end{itemize}

\subsection{An appropriate word}\label{ssec:theword}

Let us consider the word $$\el_0:=(2,1,2,4,3,4,2,3,4,2,3,1,2,3,2,1,4,3,2,1,3,4,2,3),$$ and its subword
$$
\el:=\el_0[15]=(2,1,2,4,3,4,2,3,4).
$$  
Let us start by describing the result of the descent rules for the cohomology, for all the steps of the construction of the corresponding Bott--Samelson varieties in which the last letter added is equal to $2$. We do not consider the first appearance of the letter $2$, for which the construction of the Bott--Samelson variety is unique. The precise descent procedures for the rest of the appearances of the letter  $2$ are described in Appendix \ref{app:descent}. 

The first two tables there tell us that the dimension of the groups $H^1(Z_{\el_0[22]},e^{K_2})$ and $H^1(Z_{\el_0[18]},e^{K_2})$ is at most $1$ (coming from the global sections of the trivial line bundle $A'$) plus the sum of the $h^1$'s of the divisors $A,B,C,D,E,F,G$ on a point. We conclude then that $Z_{\el_0[21]}$ and $Z_{\el_0[17]}$ are flag--compatible with $\DF_4(D)$.

The third table, however, tells us only that $h^1(Z_{\el},e^{K_2})\leq 4$. This is in fact the only step in which we do not have unicity, as the last three tables (containing the descent for $h^1(Z_{\el_0[k]},e^{K_2})$, for $k=12,10,6,2$) show.

We then conclude the following 

\begin{proposition}\label{prop:main}
Let $\el_0$ be as above, and assume that $\el_0[14]$ is flag--compatible. Then ${\el_0}$ is flag--compatible.
\end{proposition}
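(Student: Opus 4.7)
The plan is to verify flag--compatibility one step at a time along the Bott--Samelson tower from $\{x\}$ up to $Z_{\el_0}$, using the assumption on $\el_0[14]$ to dispose of the bottom ten steps and the descent computations of Appendix~\ref{app:descent} to handle the remaining fourteen.

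By definition, $\el_0$ is flag--compatible when every $Z_{\el_0[s]}$ is rigid under deformations given by cocycles non-trivial on repeated curves, for $s = 0, 1, \dots, 24$. The hypothesis that $\el_0[14]$ is flag--compatible already secures this for every $s \geq 14$, so I would reduce the problem to verifying rigidity at the fourteen additional steps $s = 13, 12, \dots, 0$. At each such step, $Z_{\el_0[s]}$ is built from $Z_{\el_0[s+1]}$ by adding the letter $l_{24-s}$. Reading off the tail $(l_{11},\ldots,l_{24}) = (3,1,2,3,2,1,4,3,2,1,3,4,2,3)$ of $\el_0$, the letter $2$ appears exactly at positions $13, 15, 19, 23$; the four critical steps are therefore $s = 11, 9, 5, 1$, and at every other step the added letter lies in $\{1,3,4\}$, so Lemma~\ref{lem:only2} propagates flag--compatibility forward from $\el_0[s+1]$ to $\el_0[s]$ at no extra cost.

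For the four critical steps I would invoke the descent rule tables in Appendix~\ref{app:descent} corresponding to $k = 12, 10, 6, 2$, which together yield the bound $h^1(Z_{\el_0[k]}, e^{K_2}) \leq 1$. Since the cocycle defining $Z_{\el_0[s]}$ is required to be non-trivial on the repeated curves of $Z_{\el_0[s+1]}$ and is only considered modulo homotheties, a one-dimensional ambient $H^1$ forces the extension, and hence the $\P^1$-bundle $Z_{\el_0[s]} \to Z_{\el_0[s+1]}$, to be uniquely determined; rigidity at that step is then automatic. A downward induction on $s$, alternating between Lemma~\ref{lem:only2} for the non-$2$ letters and the uniqueness argument above for the critical values of $s$, delivers flag--compatibility of $\el_0[t]$ for every $t \leq 14$, and in particular of $\el_0 = \el_0[0]$.

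No serious obstacle arises inside this proposition itself: all the difficulty has been deliberately confined to the hypothesis and to the preceding descent computations. The word $\el_0$ was designed precisely so that the only step with a possible excess of extensions (the one where the bound is merely $h^1 \leq 4$) occurs going from $Z_{\el_0[15]} = Z_\el$ up to $Z_{\el_0[14]}$; that single obstacle is what the later geometric sections address, via the analysis of the isotropy structures on the $\P^5$-bundle of Section~\ref{sec:PandB}.
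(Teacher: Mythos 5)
Your proposal is correct and follows essentially the same route as the paper: the hypothesis disposes of all steps $s\geq 14$, Lemma \ref{lem:only2} disposes of every remaining step whose added letter is $1$, $3$ or $4$, and the Appendix tables for $k=12,10,6,2$ give $h^1(Z_{\el_0[k]},e^{K_2})\leq 1$ (the single surviving class of the trivial bundle $A'$), which pins down the extension modulo homotheties at the four critical steps $s=11,9,5,1$. This is exactly the argument the paper gives in Section \ref{ssec:theword}.
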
 

\begin{remark}\label{rem:admisext}
A priori, the only restriction we have on $H^1(Z_{\el},e^{K_2})$ is the following. The variety $Z_{\el}$ contains the fibers $\gamma_2$ of the projections $Z_{\el_0[i-1]}\to Z_{\el_0[i]}$ ($i=24,22,18$), which are mapped to curves on $\DF_4(D)$ in the class $\Gamma_2$, and the cocycle $\theta\in H^1(Z_{\el},e^{K_2})$ defining $Z_{\el_0[14]}\to Z_{\el}$ satisfies that its restriction to any of these curves is different from zero. 
In other words:
\begin{equation}
\theta\in H^+:=\bigcap_{\gamma_2}H^+(\gamma_2), \quad H^+(\gamma_2):=\left\{\theta'\in H^1(Z_{\el},e^{K_2})\,\,|\,\,\, \theta'_{|\gamma_2}\neq 0\right\}.
\end{equation}
Later on, we will denote by 
$$E^+\subset E^+(\gamma_2)\subset \P((H^1(Z_{\el},e^{K_2}))^\vee), $$ 
the sets of classes of elements in $H^+$ and in $H^+(\gamma_2)$, modulo homotheties. A priori we cannot tell that the conditions imposed by all the curves $\gamma_2$ are the same, but we note that,  for every $\gamma_2$, $E^+(\gamma_2)$ is the complement of a hyperplane in  $\P((H^1(Z_{\el},e^{K_2}))^\vee)$, so that it is isomorphic to $\C^k$, $k\leq 3$.
\end{remark}


\section{The Schubert variety ${\Sc_{\el}}$ and its contractions}\label{sec:PandB}

 Along this section, in order to compactify notation, whenever we use explicit expressions of words as subindices (on Bott--Samelson or Schubert varieties), we will avoid the use of commas. We will make use of the following auxiliary statement, for which we refer to \cite[Propositions 5.5, 5.6]{OSWW}:

\begin{lemma}\label{lem:forfiber}
 Let $I\subsetneq D$ be a subset, and consider the contraction $\pi_I:\DF_4(D)\to \DF_4(D\setminus I)$  associated with the face generated by the classes $\Gamma_i$, $i\in I$. Let $\el'$ be a reduced word of maximal length for the subdiagram $\cD_I$ of $\cD$ supported on $I$, and let $Z_{\el'}$ be the corresponding Bott--Samelson variety. Then the evaluation map $f:Z_{\el'}\to \Sc_{\el'}\subset \DF_4(D)$ is birational onto its image, which is a complete flag manifold of type $\cD_I$.
\end{lemma}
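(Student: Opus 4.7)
The plan is to reduce the statement to the classical Demazure--Bott--Samelson theorem --- that for a reduced word of maximal length the Bott--Samelson variety resolves the corresponding complete flag manifold birationally --- applied to the homogeneous structure of the fiber of $\pi_I$ containing the starting point $x$.

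First, I will show that $f(Z_{\el'})$ is contained in a single fiber of $\pi_I$. For every $j\in I$, the inclusion $D\setminus\{j\}\supseteq D\setminus I$ yields a factorization of $\pi_I$ through $\pi_j$, so $\pi_I$ is constant on every $\pi_j$-fiber. Since all letters of $\el'$ lie in $I$, each $\P^1$-fiber appearing in the recursive construction of $Z_{\el'}$ is a fiber of some $\pi_{l_i}$ with $l_i\in I$, and hence is collapsed by $\pi_I$. A straightforward induction starting from $Z_{\el'[r]}=\{x\}$ then gives $\pi_I\circ f_{\el'[s]}\equiv \pi_I(x)$ for every $s$, so that $f(Z_{\el'})\subseteq F:=\pi_I^{-1}(\pi_I(x))$.

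Second, I will identify $F$ with the complete flag manifold $\cD_I(I)$, using the general fact recalled in Section \ref{sec:notn} that the fibers of $\pi_I$ are rational homogeneous of type $\cD_I$, and that the elementary $\P^1$-bundle projections of $F$ coincide with the restrictions $\pi_j|_F$ for $j\in I$. Under this identification, the fiber-product construction defining $Z_{\el'}$ can be carried out inside $F$ using only the contractions of $F$; in other words, $Z_{\el'}$ is canonically isomorphic to the Bott--Samelson variety of $\cD_I(I)$ associated with $\el'$ and the starting point $x$.

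Finally, I invoke the classical result: since $\el'$ is reduced of maximal length for $W(\cD_I)$, the product $s_{l_1}\dots s_{l_r}$ is the longest element of that Weyl group, whose Schubert variety in $\cD_I(I)$ is all of $\cD_I(I)$, and the Bott--Samelson evaluation map associated with a reduced word is birational onto its image. Putting these together yields $\Sc_{\el'}=F\cong\cD_I(I)$ and the birationality of $f:Z_{\el'}\to\Sc_{\el'}$.

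The step requiring the most care is the compatibility used in the second paragraph: that the restriction of each $\pi_j$ ($j\in I$) to the fiber $F$ really is the elementary contraction coming from the homogeneous structure of $F\cong\cD_I(I)$. This ensures that the fiber products defining $Z_{\el'}$ can be computed either inside $\DF_4(D)$ or inside $F$ with the same outcome. This compatibility is a general feature of parabolic quotients of $G$ implicit in Section \ref{sec:notn}; once accepted, the rest of the argument is essentially formal.
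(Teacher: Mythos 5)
Your argument is correct, but it is worth noting that the paper does not actually prove this lemma: it simply refers to \cite[Propositions 5.5, 5.6]{OSWW}. Those propositions are established in the more general setting of that paper, where the ambient variety is only assumed to be a smooth projective variety carrying $\P^1$-bundle structures with Cartan intersection matrix (so homogeneity is not available and the argument must be run through the cohomological machinery of Bott--Samelson cells). In the present context $X=\DF_4(D)$ is an honest flag manifold $G/B$, and your route --- localize the whole Bott--Samelson tower into a single fiber $F=\pi_I^{-1}(\pi_I(x))$, identify $F$ with the complete flag $\cD_I(I)$ of the Levi, and invoke the classical Demazure theorem that the Bott--Samelson resolution of the longest reduced word is birational onto the full flag manifold --- is a legitimate and more elementary shortcut. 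You correctly isolate the one point needing care: that the fiber products defining $Z_{\el'}$ give the same answer whether computed in $\DF_4(D)$ or in $F$. This holds because for $j\in I$ the contraction $\pi_I$ factors through $\pi_j$, so every $\pi_j$-fiber meeting $F$ is entirely contained in $F$, and hence $\pi_j|_F$ is exactly the corresponding elementary contraction of $F\cong\cD_I(I)$; together with your induction showing $\pi_I\circ f_{\el'[s]}\equiv\pi_I(x)$, this makes the two constructions literally coincide. So the proof is sound; it just replaces the paper's black-box citation with a direct appeal to the classical theory, which is available here precisely because $X$ is already known to be homogeneous.
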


We will consider the words $\el_0$ and $\el=\el_0[15]$ introduced in Section \ref{ssec:theword}, and the following auxiliary objects:

\begin{notation}
Set  $$P:=\pi_{23}(\Sc_{\el})
\subset \DF_4(1,4), \quad B:=\pi_{234}(\Sc_{\el})
\subset \DF_4(1) 
$$
\end{notation}

The following lemma describes geometrically the varieties $P$ and $B$.

\begin{lemma}\label{lem:BP}
The subvariety $B$ is isomorphic to $\P^2$, and the restriction of the ample generator of $\DF_4(1)$ to $B$ has degree one. Moreover, the fibers of the natural map $Z_{\el}\to B$ are Bott--Samelson varieties of the form $Z_{(2434234)}$ and  
the natural map $\pi:P\to B$ is a $\P^5$-bundle.\end{lemma}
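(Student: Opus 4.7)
The plan is to exploit the recursive structure of the Bott--Samelson construction together with the basic identity $\pi_I(\Sc_{\el[s]})=\pi_I(\Sc_{\el[s+1]})$ whenever $l_{r-s}\in I$, which holds because $\pi_I$ factors through $\pi_{l_{r-s}}$. Applying this iteratively with $I=\{2,3,4\}$ to $\el=(2,1,2,4,3,4,2,3,4)$, whose last seven letters all lie in $\{2,3,4\}$, gives $B=\pi_{234}(\Sc_{(2,1)})$. Using the geometric description recalled in Section \ref{ssec:flagsF41} --- a point of $\DF_4(D)$ is a flag $(p,\ell,\pi,Q)$ with $p\in\DF_4(1)$, $\ell$ a line through $p$, $\pi\in\DF_4(3)$ a plane containing $\ell$, and $Q\in\DF_4(4)$ a $5$-dimensional quadric containing $\pi$ --- an unwinding of the recursive definition starting from $x=(p,\ell_0,\pi_0,Q_0)$ shows that $\Sc_{(2,1)}$ consists of the flags $(p',\ell,\pi_0,Q_0)$ with $\ell\subset\pi_0$ a line through $p$ and $p'\in\ell$. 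Projecting by $\pi_{234}$ (keeping only $p'$) sweeps out all of $\pi_0$, so $B=\pi_0$ is a plane from the family $\DF_4(3)$, isomorphic to $\P^2$; since the lines of $B$ in its $\P^2$-structure are precisely the lines of $\DF_4(1)$ contained in $\pi_0$, Remark \ref{rem:Picgens} yields $H_1|_B\cong\cO_{\P^2}(1)$.

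For the fibers of $Z_\el\to B$, I factor the map as $Z_\el\to Z_{(2,1)}\to\Sc_{(2,1)}\to B$, where the second arrow is $f_{(2,1)}$ and the third is $\pi_{234}$. Over a point $p'\in B$ with $p'\neq p$ the line $\ell$ through $p$ and $p'$ in $\pi_0$ is uniquely determined, so over a general $b\in B$ the fibers of both $\Sc_{(2,1)}\to B$ and $Z_{(2,1)}\to B$ consist of a single point $z'\in Z_{(2,1)}$. By the recursive definition of the Bott--Samelson tower applied to the remaining letters of $\el$, the fiber of $Z_\el\to Z_{(2,1)}$ over $z'$ is then the Bott--Samelson variety $Z_{(2,4,3,4,2,3,4)}$ with starting point $f_{(2,1)}(z')\in X$, as claimed.

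For the $\P^5$-bundle statement, I propose a dimension comparison inside the ambient $\P^5$-bundle $\pi_{14,4}\colon\DF_4(1,4)\to\DF_4(1)$ provided by Lemma \ref{lem:rank6bdl}. Since $\pi_{234}=\pi_{14,4}\circ\pi_{23}$, we have $P\subset\pi_{14,4}^{-1}(B)$; both sides are irreducible ($P$ as the image of the irreducible $Z_\el$, and $\pi_{14,4}^{-1}(B)$ as a $\P^5$-bundle over the irreducible $B\cong\P^2$), and the right hand side has dimension $2+5=7$, so the claim reduces to $\dim P=7$. Now $P$ is the Schubert variety in $\DF_4(1,4)=G/P_{14}$ associated with the minimal length representative of $w\langle s_2,s_3\rangle$, where $w=s_2s_1s_2s_4s_3s_4s_2s_3s_4$; repeated use of the $\DF_4$-braid relations ($s_3s_4s_3=s_4s_3s_4$, $s_2s_3s_2s_3=s_3s_2s_3s_2$, and the commutators $s_1s_3=s_3s_1$, $s_1s_4=s_4s_1$, $s_2s_4=s_4s_2$) rewrites $ws_3s_2$ as $s_2s_1s_4s_3s_2s_3s_4$, and verifying that $(ws_3s_2)(\alpha_2)$ and $(ws_3s_2)(\alpha_3)$ are both positive roots confirms that this expression has length $7$ and indeed represents the minimal coset. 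Therefore $\dim P=7=\dim\pi_{14,4}^{-1}(B)$, the two irreducible varieties coincide, and $\pi\colon P\to B$ is the $\P^5$-bundle inherited from $\pi_{14,4}$.

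The main obstacle is the last step: while the geometric picture for $B$ is transparent, the dimension count for $P$ reduces to a somewhat delicate manipulation in the $\DF_4$-Weyl group. A more intrinsic alternative would be to show directly that $\pi_{23}(\Sc_\el\cap\pi_{234}^{-1}(b))$ fills out the fiber $\pi_{14,4}^{-1}(b)\cong\P^5$ over a general $b\in B$, which would avoid the explicit Weyl group calculation but still require a careful analysis of the Schubert structure on the $\DC_3$-fibers of $\pi_{234}$.
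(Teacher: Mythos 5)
Your proof is correct, but for the key third claim it takes a genuinely different route from the paper. For $B\cong\P^2$ the paper invokes Lemma \ref{lem:forfiber} to identify $\Sc_{(212)}$ with a complete flag of type $\DA_2$, whereas you unwind the flags $(p',\ell,\pi_0,Q_0)$ directly; both reductions to $\pi_{234}(\Sc_{(21)})$ (resp. $\Sc_{(212)}$) rest on the same ``strip trailing letters contracted by $\pi_{234}$'' identity, and your identification of the fibers of $Z_\el\to B$ with $Z_{(2434234)}$ matches the paper's (you are in fact slightly more careful in restricting to a general $b\in B$ --- over $b=\pi_{234}(x)$ the fiber of $Z_{(21)}\to B$ is a whole $\P^1$, so the literal statement only holds generically, an imprecision the paper's proof shares). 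The real divergence is the $\P^5$-bundle claim: the paper argues fiberwise, using that $\el'=(2,4,3,4,2,3,4,2,3)$ is a reduced word of maximal length for the $\DC_3$-fibers of $\pi_{234}$ and that $Z_{\el'}$ and $Z_{\el'[2]}=Z_{(2434234)}$ have the same image in $\DF_4(1,4)$, so each fiber of $P\to B$ is the full $\P^5$; you instead make a global dimension count, showing $P=\pi_{14,4}^{-1}(B)$ by computing that the minimal length representative of $w\langle s_2,s_3\rangle$, $w=s_2s_1s_2s_4s_3s_4s_2s_3s_4$, has length $7$. I checked your braid-relation reduction $ws_3s_2=s_2s_1s_4s_3s_2s_3s_4$ and the positivity of $(ws_3s_2)(\alpha_2)=\alpha_1$ and $(ws_3s_2)(\alpha_3)=\alpha_2+\alpha_3$, so the computation you left as ``verifiable'' does go through. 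What each approach buys: yours is self-contained and reduces everything to a finite Weyl-group calculation, at the cost of opacity; the paper's fiberwise argument reuses the $\DC_3$-isotropic-flag description of the fibers, which is the picture the subsequent sections (Lemmas \ref{lem:M1}, \ref{lem:famM2}) actually build on, so it integrates better with the rest of the paper --- indeed the ``more intrinsic alternative'' you sketch at the end is essentially the paper's proof.
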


\begin{proof} 
By Lemma \ref{lem:forfiber} $\Sc_{(212)}\subset \DF_4(D)$ is a complete flag of type $\DA_2$; in particular its image $B$ in $\DF_4(1)$ is isomophic to $\P^2$ and the fibers of  its projection onto $B\subset\DF_4(1)$ are $\P^1$'s in the class $\Gamma_2$.

Now the first assertion follows from the equality:
$$
\pi_{234}(\Sc_{\el})
=\pi_{234}(\Sc_{\el[6]})
=\pi_{234}(\Sc_{(212)})
$$
and the fact that the images of the curves of type $\Gamma_1$ in $\DF_4(1)$ have degree one with respect to its ample generator $H_1$. 
To check the second part we add the subword $(4,3,4,2,3,4)$, to get that the fibers of $Z_{\el}\to B$ are of the required form. 

Finally we need to check that the image of every fiber of $Z_{\el}\to B$ into $B$ is a $\P^5$ (fiber of $\DF_4(1,4)\to \DF_4(1)$). For this, it is enough to note that the word $\el'=(2,4,3,4,2,3,4,2,3)$ is a reduced word of maximal length for the fibers of $\DF_4(D)\to \DF_4(1)$, and that  $Z_{\el'}$ and $Z_{\el'[2]}=Z_{(2434234)}$ have the same image into $\DF_4(1,4)$. 
\end{proof}

\subsection{Identifying the bundle $\bm{\cE}$}\label{ssec:step1}

The variety $P$ is the projectivization of the restriction to the projective plane $B$ of the bundle $\cE$ on $\DF_4(1)$, provided by Lemma \ref{lem:rank6bdl}. Since we are not going to use other restrictions of $\cE$, we will abuse notation and write $\cE$ instead  of $\cE_{|B}$. The following Lemma (in which the restriction of $H_1$ to $B$ is denoted by $\cO_B(1)$) describes the bundle $\cE$ completely.

\begin{lemma}\label{lem:decomp}
The subvariety $P$ is isomorphic to $\P(\cE)$, with:
$$
\cE\cong\big(\cO_{B}\otimes W\big)\oplus T_{B}(-1)\oplus\big(\cO_{B}(1)\otimes W^\vee\big),
$$
for a certain vector space $W$ of dimension two.
\end{lemma}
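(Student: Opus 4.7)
The plan is to realise $\cE_{|B}$ as the direct sum of three rank-$2$ pieces arising from the geometry of $B\subset\DF_4(1)$, and then to check that the resulting filtration splits via cohomological vanishings on $\P^2$.

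For the outer two summands I use Section~\ref{ssec:flagsF41}: since $B$ is a plane of the family $\DF_4(3)$, the set of smooth $5$-dimensional quadrics $Q\in\DF_4(4)$ containing $B$ is a line $\P^1\subset\DF_4(4)$. Being constant over $B$, the incidence locus $\{(x,Q)\in P:Q\supset B\}\cong B\times\P^1$ is a trivial sub-$\P^1$-bundle of $P=\P(\cE_{|B})$, which by the convention of Definition~\ref{def:isotr} corresponds to a trivial rank-$2$ quotient
$$\cE_{|B}\twoheadrightarrow \cO_B\otimes W,\qquad \dim W=2.$$
Dualising via the skew-symmetric isomorphism $\cE_{|B}\cong\cE_{|B}^\vee\otimes\cO_B(1)$ of Lemma~\ref{lem:rank6bdl} yields a rank-$2$ sub-bundle $\cO_B(1)\otimes W^\vee\hookrightarrow\cE_{|B}$; since $\Hom_{\P^2}(\cO(1),\cO)=0$, this sub-bundle lies inside the rank-$4$ kernel $\cF:=\ker(\cE_{|B}\twoheadrightarrow\cO_B\otimes W)$. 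The sub-bundle $\cO_B(1)\otimes W^\vee$ is isotropic (since $\Hom_{\P^2}(\cO(2),\cO(1))=0$) and by construction equals $(\cO_B\otimes W)^\perp$; hence the form descends to a non-degenerate skew form on $\cG:=\cF/(\cO_B(1)\otimes W^\vee)$, giving $\cG\cong\cG^\vee\otimes\cO_B(1)$ with $\rk\cG=2$ and $c_1(\cG)=1$.

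The key and most delicate step is to identify $\cG\cong T_B(-1)$. Combining Lemma~\ref{lem:rank6bdl} (splitting type $(0^3,1^3)$ of $\cE$ on every line of $\DF_4(1)$) with Lemma~\ref{lem:BP} (lines of $B$ are lines of $\DF_4(1)$), $\cE_{|B}$ is uniform on $B$ of splitting type $(0^3,1^3)$; since the already-identified summands account for $(0^2)$ and $(1^2)$ on each line, $\cG$ is uniform of splitting type $(0,1)$. On $\P^2$ there are exactly two rank-$2$ uniform bundles of this type --- $\cO\oplus\cO(1)$ and $T_B(-1)$ --- and ruling out the former is the main obstacle. I would settle this by a homogeneity argument: $\cE$ is $\DF_4$-equivariant on $\DF_4(1)$ and $B$ is the closed orbit of the stabiliser of $[B]\in\DF_4(3)$, so $\cE_{|B}$ decomposes equivariantly, and the middle summand appears as the unique irreducible homogeneous rank-$2$ bundle on $\P^2$ with the required weight and Chern classes, namely $T_B(-1)$. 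Alternatively, a direct computation of $h^0(\cG)=3$ (via $T_B(-1)\cong\Omega^1_B(2)$ and the Euler sequence) distinguishes it from $\cO\oplus\cO(1)$, which has $h^0=4$.

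Once $\cG\cong T_B(-1)$ is established, the two short exact sequences
$$0\to\cO_B(1)\otimes W^\vee\to\cF\to T_B(-1)\to 0,\qquad 0\to\cF\to\cE_{|B}\to\cO_B\otimes W\to 0$$
both split by the vanishings $\Ext^1_{\P^2}(T_B(-1),\cO_B(1))=H^1(\P^2,\Omega^1(2))=0$ and $H^1(\P^2,\cF)=H^1(T_B(-1))\oplus H^1(\cO_B(1))^{\oplus 2}=0$, yielding the required decomposition $\cE_{|B}\cong (\cO_B\otimes W)\oplus T_B(-1)\oplus(\cO_B(1)\otimes W^\vee)$.
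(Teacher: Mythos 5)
Your construction of the filtration (trivial quotient $\cE\twoheadrightarrow\cO_B\otimes W$ from the line of quadrics containing $B$, its orthogonal sub-bundle $\cO_B(1)\otimes W^\vee$, the self-dual rank-two subquotient $\cG\cong\cG^\vee(1)$, the reduction to the two uniform bundles of splitting type $(0,1)$ on $\P^2$, and the final splitting via $H^1$-vanishings) matches the paper's proof step for step. But the step you yourself flag as ``the main obstacle'' --- ruling out $\cG\cong\cO_{\P^2}\oplus\cO_{\P^2}(1)$ --- is not actually carried out, and neither of your two suggestions closes it. The homogeneity argument begs the question: $\cO_{\P^2}\oplus\cO_{\P^2}(1)$ is itself a homogeneous bundle on $\P^2$, so invoking ``the unique \emph{irreducible} homogeneous rank-2 bundle with the required weight'' presupposes that the isotropy representation on the middle graded piece is irreducible, which is exactly the dichotomy to be decided; you would have to compute that representation, and you don't. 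The alternative via $h^0(\cG)=3$ is circular as written: you compute $h^0$ of the \emph{candidate} $T_B(-1)$ from the Euler sequence, but you give no independent, geometric computation of $h^0$ of the actual bundle $\cG$, so nothing is distinguished.

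The paper closes this gap with an argument whose ingredients you already have on the table. If $\cG$ were split, then (by the same $H^1$-vanishings you use at the end) $\cF$ and $\cE$ would split as well, forcing $\cE\cong\cO_{\P^2}^{\oplus 3}\oplus\cO_{\P^2}(1)^{\oplus 3}$. That bundle has a $2$-dimensional projective space of trivial rank-one quotients, and each trivial quotient of $\cE$ is a section of $P\to B$ contracted by $H_4$, i.e.\ a quadric of the family $\DF_4(4)$ containing $B$; hence $\dim\{\Q^5\in\DF_4(4)\mid\Q^5\supset B\}\geq 2$. This contradicts the fact --- which you invoked at the very start to produce $W$ --- that this family is the line $r_1$, the image of a fiber of $\DF_4(3,4)\to\DF_4(3)$. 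Adding this contradiction argument would complete your proof; as it stands, the identification $\cG\cong T_B(-1)$ is asserted rather than proved.
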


\begin{proof} Let us first consider the image of $Z_{(2124)}=Z_{\el[5]}\subset Z_{\el}$ into $P$. Since $(2,1,2,4)$ is a reduced word of maximal length for the subdiagram of $\DF_4$ supported at the set of nodes $\{1,2,4\}$, Lemma \ref{lem:forfiber} tells us that $Z_{(2124)}$ maps birationally onto its image into $\DF_4(D)$, which is a complete flag of type $\DA_2\times\DA_1$. From this it follows that its image into $\DF_4(1,4)$ is a projective subbundle of $P\to B$ of the form $P_1:=B\times r_1$, where $r_1$ is a projective line, image of $Z_{(2124)}$ into $\DF_4(4)$. But $\cE$ has been defined as the push forward of the divisor $H_4$, so the inclusion $P_1\subset P$ is given by a surjective morphism 
$$\cE\to\cO_B\otimes W, \mbox{ where }r_1=\P(W).$$ 

On the other hand, following Section \ref{ssec:flagsF41}, we may consider $\DF_4(4)$ and $\DF_4(3)$ as parameter spaces of families of $\Q^5$'s and $\P^2$'s in $\DF_4(1)$, respectively. The surface $B=\pi_{234}(\Sc_{(212)})\subset\DF_4(1)$ is, by construction, one of the $\P^2$'s parametrized by $\DF_4(3)$; then, as we have seen in the last item of Section \ref{ssec:flagsF41}, we may assert that the line $r_1\subset \DF_4(4)$ may be described as follows:
$$
r_1=\{\Q^5\in\DF_4(4)|\,\,\Q^5\supset B \}.
$$

Let us denote by $\cF_2^{\,\vee}(1)$ the kernel of $\cE\to\cO_B\otimes W$. 
The skew-symmetric form  $\eta$ provides a commutative diagram, with exact rows and columns:

\begin{equation}
\xymatrix@=30pt{
\cO_B(1)\otimes W^\vee\ar[d]\ar@{=}[r]&\cO_B(1)\otimes W^\vee\ar[d]&\\
\cF_2^{\,\vee}(1)\ar[r]\ar[d]&\cE\cong\cE^{\vee}(1)\ar[r]\ar[d]&\cO_B\otimes W \ar@{=}[d]\\
\cG_2\ar[r]&\cF_2\ar[r]&\cO_B\otimes W \\
}\label{eq:diagram}
\end{equation}
for a certain rank two vector bundle $\cG_2$, satisfying $\cG_2=\cG_2^\vee(1)$; this last condition easily implies that, for any line $\ell$ in $B$,  $\cG_2|_{\ell} \simeq \cO_\ell \oplus \cO_{\ell}(1)$. By the classification of uniform vector bundles on $\P^2$ it follows that either $\cG_2 \simeq T_{\P^2}(-1)$ or $\cG_2 \simeq \cO_{\P^2} \oplus \cO_{\P^2}(1)$. In the second case we would get that $\cF_2$ and $\cE$ are split too; in particular we would have $\cE \simeq \cO_{\P^2}^{\oplus 3} \oplus \cO_{\P^2}^{\oplus 3}(1)$, from which we would get the contradiction: 
$$\dim \{\Q^5\in\DF_4(4)|\,\,\Q^5\supset B \} \geq 2.$$ 
 
Finally, since $H^1(T_{\P^2}(-1))=H^1(\cO_{\P^2})=H^1(\cO_{\P^2}(1))=0$ we get first that $\cF_2 \simeq T_{\P^2}(-1) \oplus  \big(\cO_{\P^2}\otimes W\big)$, and subsequently that 
$\cE\cong\big(\cO_{\P^2}\otimes W\big)\oplus T_{\P^2}(-1)\oplus\big(\cO_{\P^2}(1)\otimes W^\vee\big)$.
\end{proof}

\begin{remark}\label{rem:isoP1P2}

Setting  $\cF_1:=\cO_B\otimes W$, the proof of the Lemma tells us that the vector bundle $\cE$ comes with a filtration
$$
\cF_1^\vee(1)\hookrightarrow \cF_2^\vee(1)\hookrightarrow\cE.
$$
Equivalently, we have surjections $\cE\lra \cF_2\lra \cF_1$,
whose projectivizations provide a flag of projective subbundles over $B$:
 $$
 P_1=B\times r_1 =\P(\cF_1)\hookrightarrow P_2:= \P(\cF_2)\hookrightarrow P.
 $$
With the notation introduced in Definition \ref{def:isotr}, we may then write 
$P_1\subset P_2=P_1^\perp$ and say that $P_1$ is isotropic.
\end{remark} 

\subsection{Isotropic lines in $\bm{P}$}\label{ssec:isotP}

In the sequel we will denote by $\cM\subset\DF_4(1,3)$ the subvariety parametrizing isotropic lines contained in $P$ (see Remark \ref{rem:isolines}). The corresponding universal family $\cU $ is the inverse image of $\cM$ in $\DF_4(1,3,4)$: 
$$
\xymatrix@C=40pt{\cU \,\,\ar@{^{(}->}[r]\ar[d]&\DF_4(1,3,4)  \ar[d]^{\pi_{134,4}} \ar[r]^{\pi_{134,3}}&\DF_4(1,4)\\
\cM  \,\,\ar@{^{(}->}[r]&\DF_4(1,3)
}
$$
Moreover, we will also consider the subfamilies parametrized by:
\begin{equation}
\begin{array}{l}\vspace{0.2cm}
\cM_1:=\{\ell \in\cM\,\,|\,\,\,\ell\cap P_1\neq\emptyset,\,\,\,\ell\subset P_2\},\\
\cM_2:=\{\ell \in\cM\,\,|\,\,\,\ell\cap P_2\neq\emptyset\}.
\end{array} 
\label{eq:M1M2}
\end{equation}
The next two lemmata allow us to describe these subfamilies in terms of Schubert varieties.

\begin{lemma}\label{lem:M1} The subvariety $\cM_1 \subset \DF_4(1,3)$ is the image of $\Sc_{\el[4]}=\Sc_{(21243)}$ via the contraction $\pi_{24}:\DF_4(D)\to\DF_4(1,3)$. The universal family $\cU_1$ over $\cM_1$ is the image of $\Sc_{\el[3]}=\Sc_{(212434)}$ via the contraction $\pi_{2}:\DF_4(D)\to\DF_4(1,3,4)$.
\end{lemma}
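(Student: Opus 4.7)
The plan is to give explicit geometric descriptions of both sides as loci of incidence conditions in $\DF_4(1)$. First I would unpack $\Sc_{(21243)}$ recursively, starting from the base flag $(x_0,\ell_0,\pi_0,Q_0)$ with $\pi_0 = B$. By Lemma~\ref{lem:forfiber}, $\Sc_{(2124)}$ is a complete flag of type $\DA_2\times\DA_1$, namely
\[
\Sc_{(2124)}=\{(x,\ell,\pi_0,Q): x\in\ell\subset B,\ Q\supset B\}
\]
(the complete flag of $B\cong\P^2$ times the $\P^1$ of $\Q^5$'s through $B$). Taking the $\pi_3$-orbit, which varies $\pi$ over the planes containing $\ell$ and contained in $Q$, gives
\[
\Sc_{(21243)}=\{(x,\ell,\pi,Q):x\in\ell\subset B,\ B\subset Q,\ \ell\subset\pi\subset Q\},
\]
whose projection via $\pi_{24}$ is the set of $(x,\pi)\in\DF_4(1,3)$ with $x\in B$ such that $\pi\cap B$ contains a line through $x$ and $\pi,B$ lie in a common $\Q^5$.

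Next I would translate the defining conditions of $\cM_1$ into the same language, using the correspondence of Section~\ref{ssec:vertiso} between planes through $x$ in $\DF_4(1)$ and isotropic lines in $\P(\cE_x)$. The key observation --- already visible in the proof of Lemma~\ref{lem:decomp}, where $P_1\cong B\times r_1$ --- is that the isotropic line $P_{1,x}$ corresponds precisely to the plane $B$ itself: a $\Q^5$ through $x$ and $B$ corresponds to a $1$-dimensional subspace of the isotropic $2$-dimensional subspace $B_x\subset\cE_x$ associated to $B$, so $P_{1,x}$ lifts to $B_x$, and hence $P_{2,x}=P_{1,x}^\perp$ lifts to $B_x^\perp$. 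For a plane $\pi\ni x$ with associated isotropic $2$-dimensional subspace $\pi_x\subset\cE_x$, the conditions $\ell_\pi\cap P_{1,x}\ne\emptyset$ and $\ell_\pi\subset P_{2,x}$ therefore translate to $\pi_x\cap B_x\ne 0$ and $\pi_x\subset B_x^\perp$, respectively. These jointly force $\pi_x+B_x$ to be a 3-dimensional isotropic (hence Lagrangian) subspace of $\cE_x$, which corresponds to a line $\ell\subset\DF_4(1)$ through $x$ contained in both $\pi$ and $B$; and a generator of the $1$-dimensional subspace $\pi_x\cap B_x\subset B_x$ is exactly the $\Q^5\in P_{1,x}$ sitting on the isotropic line $\ell_\pi$, i.e., a $\Q^5$ containing both $\pi$ and $B$. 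The converses are immediate, so the two descriptions coincide and $\cM_1=\pi_{24}(\Sc_{(21243)})$.

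For the universal family, $\Sc_{(212434)}=\pi_4^{-1}(\pi_4(\Sc_{(21243)}))$ is obtained by varying $Q$ freely over all $\Q^5$'s containing $\pi$ in the flags defining $\Sc_{(21243)}$, giving
\[
\Sc_{(212434)}=\{(x,\ell,\pi,Q'):x\in\ell\subset B,\ \ell\subset\pi,\ \pi\subset Q'\}.
\]
Its projection via $\pi_2$ to $\DF_4(1,3,4)$ forgets $\ell$, yielding the triples $(x,\pi,Q')$ with $(x,\pi)\in\cM_1$ and $\pi\subset Q'$, which is exactly $\cU_1=\pi_{134,4}^{-1}(\cM_1)$. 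The hardest step is the symplectic-to-incidence translation in the middle paragraph: carefully identifying the isotropic $2$-dimensional subspace of $\cE_x$ attached to each plane of $\DF_4(1)$ through $x$ via the $\DC_3$-flag correspondence, and confirming that two planes sharing a line through $x$ corresponds precisely to their associated $2$-d subspaces being symplectically orthogonal and sharing a $1$-d subspace.
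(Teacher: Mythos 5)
Your argument is correct in substance, but it runs through a genuinely different picture than the paper's proof. You describe everything in the incidence geometry of $\DF_4(1)$ (flags $x\in\ell\subset\pi\subset Q$ of points, lines, planes and $\Q^5$'s, as in Section \ref{ssec:flagsF41}) and then translate the defining conditions of $\cM_1$ into that language via the symplectic dictionary of the fiber $P_x\cong\P^5$. The paper instead never leaves the fiber picture of Section \ref{ssec:vertiso}: it observes (as in Lemma \ref{lem:BP}) that the fibers of $Z_{\el[4]}\to B$ are of type $Z_{(243)}$, retraces the construction of $\Sc_{(243)}$ inside a single fiber $P_b$ as $\{p\subset\ell\subset\pi \mid p\in\ell_0,\ \pi\subset\ell_0^\perp\}$, and then identifies $\ell_0=\pi_{23}(\Sc_{(24)})$ with $P_{1,b}$ using $P_1=\pi_{23}(\Sc_{(2124)})$ from the proof of Lemma \ref{lem:decomp} --- at which point the conditions ``meets $P_{1,b}$, contained in $P_{2,b}$'' are literally the defining conditions of $\cM_1$ and nothing remains to be translated. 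What the paper's route buys is precisely the avoidance of your ``hardest step'': the incidence-reversing correspondence between flags through $x$ in $\DF_4(1)$ and isotropic flags in $P_x$ (plane through $x$ $\leftrightarrow$ isotropic line $\{Q:Q\supset\pi\}$, line through $x$ $\leftrightarrow$ Lagrangian plane, with $\ell\subset\pi$ corresponding to $\pi_x\subset L_\ell$) is only implicit in Remark \ref{rem:isolines} and Section \ref{ssec:flagsF41}, so in your version it would have to be set up carefully before the middle paragraph is rigorous. What your route buys is a slightly more self-contained description of the Schubert cells as explicit incidence loci, and a clean linear-algebra reason for the equivalence: since $\pi_x$ and $B_x$ are isotropic, $\pi_x\subset B_x^\perp$ already forces $\pi_x\cap B_x\neq 0$ (an isotropic sum cannot have dimension $4$), which also justifies your silent dropping of the condition ``$\exists\,Q\supset\pi\cup B$'' in the displayed description of $\Sc_{(212434)}$ --- a small point worth making explicit, as is the degenerate case $\pi=B$ where $\pi_x+B_x$ is only $2$-dimensional (harmless for the set-theoretic equality, but your phrase ``force $\pi_x+B_x$ to be $3$-dimensional'' is not literally true there).
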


\begin{proof}
As in Lemma \ref{lem:BP} we can show that $\pi_{234}(\Sc_{\el[4]}) = B$ and that the fibers of the natural map $Z_{\el[4]} \to B$ are Bott--Samelson varieties of the form  $Z_{(243)}$.
Let us describe the image $\Sc_{(243)}$ of a variety of the form $Z_{(243)}$ in $\DF_4(D)$, by retracing the way in which it is constructed. We will use the description of vertical isotropic flags, and of the fibers of the maps $\pi_2,\pi_3,\pi_4$, that we have presented in Section \ref{ssec:vertiso}. 

Let us start with an element $b \in B$ and a point in $\pi_{234}^{-1}(b)$, which may be described as a flag of isotropic subspaces in the fiber $P_b$ of $P$ over $B$:
$$\Sc_{\emptyset}=\{p_0 \subset \ell_0 \subset \pi_0 \subset P_b\}.$$
The variety $\Sc_{(2)}$ can then be constructed upon the pencil of isotropic planes containing $\ell_0$ and contained in $\ell_0^\perp$,
$$\Sc_{(2)}=\{p_0 \subset \ell_0 \subset \pi \,\,|\,\,   \pi \subset  \ell_0^\perp\},$$
and $\Sc_{(24)}$ is constructed by moving the point on the line $\ell_0$,
$$\Sc_{(24)}=\{p \subset \ell_0 \subset \pi \,\,|\,\,    \pi\subset  \ell_0^\perp\}.$$
Finally,  we have the following description of $\Sc_{(243)}$: 
$$\Sc_{(243)}=\{p \subset \ell \subset \pi \,\,|\,\, p\in\ell_0, \,\,\pi\subset \ell_0^\perp \}.$$
In particular, $\pi_{24}(\Sc_{(243)})$ parametrizes the isotropic lines meeting a given line $\ell_0$ and contained in the orthogonal three-dimensional linear space $\ell_0^\perp$; moreover, the line $\ell_0$ can be described as $\pi_{23}(\Sc_{(24)})$.

Going back to our statement, 
the above argument tells us that in order to conclude, we just need to identify the fiber of $\pi_{23}(\Sc_{(2124)})\to B$, and its orthogonal subspace, at every point $b\in B$. But now, 
from the proof of Lemma \ref{lem:decomp} we know that $B\times r_1=P_1= \pi_{23}(\Sc_{(2124)})$. This tells us that over every point $b\in B$, the fiber of $\pi_{24}(\Sc_{(21243)})\to B$  consists of the isotropic lines meeting the fiber of $\pi_{23}(\Sc_{(2124)})\to B$, which is $P_{1,b}$, and contained in its orthogonal, which is $P_{2,b}$. This concludes the first part of the statement. For the second we first note that, arguing as above, the fiber of $\Sc_{(212434)}\to B$ can be written as:
$$\Sc_{(2434)}=\{p' \subset \ell \subset \pi \,\,|\,\, \ell\in\cM_1,\,\,P_{1,b}\subset\pi\subset P_{2,b} \},$$
and then its image via $\pi_{2}$ is precisely the universal family $\cU_1$. 
\end{proof}

\begin{lemma}\label{lem:famM2}  
The subvariety $\cM_2 \subset \DF_4(1,3)$ is the image of $Z_{\el[1]}$ via $\pi_{24}$, and its universal family $\cU_2$ over is $\pi_{2}(\Sc_{\el})\subset\DF_4(1,3,4)$. \end{lemma}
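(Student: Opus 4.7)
The plan is to follow the strategy of Lemma \ref{lem:M1}: trace the recursive construction of $\Sc_{\el[1]}\subset\DF_4(D)$ through the last letters of $\el[1]$ and then apply $\pi_{24}$. As in Lemma \ref{lem:BP}, I would first check that $\pi_{234}(\Sc_{\el[1]})=B$ and that the fibers of $Z_{\el[1]}\to B$ are Bott--Samelson varieties of the form $Z_{(243423)}$: the fiber of $Z_{(212)}\to B$ is a $\P^1$ of class $\Gamma_2$, on top of which the remaining letters $(4,3,4,2,3)$ of $\el[1]$ stack further $\P^1$-bundles. Thus it suffices to describe the image $\Sc_{(243423)}$ inside a single fiber $\pi_{234}^{-1}(b)$ for an arbitrary $b\in B$, and then let $b$ vary.

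Starting from $\Sc_{(243)}=\{(p,\ell,\pi):p\in\ell_0\cap\ell,\ \ell\subset\pi=\pi^\perp,\ \ell_0\subset\pi\}$ as already computed in the proof of Lemma \ref{lem:M1}, I would extend the Schubert calculation through the additional letters $4,2,3$ by iteratively taking unions of $\gamma_4$-, $\gamma_2$- and $\gamma_3$-curves, using the descriptions given in Section \ref{ssec:vertiso}. The identification $\ell_0=r_1=P_{1,b}$ coming from Lemma \ref{lem:M1} (and the consequent $\ell_0^\perp=P_{2,b}$) allows us to phrase the resulting incidence conditions on the flag $(p',\ell',\pi')\in\Sc_{(243423)}$ directly in terms of $P_1$ and $P_2$.

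Applying $\pi_{24}$ to $\Sc_{(243423)}$ then forgets $p'$ and $\pi'$, keeping only the isotropic line $\ell'$. A symplectic linear-algebra case analysis in the $6$-dimensional fiber --according to whether $\ell'$ meets $\ell_0$, is contained in $\ell_0^\perp$, or merely meets $\ell_0^\perp$ in a single point-- should identify the family of lines produced with $\{\ell'\in\cM:\ell'\cap P_{2,b}\neq\emptyset\}$, i.e.\ the fiber of $\cM_2$ over $b$, yielding the first assertion. For the universal family I would observe that $\Sc_\el$ is obtained from $\Sc_{\el[1]}$ by attaching a family of $\gamma_4$-curves varying the point $p'\in\ell'$; this matches the $\P^1$-fiber of $\DF_4(1,3,4)\to\DF_4(1,3)$ from Remark \ref{rem:isolines}, and $\pi_2$ keeps the pair $(p',\ell')$, so $\pi_2(\Sc_\el)=\cU_2$.

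The main technical obstacle I anticipate is the case analysis above: the raw condition coming out of the tower of unions asks for the existence of intermediate isotropic $\ell$ and $\pi$ simultaneously satisfying all the incidence constraints with $\ell_0$ inherited from the previous Schubert strata, and one must verify that this collapses to the clean condition $\ell'\cap P_{2,b}\neq\emptyset$. Because the ambient symplectic space is $6$-dimensional and $\ell_0$ is fixed, however, the relevant dimension counts are sharply constrained, and the verification reduces to a short and explicit finite case analysis.
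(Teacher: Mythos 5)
Your plan is viable in outline, but it takes a genuinely different route from the paper, and the step where all the content lies is only announced, not carried out. The paper does not compute $\Sc_{(243423)}$ at all: it first observes that $\cM_2=p(q^{-1}(P_2))$ and $\cU_2=p^{-1}(\cM_2)$ for the universal family $q:\cU\to P$, $p:\cU\to\cM$ of isotropic lines; the inclusion $\pi_2(\Sc_{\el[1]})\subseteq q^{-1}(P_2)$ then comes for free from Lemma \ref{lem:M1}, since $q(\pi_2(\Sc_{\el[1]}))=\pi_{23}(\Sc_{\el[1]})=\pi_{23}(\Sc_{\el[3]})=P_2$; and equality is forced by a dimension count: $\dim q^{-1}(P_2)=5+3=8=\dim Z_{\el[1]}$, and $\pi_2$ cannot drop the dimension of $\Sc_{\el[1]}$ because the word $(2,1,2,4,3,4,2,3,2)$ obtained by appending a $2$ to $\el[1]$ is still reduced. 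That last observation is exactly the device that replaces your ``short and explicit finite case analysis.''

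The gap in your proposal is therefore concrete: the assertion that the tower of incidence conditions defining $\Sc_{(243423)}$ collapses, after applying $\pi_{24}$, to the single condition $\ell'\cap P_{2,b}\neq\emptyset$ is the entire statement to be proved, and you defer it. Moreover, even within your framework the natural way to close it is not a pointwise case analysis but the same two ingredients the paper uses: the easy inclusion (every line in the image meets $P_{2,b}$, because $\pi_{23}(\Sc_{\el[1]})=P_2$ already gives a point of $P_2$ on each such line) plus a comparison of dimensions, since a stratum-by-stratum verification of the reverse inclusion would in any case require irreducibility and dimension arguments to rule out that the image is a proper closed subset of $\{\ell'\in\cM\,:\,\ell'\cap P_{2,b}\neq\emptyset\}$. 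Your treatment of the second assertion (sweeping out the $\gamma_4$-curves to pass from $\Sc_{\el[1]}$ to $\Sc_{\el}$ and identifying $\pi_2(\Sc_\el)$ with $p^{-1}(\cM_2)$) does agree with the paper, which likewise derives it from the first assertion.
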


\begin{proof}
As in the previous lemma, the second part follows from the first. 
Consider the family of isotropic lines in $P$: 
$$
\xymatrix@C=15mm{\cM&\cU\ar[r]^{\pi_{134,3}}\ar[l]_{\pi_{134,4}}&P}
$$
For simplicity, let us set here $p:=\pi_{134,4}$, and $q:=\pi_{134,3}$. The variety $\cM_2$ and its universal family $\cU_2$ can be seen as subvarieties of $\cM\subset\DF_4(1,3)$ and of $\cU\subset\DF_4(1,3,4)$, respectively, in the following way: 
$$\cM_2=p(q^{-1}(P_2)), \quad \cU_2=p^{-1}(\cM_2).$$
From the previous Lemma, 
we have $q(\pi_2(\Sc_{\el[1]}))=\pi_{23}(\Sc_{\el[1]})=\pi_{23}(\Sc_{\el[3]})$, which is equal to  $P_2$, hence   
$\pi_2(\Sc_{\el[1]}
) \subseteq q^{-1}(P_2))$.
In order to conclude the proof, it suffices to show that equality holds, for which it is enough to check that $\dim \pi_2(\Sc_{\el[1]})=\dim q^{-1}(P_2))=8$. Now, since $\el[1]$ is a reduced word, $ \dim \Sc_{\el[1]}=\dim Z_{\el[1]}  = 8$; if $\dim \pi_2(\Sc_{\el[1]})$ were smaller than $\dim \Sc_{\el[1]}$, then the word $(2,1,2,4,3,4,2,3,2)$, obtained by adding an index $2$ to $\el[1]$, would not be reduced, and one can easily compute that this is not the case (see Remark \ref{rem:length}). 
\end{proof}

\subsection{Automorphisms of $\bm{\cE}$}\label{ssec:automor}
We will study here the automorphisms of the vector bundle $$
\cE\cong\big(\cO_{B}(1)\otimes W^\vee\big)\oplus T_{B}(-1)\oplus\big(\cO_{B}\otimes W\big).
$$  
 
We start by noting that, since $$H^0(B,T_B(-2))=H^0(B,\cO_B(-1))=H^0(B,\Omega_B(1))=0,$$ any automorphism of $\cE$ preserves the filtration $\cF_2^\vee(1)\hookrightarrow \cF_1^\vee(1)\hookrightarrow\cE$, and the quotients $\cE\lra \cF_2\lra \cF_1$, so that the corresponding projectivity $\varphi:P\to P$ satisfies $\varphi(P_i)=P_i$, $i=1,2$. Then we have:

\begin{lemma}\label{lem:automatrix}
Every automorphism of $\cE$ is determined by a block matrix:
\begin{equation}
g=\left(\begin{array}{c|c|c}A&h&C\\\hline 0&\alpha&h'\\\hline 0&0&A'\end{array}\right),
\label{eq:automor}
\end{equation}
where:
$$\begin{array}{l}
A \in \Aut(\cO_{B}(1)\otimes W^\vee),\quad\alpha\in\Aut(T_B(-1))\cong\C^*,\quad A' \in \Aut(\cO_{B}\otimes W),\\
 h\in \Hom(T_B(-1),\cO_{B}(1)\otimes W^\vee),\quad h'\in \Hom(\cO_B\otimes W,T_B(-1)),\\ C\in \Hom(\cO_B\otimes W,\cO_B(1)\otimes W^\vee).
 \end{array} 
$$ 
\end{lemma}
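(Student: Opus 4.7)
The plan is to use the direct sum decomposition of $\cE$ from Lemma~\ref{lem:decomp} in order to write any endomorphism as a $3 \times 3$ block matrix with respect to the three summands $U_1 = \cO_B(1)\otimes W^\vee$, $U_2 = T_B(-1)$, $U_3 = \cO_B\otimes W$, and then to use the vanishings already collected before the statement of the lemma to kill the three strictly lower-triangular entries. Concretely, for any $g \in \End(\cE)$, writing $g = (g_{ij})$ with $g_{ij}: U_j \to U_i$, the three entries below the diagonal lie in
$$
\Hom(U_1,U_2)= H^0(T_B(-2))\otimes \End(W^\vee,W)^*, \quad \Hom(U_1,U_3)= H^0(\cO_B(-1))\otimes \Hom(W^\vee,W), \quad \Hom(U_2,U_3)= H^0(\Omega_B(1))\otimes W,
$$
each of which vanishes. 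This is precisely the content of the filtration-preservation statement sketched in the preamble, now read off from the splitting.

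Next, I would identify the remaining blocks. For the diagonal, the computations $\End(U_1)=\End(W^\vee)$, $\End(U_3)=\End(W)$, and $\End(U_2)=H^0(T_B\otimes\Omega_B)=\C$ (the last equality being the fact that $T_{\P^2}$ is simple, a standard consequence of the Euler sequence, or of Schur's lemma applied to the irreducible homogeneous bundle $T_{\P^2}(-1)$) give the matrices $A$, $A'$ and the scalar $\alpha$, respectively. For the strictly upper-triangular blocks, the Hom spaces
$$
\Hom(U_2,U_1)=H^0(\Omega_B(2))\otimes W^\vee, \quad \Hom(U_3,U_1)=H^0(\cO_B(1))\otimes \Hom(W,W^\vee), \quad \Hom(U_3,U_2)=H^0(T_B(-1))\otimes W^\vee,
$$
are then the natural homes of $h$, $C$, and $h'$, matching the statement. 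Finally, requiring $g$ to be an automorphism amounts to asking that its induced maps on the graded pieces of the filtration $0 \subset U_1 \subset U_1\oplus U_2 \subset \cE$ be isomorphisms, i.e.\ that $A,A'$ be invertible and $\alpha \neq 0$.

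I do not foresee any real obstacle: once Lemma~\ref{lem:decomp} has been established, the argument is a routine bookkeeping exercise combining the three cohomology vanishings, the simplicity of $T_{\P^2}(-1)$, and the explicit splitting. The only point requiring minor care is that the identification of the individual blocks with the listed Hom spaces is independent of the choice of a splitting; but this follows automatically because the same three vanishings force the filtration, and hence its graded pieces, to be canonically preserved.
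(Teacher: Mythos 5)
Your argument is correct and is essentially the paper's own: the three vanishings $H^0(T_B(-2))=H^0(\cO_B(-1))=H^0(\Omega_B(1))=0$ kill the strictly lower-triangular blocks with respect to the splitting of Lemma \ref{lem:decomp}, the simplicity of $T_{\P^2}(-1)$ gives $\alpha\in\C^*$, and invertibility is read off from the graded pieces of the induced filtration. (Only a cosmetic slip: the coefficient space of $\Hom(U_1,U_2)$ should be $H^0(T_B(-2))\otimes W$ rather than your "$\End(W^\vee,W)^*$", but the vanishing is unaffected.)
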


\begin{remark}\label{rem:Euler} The entries of the above block matrix may be understood in terms of tensors. In fact, considering $B$ as the Grothendieck projectivization of 
\begin{equation}
V:= H^0(B,\cO_B(1)),
\label{eq:V}
\end{equation}
 and noting that $W=H^0(r_1,{H_4}_{|r_1})$, the Euler sequence on $B$
$$
\shse{\cO_B(-1)}{\cO_B\otimes V^\vee}{T_B(-1)} 
$$
provides the following natural isomorphisms:
$$
\begin{array}{l}
\Aut(\cO_{B}(1)\otimes W^\vee) \cong\GL(W^\vee),\\
\Hom(T_B(-1),\cO_{B}(1)\otimes W^\vee)\cong 
\bigwedge^2 V\otimes W^\vee\subset V\otimes V\otimes W^\vee,\\
\Aut(\cO_{B}\otimes W) \cong\GL(W),\\
\Aut(T_B(-1))\cong\C^*
\subset\GL(V^\vee) \mbox{ (homotheties)},\\
\Hom(\cO_B\otimes W,T_B(-1))\cong V^\vee\otimes W^\vee,\\
\Hom(\cO_B\otimes W,\cO_B(1)\otimes W^\vee)\cong \Hom(W, W^\vee)\otimes V.
\end{array}
$$
We are denoting here by $\bigwedge^2 V\subset V\otimes V$ the vector subspace of skew-symmetric tensors, that is the kernel of the natural map $V\otimes V\to S^2V$. 
\end{remark}

In a similar way, we have a block matrix expression for the skew--symmetric isomorphism $\eta:\cE\to\cE^\vee(1)$. Note that the natural isomorphism (induced by a skew--symmetric form) $J: V^\vee\lra\bigwedge^2V$ induces 
an isomorphism :
$$
J:T_B(-1)\lra \Omega_B(2),
$$
that we keep denoting by $J$. Then we may write:
\begin{lemma}\label{lem:skewmatrix}
The skew--symmetric form $\eta$ is determined by a block matrix:
\begin{equation}
\eta=\left(\begin{array}{c|c|c}0&0&S\\\hline 0&\beta J&-w^t\\\hline -S^t&w&T\end{array}\right),
\label{eq:eta}
\end{equation}
where:
$$
\begin{array}{l}
S\in \Aut(\cO_{B}\otimes W) ,\quad\beta\in\C^*,\\
 w\in \Hom(T_B(-1),\cO_{B}(1)\otimes W^\vee),\\ T\in \Hom(\cO_B\otimes W,\cO_B(1)\otimes W^\vee) \mbox{ (skew--symmetric)}.
 \end{array}
$$
\end{lemma}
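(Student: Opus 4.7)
The plan is to compute $\eta$ block-by-block in the decomposition $\cE\cong E_1\oplus E_2\oplus E_3$ provided by Lemma \ref{lem:decomp}, where $E_1=\cO_B(1)\otimes W^\vee$, $E_2=T_B(-1)$, $E_3=\cO_B\otimes W$. Since $\cE^\vee(1)\cong E_3\oplus\Omega_B(2)\oplus E_1$, each entry $\eta_{ij}$ lies in a specific $\Hom$-group. My first step is to rule out three blocks by cohomological vanishing on $B\simeq\P^2$: the block $\eta_{11}\in\Hom(E_1,E_3)$ involves the factor $H^0(\cO_B(-1))=0$, and both $\eta_{12}$ and $\eta_{21}$ reduce to groups containing the factor $H^0(\Omega_B(1))=0$. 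This produces the zero entries in the upper-left $2\times 2$ corner of the matrix in the statement.

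Next, I would impose skew-symmetry $\eta^t=-\eta$, after the natural identification $(\cE^\vee(1))^\vee(1)\cong\cE$. This forces the lower-left blocks to be determined by the upper-right ones: setting $S:=\eta_{13}\in\Hom(\cO_B\otimes W,\cO_B\otimes W)$ and $w:=\eta_{32}\in\Hom(T_B(-1),\cO_B(1)\otimes W^\vee)$ immediately yields $\eta_{31}=-S^t$ and $\eta_{23}=-w^t$. For the central block, the space $\Hom(T_B(-1),\Omega_B(2))$ is one-dimensional: using the canonical isomorphism $J:T_B(-1)\to\Omega_B(2)$ together with the simplicity of $T_{\P^2}(-1)$, every such map is a scalar multiple of $J$, so $\eta_{22}=\beta J$ for some $\beta\in\C$. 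Finally $\eta_{33}=:T\in\Hom(\cO_B\otimes W,\cO_B(1)\otimes W^\vee)$ is constrained only by the requirement that it represent a skew bilinear form on $\cO_B\otimes W$, which is precisely the condition in the statement.

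The last step is to use the nondegeneracy of $\eta$ to upgrade $S\in\End(\cO_B\otimes W)$ to $S\in\Aut(\cO_B\otimes W)$ and $\beta\in\C$ to $\beta\in\C^*$. Expanding the determinant of the block matrix along the anti-diagonal pattern yields, up to a nonzero factor coming from $\det J$, an expression proportional to $\beta^{\rank E_2}\cdot(\det S)^2$; this is nonzero precisely when $\beta\neq 0$ and $S$ is invertible, so both conditions are forced by the fact that $\eta$ is an isomorphism. The main subtle point I foresee is checking that $J$, viewed as a bilinear form on $E_2$, is genuinely skew rather than symmetric; otherwise the skew-symmetry constraint would force $\beta=0$ and the claimed shape would collapse. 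This amounts to tracing the definition of $J$ through its construction from the canonical isomorphism $V^\vee\simeq\wedge^2 V$ induced by a trivialization of $\det V$; the remaining ingredients are elementary cohomology on $\P^2$ and formal transpose manipulations.
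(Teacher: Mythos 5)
Your argument is correct and coincides with the paper's (largely implicit) proof: the lemma is stated there without a separate argument, resting on the same cohomological vanishings on $B\cong\P^2$ recorded just before Lemma \ref{lem:automatrix} ($H^0(\cO_B(-1))=H^0(\Omega_B(1))=0$ kill the three upper-left blocks) together with formal skew-symmetry, which is exactly what you spell out. Your additional observations --- that $\Hom(T_B(-1),\Omega_B(2))$ is one-dimensional and spanned by the genuinely skew map $J$, and that nondegeneracy of $\eta$ forces $\det S\neq 0$ and $\beta\neq 0$ via the block-antidiagonal determinant --- are accurate and complete the details the paper leaves to the reader.
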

In the above lemma the index $t$ (``transposition'') on a morphism of sheaves denotes the twist with $\cO_B(1)$ of the dual morphism. We say that the morphism $T\in \Hom(\cO_B\otimes W,\cO_B(1)\otimes W^\vee)$ is skew--symmetric if considering $T$ as an element of $\Hom(W, W^\vee)\otimes V$ (see Remark \ref{rem:Euler}), it belongs to $\bigwedge^2W^\vee\otimes V$.

We end this section by using automorphisms of $\cE$ to get a convenient expression of the skew--symmetric form $\eta$. 

\begin{lemma}\label{lem:normalform}
There exists an automorphism $\varphi$ of $\cE$ such that, denoting by $I$ the identity in $\Aut(\cO_{B}\otimes W)$, the block matrix associated to $\varphi^t\circ\eta\circ\varphi$ is the following:
\begin{equation}
{\left(\begin{array}{c|c|c}0&0&I\\\hline 0&J&0\\\hline -I&0&0\end{array}\right)}.
\label{eq:normaleta}
\end{equation}
\end{lemma}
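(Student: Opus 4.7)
The plan is to run a two-step symplectic Gram--Schmidt on the block matrix of $\eta$ from Lemma \ref{lem:skewmatrix}, using the automorphisms of $\cE$ classified in Lemma \ref{lem:automatrix}. Concretely, I would exhibit $\varphi$ as a composition $\varphi_1\varphi_2$, where $\varphi_1$ is block-diagonal and normalizes the nonzero diagonal blocks of $\eta$, and $\varphi_2$ is upper unitriangular and kills the remaining off-diagonal blocks $w$ and $T$.

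For the first step, I would take $\varphi_1=\diag(A,\alpha,A')$ with $A\in\GL(W^\vee)$, $\alpha\in\C^*$, and $A'\in\GL(W)$. Under the congruence $\eta\mapsto\varphi_1^t\eta\,\varphi_1$, the $(1,3)$ block $S$ transforms to $A^t S A'$ and the $(2,2)$ block $\beta J$ to $\alpha^2\beta J$. Choosing $A=I$, $A'=S^{-1}$, and $\alpha$ a square root of $\beta^{-1}$ (which exists over $\C$) puts these blocks into the desired forms $I$ and $J$ respectively, and skew-symmetry of the form forces the $(3,1)$ block to become $-I$. The off-diagonal blocks $w$ and $T$ are simply relabeled after this step.

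For the second step, I would use $\varphi_2$ upper unitriangular with parameters
$$h\in\Hom(T_B(-1),\cO_B(1)\otimes W^\vee),\quad h'\in\Hom(\cO_B\otimes W,T_B(-1)),\quad C\in\Hom(\cO_B\otimes W,\cO_B(1)\otimes W^\vee).$$
A direct block calculation of $\varphi_2^t\eta_1\varphi_2$ shows that the $(2,3)$ block becomes $h^t+Jh'-w^t$ and the $(3,3)$ block becomes $T+(C^t-C)+(\text{terms in }h,h')$. Choosing $h'=0$ and $h=w$ kills the $(2,3)$ block (and, by the preserved skew-symmetry, automatically the $(3,2)$ block), and then choosing $C$ to be any element of $\Hom(\cO_B\otimes W,\cO_B(1)\otimes W^\vee)$ satisfying $C-C^t=T$ kills the $(3,3)$ block. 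Such a $C$ exists because $T\in\bigwedge^2 W^\vee\otimes V$ by Lemma \ref{lem:skewmatrix}, and the antisymmetrization map $W^\vee\otimes W^\vee\otimes V\to\bigwedge^2 W^\vee\otimes V$ is surjective (one may simply take $C=T/2$).

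I expect no serious obstacle: this is just the standard reduction of a non-degenerate skew form adapted to the isotropic filtration $\cF_1^\vee(1)\subset\cF_2^\vee(1)\subset\cE$ provided by Remark \ref{rem:isoP1P2}. The only point requiring care is checking that the scalars/morphisms $A,A',\alpha,h,h',C$ chosen above actually live in the correct global Hom bundles on $B=\P^2$; but each of them lies in exactly the same Hom space as the corresponding block of $\eta$ that it is meant to cancel, as identified in Remark \ref{rem:Euler}, so no cohomological obstruction arises and the construction is global over $B$.
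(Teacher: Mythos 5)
Your proof is correct and follows essentially the same route as the paper: a congruence by an upper-triangular automorphism whose diagonal blocks $(I,\sqrt{\beta}^{\,-1},S^{-1})$ normalize $S$ and $\beta J$, and whose off-diagonal entries cancel $w$ and $T$ (using that $T$ is skew-symmetric to solve $C-C^t=T$). The paper merely packages your two steps into a single matrix identity, exhibiting $\psi=\varphi^{-1}$ explicitly and verifying $\eta=\psi^t\eta_0\psi$ with $U$ chosen so that $U^tS-S^tU=T$.
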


\begin{proof}
Let us consider for $\eta$ the expression provided in (\ref{eq:eta}). 
If $U\in \Hom(\cO_B\otimes W,\cO_B(1)\otimes W^\vee)$ is any morphism satisfying that $U^tS-S^tU=T$ (writing $S,T$ and $U$ as tensors, by means of Remark \ref{rem:Euler}, one can easily show that it exists), we can write the matrix $\eta$ as:
$$
\eta=\left(\begin{array}{c|c|c}I&-(S^t)^{-1}w&U\\\hline 0&\sqrt{\beta}&0\\\hline 0&0&S\end{array}\right)^{\hspace{-0.1cm}t}
{\left(\begin{array}{c|c|c}0&0&I\\\hline 0&J&0\\\hline -I&0&0\end{array}\right)}
\left(\begin{array}{c|c|c}I&-(S^t)^{-1}w&U\\\hline 0&\sqrt{\beta}&0\\\hline 0&0&S\end{array}\right),
$$
where $\sqrt{\beta}$ denotes the homothety of $T_B(-1)$ of ratio $\sqrt{\beta}\in\C^*$.
\end{proof}


\section{Proof of the main theorem}\label{sec:proof}

 In Remark \ref{rem:admisext} we have defined a subset $E^+\subset \P((H^1(Z_{\el},e^{K_2}))^\vee)$, which parametrizes the possible Bott--Samelson varieties of $\DF_4(D)$ defined by a $1$-cocycle in $H^1(Z_{\el},e^{K_2})$. The idea of our proof is to show that each of them can be constructed by means of a suitable automorphism ${g}\in\Aut(\cE)$. The Bott--Samelson varieties constructed in this way will be all isomorphic as varieties, although not as $\P^1$-bundles over $Z_{\el}$; we may then conclude that $Z_{\el_0[14]}$ is flag--compatible and, by means of Proposition \ref{prop:main}, we conclude the proof of Theorem \ref{thm:main}.

More concretely, we will consider the following subgroup of automorphisms of $\cE$, that is particularly suitable for our purpose:
\begin{equation}
G=\left\{\left.\alpha\left(\begin{array}{c|c|c}I&0&C\\\hline 0&1&0\\\hline 0&0&I\end{array}\right)\right|\,\,\alpha\in\C^*,\,\,\,C\in \Hom(\cO_B\otimes W,\cO_B(1)\otimes W)\right\}.
\label{eq:G}
\end{equation}

\begin{remark}\label{rem:skewG}
By means of Lemma \ref{lem:normalform}, we will assume, without loss of generality, that the block skew--symmetric form $\eta$ takes the form (\ref{eq:normaleta}). Then we may consider the orbit $G\eta$ of $\eta$ by the action  of $G$ on $\Hom(\cE,\cE^\vee(1))$. Given an element $g\in G$, determined by a pair $(\alpha,C)$, it sends $\eta$ to the skew--symmetric form $g(\eta):=g^t\circ\eta\circ g\in G\eta$ given by the block matrix: 
\begin{equation}
g(\eta)=\alpha^2\left(\begin{array}{c|c|c}0&0&I\\\hline 0&J&0\\\hline -I&0&C^t-C\end{array}\right). 
\label{eq:geta}
\end{equation}
\end{remark}

\subsection{Step I: Constructing a morphism $\bm{\psi:G\to E^+}$.} In this section we will consider the Bott--Samelson variety $Z_{\el_0[14]}$ of the complete flag $\DF_4(D)$, and show how to use the elements of the group $G$ to produce deformations of the $\P^1$-bundle $Z_{\el_0[14]}\to Z_{\el_0[15]}=Z_{\el}$.

Set  $\ol{\cU}:=\pi_{23}^{-1}(P)$ and recall that $\cU=\pi_{2}(\pi_{23}^{-1}(P))$, so that we have Cartesian squares:  
$$
\xymatrix@R=30pt{\ol{\cU}\ar[r]\ar@{_{(}->}+<0ex,-2ex>;[d]&\cU\ar[r]\ar@{_{(}->}+<0ex,-2ex>;[d]&P\ar[r]\ar@{_{(}->}+<0ex,-2ex>;[d]&B\ar@{_{(}->}+<0ex,-2ex>;[d]\\
\DF_4(D)\ar[r]&\DF_4(1,3,4)\ar[r]&\DF_4(1,4)\ar[r]&\DF_4(1)}
$$
The variety $\ol{\cU}$  
can be regarded as a subvariety of the family $\ol{F}$ of vertical flags of points, lines and planes in $P$. 
Denoting by $\mu$ the action of $G$ on $P$, we have an induced action of $G$ on $\ol{F}$, and a commutative diagram:
$$
\xymatrix@R=30pt{{\mu}^*\ol{\cU}\ar@{^{(}->}+<+3.5ex,0ex>;[r]\ar[d]&G\times\ol{F}\ar[r]\ar[d]^{\mu}&G\times P\ar[d]^{\mu}\\
\ol{\cU}\ar@{^{(}->}+<+2ex,0ex>;[r]&\ol{F}\ar[r]&P}
$$

\begin{lemma}\label{lem:gpG}
The image of the morphism $\id\times f_{\el}:G\times Z_{\el}\to G\times \ol{\cU}\subset G\times \ol{F}$ lies in $\mu^*\ol{\cU}$, so that we have a commutative diagram:
$$
\xymatrix@R=30pt{&G\times Z_{\el}\ar[d]^{\id\times f_{\el}}\ar[ld]\\{\mu}^*\ol{\cU}\ar@{^{(}->}+<+3.5ex,0ex>;[r]&G\times\ol{F}}
$$
\end{lemma}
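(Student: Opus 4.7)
The strategy is to check the desired containment on points: for every $(g,z) \in G \times Z_\el$, the pair $(g, f_\el(z))$ lies in $\mu^*\ol{\cU}$ precisely when $g \cdot f_\el(z) \in \ol{\cU}$, i.e., when the $G$-translate of the flag $\phi := f_\el(z) \in \Sc_\el \subset \ol{\cU}$ remains $\eta$-isotropic. Since $G$ acts fiberwise on $P \to B$, this is a condition on vertical flags $(p_0, \ell_0, \pi_0)$; by the standard substitution rule, $g \cdot \pi_0$ is $\eta$-isotropic if and only if $\pi_0$ is $g(\eta)$-isotropic, so it suffices to show that every plane $\pi_0$ appearing in flags of $\Sc_\el$ is simultaneously isotropic for all forms in the orbit $G\cdot\eta$.

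Assuming (via Lemma \ref{lem:normalform}) that $\eta$ is in the normal form (\ref{eq:normaleta}), Remark \ref{rem:skewG} presents $g(\eta)$ as $\alpha^2\eta$ plus a correction supported on $\cF_1 \otimes \cF_1$ given by the skew-symmetric form $C^t - C$. Because $\Sc_\el \subset \ol{\cU}$ already provides $\eta$-isotropy, only the correction matters: it must vanish on $\pi_0$. As $C$ varies, $C^t - C$ exhausts the skew-symmetric forms on the rank-two bundle $\cF_1$, so the condition is equivalent to the projection $\pi_0 \to \cF_1$ induced by $\cE \to \cF_1$ having rank at most one. Using the identification $\ker(\cE \to \cF_1) = \cF_2^\vee(1)$ from the decomposition of Lemma \ref{lem:decomp}, a direct dimension count translates this linear condition into the geometric statement $\pi_0 \cap P_1 \neq \emptyset$.

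The remaining task is then to verify $\pi_0 \cap P_1 \neq \emptyset$ for every flag in $\Sc_\el$. By Lemma \ref{lem:M1}, every flag in $\Sc_{\el[4]} = \Sc_{(21243)}$ has $\ell_0 \in \cM_1$, so $\ell_0 \cap P_1 \neq \emptyset$, and hence $\pi_0 \supset \ell_0$ meets $P_1$. Extending $\Sc_{\el[4]}$ up to $\Sc_\el$ consists of four Bott--Samelson steps adding the letters $l_6 = 4$, $l_7 = 2$, $l_8 = 3$, $l_9 = 4$; by the descriptions of $\gamma_2, \gamma_3, \gamma_4$ in Section \ref{ssec:vertiso}, each step varies one of $p_0, \pi_0, \ell_0$ while fixing the other two. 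A case-by-case check shows that at every step $\pi_0$ is either unchanged, or replaced by a new isotropic plane still containing a previously fixed $\ell_0$ that meets $P_1$, so the property $\pi_0 \cap P_1 \neq \emptyset$ propagates all the way to $\Sc_\el$.

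I expect the main obstacle to be the bridge step: recognizing $\pi_0 \cap P_1 \neq \emptyset$ as the precise geometric translation of isotropy for every form in $G\cdot\eta$. It requires combining the normal form of $\eta$, the explicit block structure of the elements of $G$ in (\ref{eq:G}), and the identification of $\ker(\cE \to \cF_1)$ as $\cF_2^\vee(1)$ inside the direct sum decomposition of $\cE$. Once this translation is in hand, the propagation through the Bott--Samelson tower is straightforward bookkeeping on the letters of $\el$.
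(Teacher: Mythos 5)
Your proof is correct, and its skeleton coincides with the paper's: reduce the statement to showing that every flag of $\Sc_{\el}$ is isotropic for each form $g(\eta)$ in the orbit, read off from the block matrix (\ref{eq:geta}) that $g(\eta)-\alpha^2\eta$ is a skew form pulled back from the rank-two quotient $\cF_1$ of $\cE$, and then propagate the resulting incidence condition through the recursive construction of $\Sc_{\el}$. The only genuine divergence is in how the invariance criterion is packaged. The paper observes that $g(\eta)(u)$ is proportional to $\eta(u)$ for $u$ in $\cF_2^{\,\vee}(1)$, hence that orthogonals of points of $P_{2}$ are unchanged, and then verifies separately that the lines of $\Sc_{\el}$ (which lie in $\cM_2$, i.e.\ meet $P_{2,b}$) and its planes (which meet $P_{2,b}$ along a line) remain isotropic. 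You instead collapse everything onto the single condition $\pi_0\cap P_1\neq\emptyset$ on the Lagrangian plane, using that a line inside a Lagrangian plane is automatically isotropic; the two criteria are equivalent, since for a Lagrangian $\Lambda$ and the $2$-dimensional isotropic space $U_1$ underlying $P_{1,b}$ one has $\dim(\Lambda\cap U_1)=2-\rho$ and $\dim(\Lambda\cap U_1^{\perp})=3-\rho$, where $\rho$ is the rank of the restriction of the symplectic pairing to $\Lambda\times U_1$. Your version makes the final bookkeeping lighter (every plane of $\Sc_{\el}$ contains a line of $\cM_1$, hence meets $P_1$), at the cost of hiding the statement about lines through $P_{2,b}$, which the paper reuses verbatim in the proof of Lemma \ref{lem:psiinj}. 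If you write this up, make explicit that the rank condition is imposed on the $3$-dimensional linear subspace of $\cE_b$ on which isotropy of $\pi_0$ is actually tested (the kernel of the rank-three quotient of $\cE_b$ defining $\pi_0$ in the convention of Definition \ref{def:isotr}); with that understood, your dimension count translating rank at most one into $\pi_0\cap P_1\neq\emptyset$ is exactly right.
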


\begin{proof}
It is enough to show that $f_\el(Z_\el)$ lies in the fiber of $\mu^*\ol{\cU}$ over $g$, for every $g\in G$. Note that this fiber is precisely the pullback $g^*\ol{\cU}$ of $\ol{\cU}\to P$ via $g:P\to P$. Fix then an element $g\in G$. 

Let us set $P_0:=\pi_{23}(f(Z_{(212)}))\subset P$, which is a section of $P\to B$ (see Lemma \ref{lem:BP}). For every point $b\in B$, we have a flag in $P$ of projective subspaces of the form $P_{0,b}\subset P_{1,b}\subset P_{2,b}\subset P_b$. By the definition of $G$, the projectivities ${g}:P\to P$ associated with elements $g\in G$, preserve these flags.

In particular, for every $b\in B$, ${g}:\ol{F}\to\ol{F}$ preserves the following sets of flags:
\begin{itemize} 
\item $\{P_{0,b}\subset P_{1,b}\subset \pi\,\,|\,\,\pi\subset P_{2,b}\}$,
\item $\{p\subset P_{1,b}\subset \pi\,\,|\,\,\pi\subset P_{2,b}\}$,
\item $\{p\subset \ell\subset \pi\,\,|\,\,p\in P_{1,b},\,\,\ell\in\cM_1,\,\,P_{1,b}\subset\pi\subset P_{2,b}\}$,
\item $\{p'\subset \ell\subset \pi\,\,|\,\,\ell\in\cM_1,\,\,P_{1,b}\subset\pi\subset P_{2,b}\}$ 
\end{itemize}

Recalling the proof of Lemma \ref{lem:M1}, this already tells us that $f_{\el}(Z_{(212434)})$ is contained in ${g}^*\ol{\cU}$. In order to proof that the whole $\Sc_\el=f_{\el}(Z_{\el})$ is contained in ${g}^*\ol{\cU}$, we start by noting that $\Sc_\el$ is constructed by adding to $f_{\el}(Z_{(212434)})$, recursively, the following sets of flags, isotropic with respect to $\eta$:
\begin{itemize} 
\item $\{p' \subset \ell \subset \pi'\,\,|\,\,\ell\in\cM_1\}$, 
\item $\{p' \subset \ell' \subset \pi'\,\,|\,\,p'\in  P_{2,b},\,\,\ell'\in\cM_2,\,\, \dim(\pi'\cap P_{2,b})\geq 1\}$, 
\item $\{p'' \subset \ell' \subset \pi'\,\,|\,\, \ell'\in\cM_2,\,\,\dim(\pi'\cap P_{2,b})\geq 1\}$, 
\end{itemize}
The proof is concluded by showing that these flags are  isotropic also with respect to $g(\eta)$; that is, that the lines $\ell'\in\cM_2$ and the planes $\pi'$ isotropic with respect to $\eta$ meeting $P_{2,b}$ along a line are isotropic with respect to $g(\eta)$.

Note that -- see formula (\ref{eq:geta}) -- ${g}(\eta)$ satisfies that $g(\eta)(u)$ and $\eta(u)$ are proportional, for every $u\in \big(\cO_{B}(1)\otimes W^\vee\big)\oplus T_{B}(-1)$. This is equivalent to say that ${g}(p'^{\perp})={g}(p')^{\perp}$ for every $p'\in P_{2,b}$, and every $b\in B$, that is, that the subspaces orthogonal to points $p\in P_{2,b}$ are the same with respect to $\eta$ and to $g(\eta)$. This obviously implies that the isotropy conditions imposed by $\eta$ and $g(\eta)$ on lines $\ell'$ passing by a point of $P_{2,b}$, are the same (since a line passing by $p'$ is isotropic if and only if it is contained in $p'^\perp$).

Finally, let $\pi'$ be a plane containing  a line $\ell\subset P_{2,b}$. If $\pi'$ is isotropic with respect to $\eta$, then $\ell$ is isotropic with respect to $\eta$, so that $$\ell \subset \pi\subset\ell^\perp, $$ Since $\ell\subset P_{2,b}$, then its orthogonal $\ell^\perp$ with respect to $\eta$ is also its orthogonal with respect to $g(\eta)$, and we conclude that $\pi'$ is isotropic with respect to $g(\eta)$, as well. This concludes the proof.
\end{proof}

\begin{corollary}\label{cor:exists}
There exists a morphism $\psi:G\to E^+$.
\end{corollary}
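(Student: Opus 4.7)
The plan is that each $g \in G$ should produce a Bott--Samelson-type $\P^1$-bundle $W_g \to Z_\el$ whose extension class provides $\psi(g) \in E^+$; the algebraic dependence on $g$ will then follow from the universal property of the universal family of extensions.

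First, I would construct $W_g$ as follows. The form $g(\eta) = g^t\circ\eta\circ g$ is another non-degenerate vertical skew-symmetric isomorphism $\cE \to \cE^\vee(1)$ (Remark \ref{rem:skewG}), hence determines a locus $\ol{\cU}^{g(\eta)} \subset \ol{F}$ of vertical flags in $P$ that are isotropic with respect to $g(\eta)$. Lemma \ref{lem:gpG} is precisely the statement that $\Sc_\el \subset \ol{\cU}^{g(\eta)}$ for every $g \in G$, since $g\cdot v\in\ol{\cU}$ is equivalent to $v$ being isotropic with respect to $g(\eta)$. Repeating the construction of Definition \ref{def:BS} from the same initial point, but using $\ol{\cU}^{g(\eta)}$ in place of $\ol{\cU}$, produces a $\P^1$-bundle $W_g \to Z_\el$ together with a section; by (\ref{eq:F}) this bundle is classified by a class $\theta_g \in H^1(Z_\el, e^{K_2})$, well-defined modulo homotheties. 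Since $W_g$ is an honest Bott--Samelson variety for the complete flag manifold $\DF_4(D)$ equipped with the alternative symplectic structure $g(\eta)$, the splitting argument of \cite[Remark 3.4]{OSWW} guarantees that the restriction of $\theta_g$ to each repeated curve $\gamma_2$ is non-zero; in particular $[\theta_g] \in E^+$.

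To obtain algebraicity of the assignment $g \mapsto [\theta_g]$, I would carry out this construction relatively over $G$. Combining the morphism $\id\times f_\el: G\times Z_\el\to\mu^*\ol{\cU}$ provided by Lemma \ref{lem:gpG} with the relative isotropic locus $\{(g,v) \in G\times\ol{F} \mid v \text{ isotropic w.r.t.\ } g(\eta)\}$---which is algebraic because $g\mapsto g(\eta)$ is---yields a $\P^1$-bundle $\cW\to G\times Z_\el$ with a section, flat over $G$, whose fiber at $g$ recovers $W_g$. This amounts to a family of extensions of $\cO_{Z_\el}$ by $f_\el^*K_2$ parametrized by $G$, and by the universal property of the universal family of extensions (\cite{Lg83}) it is classified by a morphism $\psi: G\to\P((H^1(Z_\el, e^{K_2}))^\vee)$, whose image lies in $E^+$ by the previous paragraph.

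The main delicate point will be verifying that the relative construction indeed produces a $\P^1$-bundle of constant fiber dimension over $G \times Z_\el$, rather than degenerating somewhere. This should follow because each $W_g$ is smooth of the correct dimension, being a step in a Bott--Samelson tower for the non-degenerate vertical symplectic form $g(\eta)$, so flatness in families is automatic.
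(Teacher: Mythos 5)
Your proposal is correct and follows essentially the same route as the paper: your relative isotropic locus $\{(g,v)\mid v \text{ isotropic w.r.t. } g(\eta)\}$ is exactly the paper's $\mu^*\ol{\cU}$, and the paper likewise pulls this back along $\id\times f_\el$ (using Lemma \ref{lem:gpG}), invokes the universal family of extensions of \cite{Lg83} to get algebraicity, and checks nonvanishing on the repeated curves $\gamma_2$ to land in $E^+$. The only imprecision is your phrase ``an honest Bott--Samelson variety for $\DF_4(D)$ equipped with the alternative symplectic structure $g(\eta)$'': the form $g(\eta)$ is defined only on $\cE|_B$ and does not give a global $\DF_4$-structure, but the nonvanishing on $\gamma_2$ still holds (e.g.\ because $\eta$ and $g(\eta)$ impose the same isotropy conditions along the flags through $P_2$, as in the proof of Lemma \ref{lem:gpG}), which is all the paper uses.
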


\begin{proof}
Let us denote by $\cZ$ the pullback of the $\P^1$-bundle $\mu^*\ol{\cU}\to \mu^*{\cU}$ via the composition $f'$ of $\id\times f_\el:G\times Z_\el\to \mu^*\ol{\cU}$ with the natural map $\mu^*\ol{\cU}\to \mu^*{\cU}$, so that we have a Cartesian square, and a section $\sigma:G\times Z_\el\to \cZ$ of the $\P^1$-bundle $\cZ\to G\times Z_\el$:
$$
\xymatrix@=35pt{\cZ\ar[r]\ar[d]&\mu^*\ol{\cU}\ar[d] \\
G\times Z_\el\ar@/^/+<-1.5ex,2ex>;[u]^{\sigma} \ar[ru]_{\id\times f_\el}\ar[r]_{f'}&\mu^*{\cU}}
$$
The $\P^1$-bundle $\cZ\to G\times Z_\el$ is then determined by a cocycle $\theta\in \Ext^1(\cO_{G\times Z_\el},f'^*K_2)$, that can be considered as a family of extensions of $\cO_{Z_\el}$ by $f^*K_2$ parametrized by $G$. Following \cite{Lg83}, 
this family is defined as the pullback of the universal family of extensions  by a certain morphism
$$\psi': G\to H^1(Z_\el,e^{K_2}).$$  
By construction, this morphism assigns to every $g\in G$ a cocycle defining the $\P^1$-bundle obtained by pulling back $\ol{\cU}\to {\cU}$ via the composition of $f_\el:Z_\el\to\ol{\cU}$ with the natural map $\ol{\cU}\to {\cU}$. 
Note that the restriction of this extension to a fiber $\gamma_2$ of $Z_{\el[i-1]}\to Z_{\el[i]}$ ($i=24,22,18$) is nonzero, hence (see Remark \ref{rem:admisext}) $\psi'(G)\subset H^+$. 
Composing with the quotient by homotheties, we get the required morphism $\psi:G\to E^+$.
\end{proof}

\begin{remark}
By construction, for every $g\in G$, the $\P^1$-bundle $Z'_g$ over $Z_\el$ defined by $\psi(g)$ is isomorphic to $Z_{\el_0[14]}$ as a variety (but not necessarily as a $\P^1$-bundle over $Z_\el$). 
\end{remark}

\subsection{Step II: Surjectivity of the map $\bm{\psi}$.}

Let us consider the action of $G$ on $\Hom(\cE,\cE^\vee(1))$ introduced in Remark \ref{rem:skewG}. The orbit $G\eta\subset \Hom(\cE,\cE^\vee(1))$ of $\eta$ is isomorphic to $\C^*\times \big(V\otimes\bigwedge^2 W\big)$ (where $\bigwedge^2 W$ denotes the vector space of anti-symmetric 2-tensors of $W$), and, identifying $G$ with the group $\C^*\times \Hom(\cO_B\otimes W,\cO_B(1)\otimes W)$, the orbit map $G\to G\eta$ is given by 
$$
(\alpha,C)\mapsto (\alpha^2,C^t-C).
$$

\begin{lemma}\label{lem:psifactors}
The morphism $G\to E^+$ factors via 
$V\otimes\bigwedge^2 W\cong V\otimes\C\cong V$.
\end{lemma}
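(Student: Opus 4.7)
The strategy is to show that the cocycle class $\psi(g)\in E^+$ depends only on the projective class $[g(\eta)]\in\P(\Hom(\cE,\cE^\vee(1)))$, and then to read off from the explicit formula (\ref{eq:geta}) that $[g(\eta)]$ is determined entirely by the single datum $C^t-C$.

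For the first step I would unwind the construction of $\cZ_g$ in Corollary \ref{cor:exists}: the $\P^1$-bundle $\cZ_g\to Z_\el$ is obtained by pulling back $\ol{\cU}\to\cU$ along the map $Z_\el\to\cU$ whose image lies in $g^{-1}(\ol{\cU})$ (see Lemma \ref{lem:gpG}). Now $g^{-1}(\ol{\cU})\subset\ol{F}$ is precisely the locus of flags isotropic with respect to $g(\eta)$, and this locus depends only on the projective class $[g(\eta)]$, not on the chosen representative. Combining this observation with the universal property of the family of extensions (\cite{Lg83}) employed in Corollary \ref{cor:exists}, one concludes that $\psi(g_1)=\psi(g_2)$ whenever $[g_1(\eta)]=[g_2(\eta)]$.

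For the second step I would use (\ref{eq:geta}): after clearing the overall scalar $\alpha^2$, a representative of $[g(\eta)]$ has $(1,3)$-block equal to $I$ and $(3,3)$-block equal to $C^t-C$, while the other entries are fixed. Hence $[g(\eta)]$ is completely determined by $C^t-C$. By Remark \ref{rem:Euler}, $C$ lies in $\Hom(W,W^\vee)\otimes V$ and its skew-symmetric part $C^t-C$ belongs to $V\otimes\bigwedge^2 W$, after the canonical identification $\bigwedge^2 W^\vee\cong\bigwedge^2 W$ fixed by a volume form on $W$. The assignment $(\alpha,C)\mapsto C^t-C$ thus defines a morphism $G\to V\otimes\bigwedge^2 W$ through which $\psi$ factors. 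Finally, since $\dim W=2$ we have $\bigwedge^2 W\cong\C$, and therefore $V\otimes\bigwedge^2 W\cong V$. The main obstacle will be making rigorous the first step, namely verifying that the construction of $\cZ_g$ factors through the isotropic locus of $g(\eta)$ in $\ol{F}$ rather than through $g$ itself; once this is done, the rest is an immediate block-matrix computation.
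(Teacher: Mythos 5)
Your proposal is correct and follows essentially the same route as the paper: the paper's proof likewise observes that proportional forms $g_1(\eta)$, $g_2(\eta)$ impose the same isotropy conditions on flags in $P$, hence yield the same bundles $g_i^*\ol{\cU}$ and the same class in $E^+$, so that $\psi$ factors through the orbit modulo homotheties $G\eta/\C^*\cong V\otimes\bigwedge^2 W\cong V$, read off from the orbit map $(\alpha,C)\mapsto(\alpha^2,C^t-C)$. The step you flag as the ``main obstacle'' is exactly the one the paper treats as immediate from the construction of $\psi$ in Corollary \ref{cor:exists}.
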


\begin{proof}
If two elements ${g}_1,{g}_2\in G$ satisfy that ${g}_1(\eta)$ and ${g}_2(\eta)$ are proportional, then they provide the same isotropy condition on flags in $P$, and consequently the same $\P^1$-bundles $g_i^*\ol{\cU}$, $i=1,2$; in other words, we have $\psi({g}_1)=\psi({g}_2)$. This implies that $\psi$ factors via the quotient $G\eta/\C^*$ of the orbit $G\eta$ modulo homotheties, which is isomorphic to $V$.
\end{proof}

The next statement tells us that the induced map from $V$ to $E^+$ is injective:
 
\begin{lemma}\label{lem:psiinj}
Let $g_i\in G$, $i=1,2$ be two automorphisms satisfying that ${g}_1(\eta)$ and ${g}_2(\eta)$ are not proportional.  Then the $\P^1$-bundles $Z'_i$, $i=1,2$ defined by them
$$
\xymatrix{Z'_i\ar[r]\ar[d]_{f'_i}&Z_{\el}\ar[d]^{\pi_2\circ f'_i}\\{g}_i^*\ol{\cU}\ar[r]^{\pi_2}
&{g}_i^*\cU
} 
$$
are not isomorphic as bundles over $Z_\el$.
\end{lemma}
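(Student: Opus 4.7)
The plan is to reduce the non-isomorphism of $Z'_1$ and $Z'_2$ as $\P^1$-bundles over $Z_\el$ to non-proportionality of the associated extension classes in $H^1(Z_\el,e^{K_2})$, and then to detect the latter via an explicit description of the fibers of $Z'_g$ in terms of $g(\eta)$. Each $Z'_i=\P(\cF'_i)$ arises from an extension $0\to e^{K_2}\to\cF'_i\to\cO_{Z_\el}\to 0$ with $\det\cF'_i=e^{K_2}$, so an isomorphism $Z'_1\cong Z'_2$ over $Z_\el$ forces $\cF'_1\cong\cF'_2\otimes L$ with $L^{\otimes 2}\cong\cO_{Z_\el}$; since $Z_\el$ is a smooth simply connected Bott--Samelson variety, $\Pic(Z_\el)$ is torsion-free, $L$ is trivial, and the two extension classes are proportional. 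This reduces the lemma to showing that $g_1(\eta)\not\propto g_2(\eta)$ implies $\psi(g_1)\not\propto\psi(g_2)$ in $H^1(Z_\el,e^{K_2})$.

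The extension class $\psi(g)$ admits a transparent description in terms of $g(\eta)$. Denote by $\ell\subset f_\el^*\cE$ the tautological rank two subsheaf corresponding to the line slot of the vertical flag $f_\el(z)=(p_z,\ell_z,\pi_z)$. The skew form $g(\eta)$ induces an $\cO_{Z_\el}$-linear morphism $g(\eta)_\ell\colon f_\el^*\cE/\ell\to\ell^\vee\otimes f_\el^*H_1$, whose kernel $\ell^{\perp_{g(\eta)}}/\ell$ is a rank two sub-bundle of $f_\el^*\cE/\ell$ and projectivizes to $Z'_g$. By Lemma \ref{lem:gpG} the plane $\pi_z$ in $f_\el(z)$ is $g(\eta)$-isotropic for every $g$, so $\pi/\ell$ is a distinguished line subsheaf inside $\ell^{\perp_{g(\eta)}}/\ell$ that is independent of $g$ and produces the canonical section $\sigma_g$; the resulting short exact sequence $0\to\pi/\ell\to\ell^{\perp_{g(\eta)}}/\ell\to\ell^{\perp_{g(\eta)}}/\pi\to 0$ recovers, after the standard twist, the defining extension $(\ref{eq:F})$ of $\cF'_g$. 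Thus $\psi(g)$ is read off the rank two sub-bundle $\ell^{\perp_{g(\eta)}}/\ell\subset f_\el^*\cE/\ell$ together with the common line sub-bundle $\pi/\ell$.

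The core of the proof is then to show that different elements of the orbit $G\eta/\C^*$ give rise to different rank two sub-bundles, hence to different extension classes. Using the normal form $(\ref{eq:geta})$, a nonzero difference $g_1(\eta)-\lambda g_2(\eta)$ is concentrated in the $(3,3)$-block $\bigwedge^2 W^\vee\otimes V$, which restricts to a nondegenerate skew form on the $(\cO_B\otimes W)$-summand of $\cE$. For this perturbation to preserve $\ell^{\perp_{g(\eta)}}$ at every $z\in Z_\el$ one would need $\ell_z\subset\cF_1^\perp=\cF_2^{\,\vee}(1)$, i.e.\ $\ell_z\subset P_2$, for all $z\in Z_\el$; but by Lemmas \ref{lem:M1} and \ref{lem:famM2} the lines parametrized by $Z_\el$ form the family $\cM_2$ of isotropic lines \emph{meeting} $P_2$, whereas those \emph{contained} in $P_2$ form the strictly smaller sub-family $\cM_1$. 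Hence for generic $z\in Z_\el$ the two sub-sheaves $\ell^{\perp_{g_i(\eta)}}/\ell$ are different sub-bundles of $f_\el^*\cE/\ell$; since they share the common sub-line $\pi/\ell$, a pushout calculation in $\Ext^1$ converts this genuine distinctness of sub-sheaves into non-proportionality of the cocycles $\psi(g_i)$, which finishes the proof.

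The main obstacle is the last pushout step, where one must translate distinctness of two rank two sub-sheaves into non-proportionality of their extension classes (abstractly, two different sub-sheaves might be isomorphic as vector bundles). The crucial rigidification is provided by the common sub-line $\pi/\ell$: any $\lambda$ making $\psi(g_1)=\lambda\psi(g_2)$ would intertwine the two sub-bundles inside $f_\el^*\cE/\ell$ as extensions of $\ell^{\perp_{g_i(\eta)}}/\pi$ by $\pi/\ell$, forcing the sub-bundles themselves to coincide and, consequently, $g_1(\eta)\propto g_2(\eta)$.
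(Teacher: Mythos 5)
Your identification of where $g_1(\eta)$ and $g_2(\eta)$ actually differ is sound and parallels the paper's: after normalizing, the difference sits in the $\bigwedge^2W^\vee\otimes V$ block, so the two forms impose the same isotropy conditions on everything touching $P_2$, and can only be told apart on lines meeting $P_{2,b}$ without being contained in it --- which, by Lemmas \ref{lem:M1} and \ref{lem:famM2}, is exactly what $Z_\el$ sees. The gap is in the final conversion of this into the statement of the lemma. You pass from ``for generic $z$ the sub-bundles $S_i=\ell^{\perp_{g_i(\eta)}}/\ell$ of $f_\el^*\cE/\ell$ are distinct'' to ``the cocycles $\psi(g_1),\psi(g_2)$ are non-proportional'' by asserting that a proportionality would intertwine the two sub-bundles and force them to coincide. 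This does not follow: proportional classes in $\Ext^1$ only produce an \emph{abstract} isomorphism of the two extensions, with no reason for it to be induced by, or to extend to, an automorphism of the ambient $f_\el^*\cE/\ell$. Concretely, setting $Q=(f_\el^*\cE/\ell)/(\pi/\ell)$, the two classes are the images of the inclusions $\iota_i\colon S_i/(\pi/\ell)\hookrightarrow Q$ under the connecting map $\Hom(-,Q)\to\Ext^1(-,\pi/\ell)$, and $\iota_1-\lambda\iota_2$ can be nonzero yet lift to $f_\el^*\cE/\ell$; so distinctness of sub-bundles does not imply non-proportionality of classes. (Your opening reduction also silently needs $\Hom(e^{K_2},\cO_{Z_\el})=0$ so that the sub-line-bundle of $\cF'_i$ is canonical; this holds by the descent rules, but it is not free.)

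The paper closes exactly this gap by a restriction argument your proposal lacks: it produces an explicit curve on which the two bundles are visibly different. Take $b\in B$ where the forms disagree, a general line $r\subset P_b$ isotropic for $g_1(\eta)$ but not for $g_2(\eta)$ (necessarily disjoint from $P_{2,b}$), the line $r'=r^{\perp}\cap P_{2,b}$ (shown to be the same for both forms), a point $q\in r'$, and the pencil $C$ of lines through $q$ inside the plane $q+r$. All members of $C$ meet $P_{2,b}$, so $C$ lies in the image of $Z_\el$ by Lemma \ref{lem:famM2}; moreover $C$ sits in a $\P^3$-fiber of $g_i^*\cU\to P$ over which $g_i^*\ol{\cU}$ is the projectivized null-correlation bundle, and its restriction to $C$ is $\P(\cO_C\oplus\cO_C(2))$ for $i=1$ (the plane $q+r$ is $g_1(\eta)$-isotropic) but $\P(\cO_C(1)\oplus\cO_C(1))$ for $i=2$. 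These are non-isomorphic, so the bundles already differ over a curve --- the genuinely bundle-theoretic (rather than sub-sheaf-theoretic) distinction your argument needs but never establishes.
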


\begin{proof}
If ${g}_1(\eta)$ and ${g}_2(\eta)$ are not proportional, it follows that there exists $b\in B$ such that ${g}_1(\eta)$ and ${g}_2(\eta)$ do not provide the same isotropy condition on the fiber $P_b$. In particular, a general line $r\subset P_b$ isotropic with respect to ${g}_1(\eta)$ is not isotropic with respect to ${g}_2(\eta)$. Note that this line does not meet $P_{2,b}$, since isotropic lines meeting $P_{2,b}$ are the same for both forms.

Let $r'_i$ denote the intersection of $P_{2,b}$ with the subspace orthogonal to $r$ with respect to ${g}_i(\eta)$, for $i=1,2$; in both cases $r'_i$ is a line, since if $r'_i$ had bigger dimension, then it would meet $P_{1,b}$, and consequently $r$ would meet $P_{2,b}$. Note that $r'_i$ is precisely the set of points $p\in P_{2,b}$ satisfying that every line joining $p$ with a point of $r$ is isotropic with respect to ${g}_i(\eta)$, for $i=1,2$. Since isotropic lines meeting $P_{2,b}$ are the same for both forms, it follows that $r'_1=r'_2$; let us denote it simply by $r'$.  

Let us fix a point $q\in r'$, and consider the plane $q+r$, and the family of lines $C$ passing by $q$ in $q+r$. By construction, $C$ is a family of lines isotropic with respect to ${g}_i(\eta)$, $i=1,2$ and there is an injective morphism $C\to{g}_i^*\cU$, sending an element $\ell'\in C$ to $(q,\ell')$ (here we are interpreting ${g}_i^*\cU$ as the universal family of isotropic lines with respect to ${g}_i(\eta)$); abusing notation, we denote by $C\subset {g}_i^*\cU$ its image.  Note that $C$ lies on a fiber of ${g}_i^*\cU\to P$, for each $i$, which is isomorphic to $\P^3$, and that the pullback of ${g}_i^*\ol{\cU}\to{g}_i^*\cU$ to this $\P^3$ is the projectivization of a null correlation bundle (see Section \ref{sec:B2}). 

Moreover, since all the elements of the family $C$ meet $P_{2,b}$, Lemma \ref{lem:famM2} tells us that $C$ belongs to the image of $Z_{\el}$, hence we may now conclude by showing that the pullback of ${g}_i^*\ol{\cU}\to{g}_i^*\cU$ to $C$ is different in the cases $i=1,2$. This is done by noting that the plane $q+r$ is isotropic with respect to ${g}_1(\eta)$, which means that $C\subset {g}_1^*\cU$ is isotropic with respect to the null correlation bundle mentioned above and, in particular, the restriction of the bundle ${g}_1^*\ol{\cU}\to{g}_1^*\cU$ to $C$ is isomorphic to $\P(\cO_C\oplus\cO_C(2))$. On the other hand, $q+r$ is not isotropic with respect to ${g}_2(\eta)$, and so $C$ is a non isotropic line, therefore the restriction of the bundle ${g}_2^*\ol{\cU}\to{g}_2^*\cU$ to $C$ is isomorphic to $\P(\cO_C(1)\oplus\cO_C(1))$.
\end{proof}

We may now achieve the goal of this section:

\begin{corollary}
The map $\psi:G\to E^+$ is surjective.
\end{corollary}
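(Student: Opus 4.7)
The plan is to combine the two preceding lemmata with a dimension count, thereby reducing surjectivity of $\psi$ to an injectivity-implies-surjectivity principle for a self-map of an affine space, in the same spirit as the $\DB_2$ and $\DG_2$ cases treated earlier. By Lemma \ref{lem:psifactors}, $\psi$ factors through $V\otimes\bigwedge^2 W$; since $\dim W=2$, the space $\bigwedge^2 W$ is one-dimensional, so the factorization is through $V=H^0(B,\cO_B(1))$, which is $3$-dimensional because $B\cong\P^2$. By Lemma \ref{lem:psiinj}, the induced morphism $\overline{\psi}\colon V\to E^+$ is injective.

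Next I would invoke Remark \ref{rem:admisext}: the inclusion $E^+\subset E^+(\gamma_2)\cong\C^k$ with $k\le 3$, together with the injectivity of $\overline{\psi}$ from a $3$-dimensional source, forces $k\ge 3$, hence $k=3$ and $E^+(\gamma_2)\cong\C^3$. We therefore have an injective algebraic morphism $\overline{\psi}\colon\C^3\to\C^3$ whose image is contained in $E^+\subset E^+(\gamma_2)=\C^3$.

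The main obstacle is then to pass from injectivity to surjectivity for this self-map of $\C^3$. The cleanest formal route is the Ax--Grothendieck theorem, which guarantees that any injective polynomial self-map of $\C^n$ is bijective. A more natural alternative, and the one I would prefer, is to observe that the construction of $\overline{\psi}$ is manifestly linear: the cocycle produced in Corollary \ref{cor:exists} depends on $g\in G$ only through the antisymmetric block $C^t-C$ appearing in $(\ref{eq:geta})$, and the identification of the orbit $G\eta$ modulo homotheties with $V\otimes\bigwedge^2 W\cong V$ is compatible with this linear dependence. This exhibits $\overline{\psi}$ as a linear injection between $3$-dimensional vector spaces, hence an isomorphism. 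In either case the image of $\overline{\psi}$ fills $E^+(\gamma_2)$, which sandwiches $E^+=E^+(\gamma_2)$ and yields surjectivity of $\psi$.
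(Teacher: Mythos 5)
Your main line of argument---factoring $\psi$ through $V\cong\C^3$ via Lemma \ref{lem:psifactors}, injectivity via Lemma \ref{lem:psiinj}, the dimension count forcing $k=3$, and Ax--Grothendieck to upgrade injectivity to surjectivity---is exactly the paper's proof. However, your ``preferred alternative'' asserting that $\overline{\psi}$ is manifestly linear is not justified and should be dropped: $\psi'$ is obtained from the universal property of the universal family of extensions, which only guarantees it is a morphism of varieties, and the target $E^+(\gamma_2)$ is the complement of a hyperplane in the projective space $\P((H^1(Z_{\el},e^{K_2}))^\vee)$, so its identification with $\C^k$ carries no natural linear structure with respect to which the dependence on $C^t-C$ could be read off as linear without a genuine computation. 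This is precisely why the paper invokes Ax--Grothendieck rather than a linear-algebra argument; stick with that route.
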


\begin{proof}
Let $\gamma_2\subset Z_\el$ be a  fiber of $Z_{\el[i-i]}\to Z_{\el[i]}$, $i=24,22,18$ (see Remark \ref{rem:admisext}), 
and let us consider the composition of  $\psi$  with the inclusion  $E^+\hookrightarrow E^+(\gamma_2)$, which factors via $V\cong\C^3$ by Lemma \ref{lem:psifactors}; this morphism from $V$ to $E^+(\gamma_2)$ is injective by Lemma \ref{lem:psiinj}. Since $E^+(\gamma_2)\cong\C^k$, $k\leq 3$, the Ax--Grothendieck theorem (\cite[Proposition 10.4.11]{Gro2}, \cite{Ax69}) tells us that $k=3$, and that this map is surjective. It follows that $E^+=E^+(\gamma_2)$, and that $\psi$ is surjective, as well. 
\end{proof}


\appendix

\section{Descent tables}\label{app:descent}

This Appendix contains the descent tables for the appearances of the letter $2$ in the word 
$$
\el_0=(2,1,2,4,3,4,2,3,4,2,3,1,2,3,2,1,4,3,2,1,3,4,2,3)
$$
The tables must be read from right to left; a one in the upper part of the table indicates the presence of the first cohomology group of the corresponding divisor at the corresponding step, while a one in the lower part of the table indicates the presence of the zeroth cohomology group of the corresponding divisor at the corresponding step.


\begin{table}[!ht]
\scalebox{0.69}{
\begin{tabular}{|c|}
\hline
 \\\hline\hline
A \\\hline\hline
A'\\\hline
\end{tabular}%
\begin{tabular}{||c||c|c|c|}
\hline
  & 2&1&2\\\hline\hline
{}& 1& 1&\\\hline\hline
 \bf{1}& {}& {}&\\\hline

\end{tabular}
}
\qquad
\scalebox{0.69}{
\begin{tabular}{|c|}
\hline
 \\\hline\hline
A \\\hline
B \\\hline
C \\\hline
D \\\hline
E \\\hline
F \\\hline\hline
A'\\\hline
\end{tabular}%
\begin{tabular}{||c||c|c|c|c|c|c|c|}
\hline
   &2&1&2&4&3&4&2\\\hline\hline
{}& {}& {}& 1& 1& 1& 1& \\\hline
   1& 1& 1& 1& 1& {}&{}&\\\hline
   1& 1& {}& {}& {}& {}& {}&\\\hline
   1& 1& 1& 1& {}& {}& {}&\\\hline
   1& 1& {}& {}& {}& {}& {}&\\\hline
   1& {}& {}& {}& {}& {}&{}&\\\hline\hline
   \bf{1}& 1& 1& {}& {}& {}&{}&\\\hline
\end{tabular}
}
\qquad
\scalebox{0.69}{
\begin{tabular}{|c|}
\hline
 \\\hline\hline
A \\\hline
B \\\hline
D \\\hline\hline
A'\\\hline
B'\\\hline
D'\\\hline
E'\\\hline
F'\\\hline
H'\\\hline
\end{tabular}%
\begin{tabular}{||c||c|c|c|c|c|c|c|c|c|c|}
\hline
   &2&1&2&4&3&4&2&3&4&2\\\hline\hline
   {}& {}& {}& {}& {}& {}& {}& 1& 1& 1& \\\hline
   {}& {}& {}& {}& {}& {}& 1& 1& 1& {}& \\\hline
   {}& {}& {}& {}& {}& 1& 1& {}& {}& {}& \\\hline\hline
    \bf{1}& 1& 1& 1& 1& 1& 1& {}& {}& {}& \\\hline
   {}& {}& {}& 1& 1& {}& {}& {}& {}& {}& \\\hline
    \bf{1}& 1& 1& 1& {}& {}& {}& {}& {}& {}& \\\hline
    \bf{1}& 1& {}& {}& {}& {}& {}& {}& {}& {}& \\\hline
    \bf{1}& {}& {}& {}& {}& {}& {}& {}& {}& {}& \\\hline
   {}& {}& {}& 1& {}& {}& {}& {}& {}& {}& \\\hline
   \end{tabular}
}

\caption{Descent for the groups $H^1(Z_{\el_0[k]},e^{K_2})$, $k=22,18,15$.}
\end{table}


\begin{table}[!ht]
\scalebox{0.69}{
\begin{tabular}{|c|}
\hline
 \\\hline\hline
A \\\hline
B \\\hline
D \\\hline\hline
A'\\\hline
B'\\\hline
D'\\\hline
\end{tabular}%
\begin{tabular}{||c||c|c|c|c|c|c|c|c|c|c|c|c|c|c|c|}
\hline
 &2&1&2&4&3&4&2&3&4&2&3&1&2\\\hline\hline
 {}& {}& {}& {}& {}& {}& {}& {}& {}& {}& 1& 1& 1&\\\hline
   {}& {}& {}& {}& {}& {}& {}& {}& 1& 1& 1& {}& {}&\\\hline
   {}& {}& {}& {}& {}& {}& 1& 1& 1& {}& {}& {}& {}&\\\hline\hline
    \bf{1}& 1& 1& 1& 1& 1& 1& 1& 1& 1& {}& {}& {}&\\\hline
   {}& {}& {}& {}& {}& {}& {}& 1& {}& {}& {}& {}& {}&\\\hline
   {}& {}& {}& {}& {}& 1& {}& {}& {}& {}& {}& {}& {}&\\\hline
 
\end{tabular}
}
\scalebox{0.69}{
\begin{tabular}{|c|}
\hline
 \\\hline\hline
A \\\hline
B \\\hline
C \\\hline\hline
A'\\\hline
B'\\\hline
C'\\\hline
E'\\\hline
F'\\\hline
G'\\\hline
I'\\\hline
\end{tabular}%
\begin{tabular}{||c||c|c|c|c|c|c|c|c|c|c|c|c|c|c|c|}
\hline
   &2&1&2&4&3&4&2&3&4&2&3&1&2&3&2\\\hline\hline
  {}& {}& {}& {}& {}& {}& {}& {}& {}& {}& {}& {}& {}& 1& 1&\\\hline
   {}& {}& {}& {}& {}& {}& {}& {}& {}& {}& {}& 1& 1& 1& {}&\\\hline
   {}& {}& {}& {}& {}& {}& {}& {}& {}& {}& {}& 1& {}& {}& {}&\\\hline\hline
   \bf{1}& 1& 1& 1& 1& 1& 1& 1& 1& 1& 1& 1& 1& {}& {}&\\\hline
   {}& {}& {}& {}& {}& {}& {}& {}& {}& {}& 1& {}& {}& {}& {}&\\\hline
   {}& {}& 1& 1& 1& 1& 1& 1& 1& 1& 1& {}& {}& {}& {}&\\\hline
   {}& {}& 1& 1& 1& 1& 1& 1& {}& {}& {}& {}& {}& {}& {}&\\\hline
   {}& 1& 1& 1& 1& 1& 1& {}& {}& {}& {}& {}& {}& {}& {}&\\\hline
   {}& {}& {}& {}& 1& {}& {}& {}& {}& {}& {}& {}& {}& {}& {}&\\\hline
   {}& {}& 1& 1& 1& 1& 1& 1& 1& {}& {}& {}& {}& {}& {}&\\\hline
\end{tabular}
}
\caption{Descent for the groups $H^1(Z_{\el_0[k]},e^{K_2})$, $k=12,10$.}
\end{table}


\begin{table}[!ht]
\scalebox{0.69}{
\begin{tabular}{|c|}
\hline
 \\\hline\hline
A \\\hline
B \\\hline
C \\\hline
D \\\hline
E \\\hline
F \\\hline
G \\\hline\hline
A'\\\hline
B'\\\hline
C'\\\hline
D'\\\hline
E'\\\hline
F'\\\hline
G'\\\hline
\end{tabular}%
\begin{tabular}{||c||c|c|c|c|c|c|c|c|c|c|c|c|c|c|c|c|c|c|c|}
\hline
   &2&1&2&4&3&4&2&3&4&2&3&1&2&3&2&1&4&3&2\\\hline\hline
  {}& {}& {}& {}& {}& {}& {}& {}& {}& {}& {}& {}& {}& {}& {}& 1& 1& 1& 1&\\\hline
   {}& {}& {}& {}& {}& {}& {}& {}& {}& {}& {}& {}& {}& {}& 1& 1& 1& 1& {}&\\\hline
   {}& {}& {}& {}& {}& {}& {}& {}& {}& {}& {}& {}& {}& {}& 1& 1& {}& {}& {}&\\\hline
   {}& {}& {}& {}& {}& {}& {}& {}& {}& 1& 1& 1& 1& 1& 1& 1& 1& {}& {}&\\\hline
   {}& {}& {}& {}& {}& {}& {}& {}& {}& 1& 1& 1& 1& 1& 1& 1& {}& {}& {}&\\\hline
   {}& {}& {}& {}& {}& {}& {}& {}& {}& 1& 1& 1& 1& 1& 1& {}& {}& {}& {}&\\\hline
   {}& {}& {}& {}& {}& {}& {}& {}& 1& 1& 1& 1& 1& 1& {}& {}& {}& {}& {}&\\\hline\hline
   \bf{1}& 1& 1& 1& 1& 1& 1& 1& 1& 1& 1& 1& 1& 1& 1& {}& {}& {}& {}&\\\hline
   {}& {}& {}& {}& {}& {}& {}& {}& {}& {}& {}& {}& {}& 1& {}& {}& {}& {}& {}&\\\hline
   {}& {}& {}& {}& {}& {}& {}& {}& {}& {}& {}& {}& 1& 1& {}& {}& {}& {}& {}&\\\hline
   {}& {}& {}& {}& {}& {}& {}& {}& 1& {}& {}& {}& {}& {}& {}& {}& {}& {}& {}&\\\hline
   {}& {}& {}& {}& {}& {}& {}& {}& 1& {}& {}& {}& {}& {}& {}& {}& {}& {}& {}&\\\hline
   {}& {}& {}& {}& {}& {}& {}& 1& 1& {}& {}& {}& {}& {}& {}& {}& {}& {}& {}&\\\hline
   {}& {}& {}& {}& {}& {}& 2& 2& {}& 1& 1& {}& {}& {}& {}& {}& {}& {}& {}&\\\hline
\end{tabular}
}
\caption{Descent for the group $H^1(Z_{\el_0[6]},e^{K_2})$.}
\end{table}

\begin{table}[!ht]
\scalebox{0.69}{
\begin{tabular}{|c|}
\hline
 \\\hline\hline
A \\\hline
B \\\hline
C \\\hline\hline
A'\\\hline
B'\\\hline
C'\\\hline
E'\\\hline
F'\\\hline
G'\\\hline
H'\\\hline
I'\\\hline
\end{tabular}%
\begin{tabular}{||c||c|c|c|c|c|c|c|c|c|c|c|c|c|c|c|c|c|c|c|c|c|c|c|}
\hline
   &2&1&2&4&3&4&2&3&4&2&3&1&2&3&2&1&4&3&2&1&3&4&2\\\hline\hline
{}& {}& {}& {}& {}& {}& {}& {}& {}& {}& {}& {}& {}& {}& {}& {}& {}& {}& {}& 1& 1& 1& 1&\\\hline
   {}& {}& {}& {}& {}& {}& {}& {}& {}& {}& {}& {}& {}& {}& {}& {}& {}& {}& 1& 1& 1& {}& {}&\\\hline
   {}& {}& {}& {}& {}& {}& {}& {}& {}& {}& {}& {}& {}& {}& {}& {}& {}& {}& 1& 1& {}& {}& {}&\\\hline
  \hline
   \bf{1}& 1& 1& 1& 1& 1& 1& 1& 1& 1& 1& 1& 1& 1& 1& 1& 1& 1& 1& {}& {}& {}& {}&\\\hline
   {}& {}& {}& {}& {}& {}& {}& {}& {}& {}& {}& {}& {}& {}& {}& 1& 1& 1& {}& {}& {}& {}& {}&\\\hline
   {}& {}& {}& {}& {}& {}& {}& {}& {}& {}& {}& {}& 1& 1& 1& 1& 1& 1& {}& {}& {}& {}& {}&\\\hline
   {}& {}& {}& {}& {}& {}& {}& {}& {}& {}& {}& {}& 1& 1& {}& {}& {}& {}& {}& {}& {}& {}& {}&\\\hline
   {}& {}& {}& {}& {}& {}& {}& {}& {}& {}& 1& 1& 1& {}& {}& {}& {}& {}& {}& {}& {}& {}& {}&\\\hline
   {}& {}& {}& {}& {}& {}& {}& {}& {}& 1& 1& {}& {}& {}& {}& {}& {}& {}& {}& {}& {}& {}& {}&\\\hline
   {}& {}& {}& {}& {}& {}& {}& {}& {}& {}& {}& {}& {}& {}& {}& 1& 1& {}& {}& {}& {}& {}& {}&\\\hline
   {}& {}& {}& {}& {}& {}& {}& {}& {}& {}& {}& {}& 1& 1& 1& 1& 1& {}& {}& {}& {}& {}& {}&\\\hline
\end{tabular}
}
\caption{Descent for the group $H^1(Z_{\el_0[2]},e^{K_2})$.}
\end{table}

\bibliographystyle{plain}
\bibliography{biblio}

\end{document}